\documentclass[11pt, reqno]{amsart}
\usepackage{amssymb, hyperref}
\usepackage{amsthm}
\usepackage{amsmath}
\usepackage{graphicx}
\usepackage[all,2cell]{xy}
\usepackage{verbatim}
\usepackage{tikz}
\usepackage{tikz-cd}
\usepackage{hyperref}
\usepackage{array}
\usepackage[capitalise]{cleveref}
\usepackage{epsfig,graphicx}
\usepackage{tikz}
\usepackage{tikz-cd}
\usepackage{float}
\usepackage{capt-of}
\tikzset{node distance=2cm, auto}
\usepackage[vcentermath]{youngtab}
\usepackage{ytableau}
\usepackage{etoolbox}
\usetikzlibrary{graphs}
\usepackage{listings}
\usetikzlibrary{backgrounds}
\usepackage{multicol}
\usetikzlibrary{arrows,decorations.markings}
\tikzstyle{vertex}=[circle, draw, inner sep=0pt, minimum size=6pt]

\usetikzlibrary{matrix,arrows,decorations.pathmorphing}
\setlength{\textheight}{22cm}
\setlength{\textwidth}{16cm}
\setlength{\oddsidemargin}{0cm}
\setlength{\evensidemargin}{0cm}
\setlength{\topmargin}{0cm}
\usepackage{mathrsfs}
\usepackage{caption}
\numberwithin{equation}{section}
\parskip 0.2cm
\newtheorem*{theorem*}{Theorem}
\newtheorem*{corollary*}{\bf Corollary}
\newtheorem*{remark*}{\bf Remark}
\usepackage{lipsum}
\usepackage{lineno}
\baselineskip 1cm
\newtheorem{theorem}{Theorem}[section]
\newtheorem{corollary}[theorem]{Corollary}
\newtheorem{definition}[theorem]{Definition}
\newtheorem{example}[theorem]{Example}
\newtheorem{lemma}[theorem]{Lemma}

\newtheorem{remark}[theorem]{Remark}

\newcommand{\eat}[1]{}

\title[Torus quotients of Richardson varieties in $G_{r,qr+1}$]
{Torus quotients of Richardson varieties in $G_{r,qr+1}$}
\author{S. Senthamarai Kannan}
\address{%
	S. Senthamarai Kannan\\
	Chennai Mathematical Institute\\
	Plot H1, SIPCOT IT Park\\
	Siruseri, Kelambakkam\\
	603103, India\\
	Email:kannan@cmi.ac.in
}
\author{Arpita Nayek}
\address{%
	Arpita Nayek\\
	IIT Bombay\\
	Powai, Mumbai\\
	400076, India\\
	Email:arpitan@math.iitb.ac.in
}

\begin{document}
	\maketitle
	
	\begin{abstract}
		Let $r$ and $q$ be positive integers and $n=qr+1.$ Let $G = SL(n, \mathbb{C})$ and $T$ be a maximal torus of $G.$ Let $P^{\alpha_r}$ be the maximal parabolic subgroup of $G$ corresponding to the simple root $\alpha_r.$ Let $\omega_r$ be the fundamental weight corresponding to $\alpha_r.$ Let $W$ be the Weyl group of $G$ and $W_{P^{\alpha_r}}$ be the Weyl group of $P^{\alpha_r}.$ Let $W^{P^{\alpha_r}}$ be the set of all minimal coset representatives of $W/W_{P^{\alpha_r}}$ in $W.$ Let $w_{r,n}$ (respectively, $v_{r,n}$) be the minimal (respectively, maximal) element in $W^{P^{\alpha_{r}}}$ such that $w_{r,n}(n\omega_r) \leq 0$ (respectively, $v_{r,n}(n\omega_r) \geq 0$). Let $v \leq v_{r,n}$ and $X^v_{w_{r,n}}$ be the Richardson variety in $G_{r,n}$ corresponding to $v$ and $w_{r,n}.$  In this article, we give a sufficient condition on $v$ such that the GIT quotient of $X^{v}_{w_{r,n}}$ for the action of $T$ is the product of projective spaces with respect to the descent of the line bundle $\mathcal{L}(n\omega_r).$ 
	\end{abstract}
	
	\keywords{Key words: ~GIT-quotient, ~Semistable point, ~Richardson variety.}
	
	\subjclass{2010 Mathematics Subject Classification. 14M15.}
	
	\section{Introduction}
	
	For the action of  $T$ on the Grassmannian $G_{r,n},$ the GIT quotients have been studied extensively. In \cite{HK}, Hausmann and Knutson identified the GIT quotient of the Grassmannian  $G_{2,n}$ by $T$  with the moduli space of polygons in $\mathbb{R}^{3}.$ Also, they showed that GIT quotient of $G_{2,n}$ by $T$ can be realized as the GIT quotient of an $n$-fold product of projective lines by the diagonal action of $PSL(2, \mathbb{C}).$ In the symplectic geometry literature, these spaces are known as polygon spaces as they parameterize the $n$-sides polygons in $\mathbb{R}^{3}$ with fixed edge length up to rotation. More generally, GIT quotient of $G_{r,n}$ by $T$ can be identified with the GIT quotient of $(\mathbb{P}^{r-1})^{n}$ by the diagonal action of $PSL(r, \mathbb{C})$ called the Gel'fand-Macpherson correspondence.
	
	For a parabolic subgroup $Q$ of $G,$ Howard \cite{How} considered the problem of determining which line bundles on $G/Q$ descend to ample line bundles of the GIT quotient of $G/Q$ by $T.$ Howard showed that when $\mathcal{L}(\lambda)$ is a very ample line bundle on $G/Q$ (so the character $\lambda$ of $T$ extends to $Q$ and to no larger subgroup of $G$) and $H^{0}(G/Q,\mathcal{L}(\lambda))^{T}$ is non-zero, the line bundle descends to the quotient \cite{How}. He extended the results to the case when the $T$-linearization of $\mathcal{L}(\lambda)$ is twisted by $\mu,$ a character of $T.$ He proved that the line bundle $\mathcal{L}(\lambda)$ twisted by $\mu$ descends to the GIT quotient provided the $\mu$-weight space of $H^{0}(G/Q, \mathcal{L}(\lambda))$ is non-zero and this is so when $\lambda-\mu$ is in the root lattice and $\mu$ is in the convex hull of the Weyl group orbit of $\lambda.$ This was extended to any simple algebraic group by Kumar \cite{Kum}.   
	
	In \cite{Sko}, Skorobogatov gave a combinatorial description using Hilbert-Mumford criterion, when a point in $G_{r,n}$ is semistable with respect to the $T$-linearized line bundle $\mathcal{L}(\omega_r).$  As a corollary he showed that when $r$ and $n$ are coprime semistability is the same as stability. Independently, in \cite{K1} and \cite{K2}, first named author gave a description of parabolic subgroups $Q$ of a simple algebraic group $G$ for which there exists an ample line bundle $\mathcal{L}$ on $G/Q$ such that $(G/Q)^{ss}_T(\mathcal{L})$ is the same as $(G/Q)^{s}_T(\mathcal{L}).$ In particular, in the case when $G=SL(n,\mathbb{C})$ and $Q=P^{\alpha_{r}},$ Kannan showed that $(G_{r,n})^{ss}_T(\mathcal{L})$ is the same as $(G_{r,n})^{s}_T(\mathcal{L})$ if and only if $r$ and $n$ are coprime.

	In \cite{KS}, first named author and Sardar showed that there exists a unique minimal dimensional Schubert variety $X(w_{r,n})$ in $G_{r,n}$ admitting semistable points with respect to the $T$-lineararized line bundle $\mathcal{L}(\omega_{r}).$ They showed that $w_{r,n}$ is determined by the combinatorial property: minimal length element $w$ in $W^{P^{\alpha_r}}$ such that $w(n\omega_r) \leq 0.$
	
	In order to state the main result of our article we introduce a few notation. 
	
	Note that by using the above combinatorial condition $w_{r,n}=\prod_{i=1}^{r}(s_{iq}s_{iq-1}\cdots s_{i+1}s_{i}).$ In one line notation $w_{r,n}$ is $(q+1,2q+1,3q+1,\ldots,rq+1).$ Let $v_{r,n} \in W^{P^{\alpha_r}}$ be the unique maximal element such that $v_{r,n}(n\omega_r)\geq 0$. Then we see that $v_{r,n}=\prod_{i=2}^{r}(s_{(i-1)q}s_{(i-1)q-1}\cdots s_{i+1}s_{i})$ and in one line notation $v_{r,n}$ is $(1,q+1,2q+1,\ldots,(r-1)q+1).$ Note that $w_{r,n}=cv_{r,n}$ for a Coxeter element $c=\prod_{i=1}^{r}(s_{iq}s_{iq-1}\cdots s_{(i-1)q+2}s_{(i-1)q+1})$ such that $l(w_{r,n})=l(v_{r,n})+(n-1)$. 
	
	For a sequence of integers $1 \leq m_1, m_2, \ldots, m_{r-1} \leq q,$ let $\underline{m}=(m_1, m_2, \ldots, m_{r-1})$ and let
	$v_{\underline{m}} = \prod_{i=1}^{r-1}(s_{(i-1)q+m_{i}}s_{(i-1)q+m_{i}-1}\cdots s_{i+2}s_{i+1}).$ In one line notation $v_{\underline{m}}$ is $(1, m_{1}+1, q+m_{2}+1, \ldots, (r-2)q+m_{r-1}+1).$ Note that $v_{\underline{m}} \leq v_{r,n}.$ By \cite[Proposition 3.1, p.256]{kannanetal}, $ (X^{v_{\underline{m}}}_{w_{r,n}})^{ss}_T(\mathcal{L}(n\omega_r))$ is non-empty.
	
	In this article, we  study the torus quotients of the Richardson varieties $X^{v_{\underline{m}}}_{w_{r,n}}$ with respect to the descent of the ample line bundle $\mathcal{L}(n\omega_r).$ The main theorem of this article is the following:
	\begin{theorem}$($see \cref{theorem8.1}$)$\label{theorem1.1}
		The GIT quotient $T\backslash \backslash (X^{v_{\underline{m}}}_{w_{r,n}})^{ss}_T(\mathcal{L}(n\omega_r))$ is isomorphic to ${\mathbb P}^{q-m_1} \times {\mathbb P}^{q-m_2} \times \cdots \times {\mathbb P}^{q-m_{r-1}}$ and the GIT quotient is embedded via the very ample line bundle $\mathcal{O}_{\mathbb{P}^{q-m_1}}(r-1) \boxtimes \mathcal{O}_{\mathbb{P}^{q-m_2}}(r-2) \boxtimes \cdots \boxtimes \mathcal{O}_{\mathbb{P}^{q-m_{r-1}}}(1)$. 
	\end{theorem}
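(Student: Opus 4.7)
My plan is to identify the graded ring
\[
R \;=\; \bigoplus_{k\geq 0} H^0\bigl(X^{v_{\underline{m}}}_{w_{r,n}},\mathcal{L}(n\omega_r)^{\otimes k}\bigr)^{T}
\]
with the homogeneous coordinate ring of $Y=\prod_{i=1}^{r-1}\mathbb{P}^{q-m_i}$ under the polarization $\mathcal{O}(r-1)\boxtimes\cdots\boxtimes\mathcal{O}(1)$; the theorem then follows on taking $\mathrm{Proj}$. The main tool is standard monomial theory for Richardson varieties in the Grassmannian: a basis of $H^0(X^{v_{\underline{m}}}_{w_{r,n}},\mathcal{L}(kn\omega_r))$ is supplied by semistandard Young tableaux of rectangular shape $r\times kn$ whose columns, regarded as $r$-subsets of $\{1,\ldots,n\}$, lie in the Bruhat interval $[v_{\underline{m}},w_{r,n}]$, and the $T$-invariant subspace is cut out by the content condition that every value in $\{1,\ldots,n\}$ appear exactly $kr$ times.

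I would first unpack the Bruhat bounds: a column $a_1<\cdots<a_r$ satisfies $a_j\leq jq+1$ and $a_j\geq(j-2)q+m_{j-1}+1$ for $j\geq 2$. Hence each value $t\in\{1,\ldots,n\}$ is admissible only in a contiguous interval of rows, and the overlap between the admissible row-$i$ and row-$(i+1)$ ranges is the interval $[(i-1)q+m_i+1,\,iq+1]$ of cardinality $q-m_i+1$. Values outside any overlap must fill a single row in all $kr$ of their occurrences, so the only freedom in a weight-zero tableau is, for each $i\in\{1,\ldots,r-1\}$, how the $kr$ copies of each overlap value distribute between rows $i$ and $i+1$.

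The central combinatorial step is to show that these overlap splittings decouple. Writing $\alpha_i(t)$ for the count of an overlap value $t$ in row $i$, a row-sum calculation gives $\sum_{t}\alpha_i(t)=kr(q-m_i)+ik$; the substitution $n'_t=kr-\alpha_i(t)$ converts this to $\sum n'_t=k(r-i)$ with $n'_t\geq 0$. Thus the data $(n'_t)$ is a composition of $k(r-i)$ into $q-m_i+1$ non-negative parts, equivalently a basis element of $H^0(\mathbb{P}^{q-m_i},\mathcal{O}(k(r-i)))$. I would then verify that every such tuple of compositions, one per $i\in\{1,\ldots,r-1\}$, arises from a unique weight-zero standard tableau and that the semistandard column-strict condition imposes no further constraints beyond the row counts; this matches the Hilbert polynomial of $R$ with $\prod_i\binom{k(r-i)+q-m_i}{q-m_i}$, the Hilbert polynomial of $Y$ under the stated polarization.

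The main obstacle is promoting this bijection of bases to a graded ring isomorphism. I would exhibit, for each $i$, a family of $q-m_i+1$ elementary $T$-invariant standard monomials sitting in degree $r-i$ of $R$ that correspond under the bijection to the single-letter compositions at the $i$-th place, and verify via Pl\"ucker straightening that these generate $R$ subject to precisely the Segre/multi-Veronese relations on the target side. The delicate point is that Grassmannian straightening a priori couples columns throughout the Bruhat interval; one must check that the restriction to $[v_{\underline{m}},w_{r,n}]$ together with $T$-invariance prunes these relations exactly to the ones defining $\bigotimes_i\mathcal{O}(r-i)$ on $Y$. Once this is in hand, the Hilbert-function equality upgrades the natural surjection to the desired isomorphism $R\cong\bigoplus_k H^0(Y,\bigotimes_i\mathcal{O}(k(r-i)))$, and $\mathrm{Proj}$ completes the identification.
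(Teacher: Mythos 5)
Your combinatorial bookkeeping is essentially correct and runs parallel to Section \ref{section3} of the paper: the row-sum identity $\sum_t \alpha_i(t)=kr(q-m_i)+ik$ and the substitution $n'_t=kr-\alpha_i(t)$ reproduce exactly the counts in \cref{lemma3.2}, and the claim that every splitting occurs and determines the tableau is the content of the paper's degree-one bijection (\cref{lemma4.1*}), whose argument does extend to all $k$. The genuine gap is in your final paragraph, and you flag it yourself: nothing in the proposal actually compares the \emph{multiplicative} structure of $R$ with that of the Segre--Veronese ring of $Y=\prod_i\mathbb{P}^{q-m_i}$. Equality of Hilbert functions plus a bijection of monomial bases is far from a graded ring isomorphism, and to even write down a surjection from the coordinate ring of $(Y,\mathcal{O}(r-1)\boxtimes\cdots\boxtimes\mathcal{O}(1))$ onto $R$ you need (a) generation of $R$ in degree one --- the paper devotes \cref{lemma4.1} to this, with a nontrivial tableau argument --- and (b) the fact that the degree-one $T$-invariant standard monomials satisfy precisely the Segre--Veronese relations. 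Step (b) is exactly what your appeal to ``Pl\"ucker straightening pruned by the interval $[v_{\underline{m}},w_{r,n}]$ and $T$-invariance'' would have to deliver, and no mechanism is given; straightening on a Richardson variety couples tableaux across the whole interval, and there is no a priori reason the induced relations are those of $\boxtimes_i\mathcal{O}(r-i)$. There is also a structural slip in your choice of generators: the homogeneous coordinates of the individual factors $\mathbb{P}^{q-m_i}$ are not elements of $R$ (only balanced products, one degree-$(r-i)$ form from each factor, give elements of $R_1$), so there are no ``$q-m_i+1$ elementary $T$-invariant standard monomials in degree $r-i$ of $R$'' corresponding to single-letter data; $R$ is the diagonal Segre--Veronese subring, not the Cox ring of $Y$.

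The paper supplies exactly the missing mechanism, and it is geometric rather than straightening-theoretic: restricting each degree-one $T$-invariant standard monomial to the open Deodhar component $\mathcal{R}^{v_{\underline{m}}}_{w_{r,n}}$ (\cref{lemma3.8} and \cref{lemma6.2}, via \cref{lemma5.1} and \cref{corollary4.5}) shows that it factors as a fixed nowhere-vanishing function $F$ times $\prod_{j}X_{\underline{t}_j}$, where the $X_{(j-1)q+l}$ are explicitly algebraically independent polynomials in the Deodhar coordinates. Up to the common unit $F$, the degree-one invariants thus \emph{are} the monomials of the Segre--Veronese embedding; together with $R_1$-generation this defines $\phi:X^{ss}\to\mathbb{P}(V)$ with image inside $\psi(Z)$, and equality $\phi(X^{ss})=\psi(Z)$ is forced by a dimension count using $X^{ss}=X^{s}$. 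If you want to keep your purely algebraic route, you need a substitute for this step --- for instance an explicit injection of $R$ into a polynomial ring carrying the degree-one basis to the Segre--Veronese monomials --- which is in effect what the Deodhar evaluation provides.
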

	
	The layout of the article is as follows. In \cref{section2}, we recall some preliminaries on algebraic groups, Lie algebras, Standard Monomial Theory, Geometric Invariant Theory and Deodhar components. In \cref{section3}, we make some observations about the standard tableau associated to a standard monomial in $H^0(X^{v_{\underline{m}}}_{w_{r,n}}, \mathcal{L}(n\omega_r)^{\otimes k})^{T}.$ Also, we prove that the homogeneous coordinate ring of $T\backslash \backslash (X^{v_{\underline{m}}}_{w_{r,n}})^{ss}_T(\mathcal{L}(n\omega_r))$ is generated by $H^0(X^{v_{\underline{m}}}_{w_{r,n}}, \mathcal{L}(n\omega_r))^{T}$ as a $\mathbb{C}$-algebra. In \cref{section5}, we study the Deodhar component $\mathcal{R}^{v_{\underline{m}}}_{w_{r,n}}$ in the Richardson variety $X^{v_{\underline{m}}}_{w_{r,n}}.$ Further, we find the common factor of the restriction of all $T$-invariant standard monomials to $\mathcal{R}^{v_{\underline{m}}}_{w_{r,n}}$ and the non-common factor of the restriction of a $T$-invariant standard monomial to $\mathcal{R}^{v_{\underline{m}}}_{w_{r,n}}.$ In \cref{section6}, we prove \cref{theorem1.1} (see \cref{theorem8.1}).

	\section{Notation and preliminaries}\label{section2}
	
	In this section, we set up some notation and preliminaries. We refer to \cite{R}, \cite{Hum1}, \cite{Hum2},  \cite{LS}, \cite{Mumford}, \cite{New} for preliminaries.
	
	Let $G=SL(n,\mathbb{C})$ and $T$ be a maximal torus in $G.$ Let $R$ be the set of roots with respect to $T.$ Let $B$ (respectively, $B^{-}$) be the upper (respectively, lower) triangular matrices in $G$. Let $U$ and $U^{-}$ be the unipotent radical of $B$ and $B^{-}$ respectively. Let $R^{+}$ (respectively, $R^{-}$) be the positive roots with respect to $(T, B).$ Let $S= \{\alpha_1,\ldots,\alpha_{n-1}\}$ denote the set of simple roots in $R^{+}$. Let $W=N_{G}(T)/T$ denote the Weyl group of $G$ with respect to $T.$ Let $s_{i}$ be the simple reflection corresponding to $\alpha_i.$  For $\alpha_r \in S$, the subgroup of $G$ generated by $B$ and $\{n_{\alpha_i} : i \neq r\}$ is called the maximal parabolic subgroup of $G$ corresponding to $\alpha_r$, where $n_{\alpha_i}$ is a representative of $s_i$ in $N_G(T).$ We denote it by $P^{\alpha_r}.$ The Weyl group $W_{P^{\alpha_r}}$ of $P^{\alpha_r}$ is the subgroup of $W$ generated by $\{s_i : i \neq r\}.$ Let $W^{P^{\alpha_r}} = \{w \in W : w(\alpha) \in R^{+} \text{ for all } \alpha \in S \setminus \{\alpha_r\}\}$ be the set of all minimal coset representatives of $W/W_{P^{\alpha_r}}$ in $W.$ Then  $G/P^{\alpha_{r}}$ is the Grassmannian $G_{r,n}$ of all $r$-dimensional subspaces of $\mathbb{C}^{n}.$ 
	
	Let $I(r,n)=\{(a_{1},a_{2},\ldots, a_{r}) \in \mathbb{N}^r: 1\le a_{1}<a_{2}<\cdots <a_{r}\le n\}.$ There is a natural order on $I(r,n),$ given by $(a_{1},a_{2},\ldots, a_{r})\le (b_{1},b_{2},\ldots, b_{r})$ if and only if $a_{i} \leq b_{i}$ for all $1\le i\le r.$ Consider the Bruhat order on $W^{P^{\alpha_r}}.$ Then there is an order preserving identification of $W^{P^{\alpha_{r}}}$ with $I(r,n),$ the correspondence is given by $w\in W^{P^{\alpha_{r}}}$ mapping to $(w(1),w(2),\ldots, w(r)).$ 
	
	Let $V=\mathbb{C}^{n}.$ Let $\{e_{1},e_{2},\ldots,e_{n}\}$ be the standard basis of $V.$ Let $V_{r}$ be the subspace of $V$ spanned by $\{e_{1},e_{2},\ldots, e_{r}\}.$ Then there is a natural projective variety structure on $G_{r,n}$ given by the Pl\"ucker embedding $\phi: G_{r,n}\longrightarrow \mathbb{P}(\wedge^{r}V)$ sending $r$-dimensional subspace to its $r$-th exterior power. Furthermore, there is a $G$-equivariant isomorphism of $G/P^{\alpha_{r}}$ with $G_{r,n},$ sending $gP^{\alpha_{r}}$ to $gV_{r}.$ For the natural action of $T$ on $G/P^{\alpha_{r}},$ the $T$-fixed points of $G/P^{\alpha_{r}}$ are identified with the elements of $W^{P^{\alpha_{r}}},$ sending $w$ to $wP^{\alpha_{r}}.$ For $w\in W^{P^{\alpha_r}},$ the closure of the $B$-orbit passing through $wP^{\alpha_{r}}$ in $G/P^{\alpha_{r}}$ has a natural structure of a projective variety called Schubert variety associated to $w,$ denoted by $X(w).$ Let $e_{\underline{i}}=e_{i_1} \wedge e_{i_2} \wedge \cdots \wedge e_{i_r},$ for $\underline{i}=(i_1, i_2, \ldots, i_r) \in I(r,n).$ Then $\{e_{\underline{i}}: \underline{i} \in I(r,n)\}$ forms a basis of $\wedge^rV.$ Let $\{p_{\underline{i}}: \underline{i}\in I(r,n)\} \subseteq (\wedge^rV)^{*}$ be the dual basis of the basis $\{e_{\underline{i}}: \underline{i} \in I(r,n)\}.$ Then $p_{\underline{i}}$'s are called the Pl\"{u}cker coordinates. Note that $p_{\underline{i}}|_{X(w)} \neq 0$ if and only if $\underline{i} \leq (w(1), w(2), \ldots, w(r)).$ The restriction of a monomial $p_{\tau_1}p_{\tau_2}\ldots p_{\tau_m} \in H^0(G_{r,n},\mathcal{L}(\omega_r)^{\otimes m})$ to $X(w),$ where $\tau_1, \tau_2, \ldots, \tau_m \in I(r,n)$ is said to be a standard monomial of degree $m$ if $\tau_1 \leq \tau_2 \leq \cdots \leq \tau_m \leq w.$ The standard monomials of degree $m$ on $X(w)$ form a basis of $H^0(X(w),\mathcal{L}(\omega_r)^{\otimes m})$ (see \cite[Theorem 4.5.0.5, p.43]{LS}).
	
	Let $v, w \in W$ and let $X^v:=\overline{B^{-}vP^{\alpha_r}/P^{\alpha_r}}$ be the opposite Schubert variety in $G/P^{\alpha_r}$ corresponding to $v.$ Then a Richardson variety $X^v_w$ in $G/P^{\alpha_r}$ is the intersection of $X(w)$ and $X^v$ in $G/P^{\alpha_r}$ with a reduced variety structure. Richardson varieties are $T$-stable irreducible varieties. Note that $X^v_w$ is non-empty if and only if $v \leq w$ (see \cite[Lemma 1, p.655]{BL}).
	
	Let $X(T)$ (respectively, $Y(T)$) denote the group of all characters (respectively, one-parameter subgroups) of $T.$  Let $E_1:=X(T) \otimes \mathbb{R},$ $E_2:=Y(T) \otimes \mathbb{R}.$ Let $\langle \cdot , \cdot \rangle: E_1 \times E_2 \longrightarrow \mathbb{R}$ be the canonical non-degenerate bilinear form. Let $\bar{C}:=\{\lambda \in E_2 : \langle \alpha, \lambda \rangle \geq 0, \text{ for all } \alpha \in R^{+}\}.$ Note that for each $\alpha \in R,$ there is a homomorphism $\phi_{\alpha}:SL(2,\mathbb{C}) \longrightarrow G$ such that $\check{\alpha}:\mathbb{G}_m \longrightarrow G$ is given by $\check{\alpha}(t)=\phi_{\alpha}(\begin{pmatrix}
		t & 0\\
		0 & t^{-1}
	\end{pmatrix})$ (see \cite[Section 1.9, p.19]{carter}). Consider the homomorphisms $x_i: \mathbb{G}_{a} \to U$ and $y_i: \mathbb{G}_a \to U^{-}$ defined by
	$	x_{i}(m)=\phi_{\alpha_i}\begin{pmatrix}
		1 & m\\
		0 & 1
	\end{pmatrix} \text{and } y_{i}(m)=\phi_{\alpha_i}\begin{pmatrix}
		1 & 0\\
		m & 1
	\end{pmatrix}$
	for all $m \in \mathbb{C}.$ We also have $s_{\alpha}(\chi)=\chi - \langle \chi, \check{\alpha} \rangle \alpha$ for all $\alpha \in R$ and $\chi \in E_1.$ Let $\{\omega_i: i= 1,2, \ldots, n-1\} \subset E_1$ be the fundamental weights; i.e. $\langle \omega_i, \check{\alpha_j} \rangle=\delta_{ij}$ for all $i,j=1, 2, \ldots, n-1.$ There is a natural partial order $\leq$ on $X(T)$ defined by $\mu\le \lambda $ if and only if $\lambda -\mu$ is a non-negative integral linear combination of simple roots.
	
	Let $\mathcal{L}$ be a ample line bundle on $G/P^{\alpha_r}.$ Note that $\mathcal{L}$ is $T$-linearized (see \cite[Remark, p.67]{knop}). We also denote the restriction of the line bundle $\mathcal{L}$ on $X(w)$ by $\mathcal{L}.$ A point $x \in X(w)$ is said to be semistable with respect to the $T$-linearized line bundle $\mathcal{L}$ if there is a $T$-invariant section $s \in H^0(X(w),\mathcal{L}^{\otimes m})$ for some positive integer $m$ such that $s(x)\neq 0.$ We denote the set of all semistable points of $X(w)$ with respect to $\mathcal{L}$ by $X(w)^{ss}_T(\mathcal{L}).$ A point $x$ in $X(w)^{ss}_{T}(\mathcal{L})$ is said to be a stable point if the $T$-orbit of $x$ is closed in $X(w)^{ss}_{T}(\mathcal{L})$ and stabilizer of $x$ in $T$ is finite. We denote the set of all stable points of $X(w)$ with respect to $\mathcal{L}$ by $X(w)^{s}_T(\mathcal{L})$ (see \cite{Mumford}, \cite{New}). 
	
	Let $\lambda$ be a dominant character of $T$ with respect to $G.$ Let $V(\lambda)$ be the irreducible representation of $G$ with highest weight $\lambda$.  Let $P_{\lambda}$ be the stabiliser in $G$ of the highest weight line in $V(\lambda).$ Let $\mathcal{L}_{\lambda}$ be the homogeneous ample line bundle on $G/P_{\lambda}$ associated to $\lambda$. The $T$-linearization of $\mathcal{L}_{\lambda}$ is given by restricting the action of $G$ on $\mathcal{L}_{\lambda}$ to $T.$ The following theorem describes which line bundles descend to the GIT  quotient $T \backslash\backslash (G/P_{\lambda})^{ss}_T(\mathcal{L}_\lambda)$ (see \cite[Theorem 3.10, p.764]{Kum}). Kumar's result in \cite{Kum} is more general than what is presented here.
	
	\begin{theorem}$($See \cite[Theorem 3.10, p.764]{Kum}$)$\label{shrawan} Let $G=SL(n, \mathbb{C}).$ Then the line bundle $\mathcal{L}_{\lambda}$ descends to a line bundle on the GIT quotient $T \backslash\backslash (G/P_{\lambda})^{ss}_T(\mathcal{L}_\lambda)$ if and only if $\lambda$ lies in the root lattice. 
	\end{theorem}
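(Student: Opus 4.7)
The plan is to invoke Kempf's descent lemma (the Drezet--Narasimhan criterion): a $T$-linearized line bundle $\mathcal{L}$ on a variety $X$ descends to a line bundle on the GIT quotient $T\backslash\backslash X^{ss}_T(\mathcal{L})$ if and only if, for every polystable point $x$ (one whose $T$-orbit is closed in the semistable locus), the isotropy group $T_x$ acts trivially on the fiber $\mathcal{L}_x$. I will verify that for $X = G/P_\lambda$ and $\mathcal{L} = \mathcal{L}_\lambda$ this stabilizer-triviality condition is equivalent to $\lambda$ lying in the root lattice of $SL(n,\mathbb{C})$.

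For the necessity, the center $Z := Z(SL(n,\mathbb{C})) \cong \mu_n$, embedded diagonally in $T$, acts trivially on $G/P_\lambda$ (being central in $G$) and hence is contained in $T_x$ for every point $x$. Writing $\lambda = \sum_{i=1}^n c_i \varepsilon_i$, the restriction $\lambda|_Z$ is the character $\zeta \mapsto \zeta^{\sum c_i}$ of $\mu_n$; this is trivial exactly when $n \mid \sum c_i$, equivalently when $\lambda$ lies in the $\mathbb{Z}$-span of the simple roots $\varepsilon_i - \varepsilon_{i+1}$, i.e., in the root lattice. Thus if $\lambda$ is not in the root lattice, the $Z$-subgroup of $T_x$ already violates Kempf's criterion at every polystable point, and $\mathcal{L}_\lambda$ cannot descend.

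For the sufficiency, suppose $\lambda$ is in the root lattice, so that the $T$-action on $\mathcal{L}_\lambda$ factors through $\overline{T} := T/Z$, the maximal torus of $PGL(n,\mathbb{C})$. I must verify that for every polystable $x$, the stabilizer $\overline{T}_x$ acts trivially on $\mathcal{L}_{\lambda,x}$. For any one-parameter subgroup $\chi \colon \mathbb{G}_m \to \overline{T}_x$, applying the Hilbert--Mumford numerical criterion to both $\chi$ and $\chi^{-1}$ at the polystable point $x$ forces the Mumford weight $\mu^{\mathcal{L}_\lambda}(x,\chi) = 0$, which equals (up to sign) the weight by which $\chi$ acts on the fiber; hence the connected component $\overline{T}_x^{\circ}$ acts trivially. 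The remaining obstacle --- showing the finite component group $\overline{T}_x / \overline{T}_x^{\circ}$ also acts trivially --- is the main technical point; I would handle it either by an explicit orbit-by-orbit analysis (the polystable orbit structure for $SL(n)$ acting on $G/P_\lambda$ is well understood via Gel'fand--Macpherson and the matroid descriptions of Skorobogatov and Kannan), or, following Kumar \cite{Kum}, by exhibiting $\mathcal{L}_\lambda$ as pulled back from a line bundle on the adjoint quotient $G^{\mathrm{ad}}/P_\lambda^{\mathrm{ad}}$ under the root-lattice hypothesis and then invoking a rigidity argument for Picard groups of GIT quotients to upgrade descent from a high power of $\mathcal{L}_\lambda$ back to $\mathcal{L}_\lambda$ itself.
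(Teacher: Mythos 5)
The paper does not actually prove this statement: it is imported verbatim from Kumar \cite{Kum} (Theorem 3.10 there), with only the remark that Kumar's result is more general. So there is no in-paper proof to compare against, and I can only judge your proposal on its own terms. Your overall strategy --- Kempf's descent lemma (the stabilizer-acts-trivially-on-the-fiber criterion at polystable points) combined with an isotropy-character computation --- is the standard and correct route, and it is in the spirit of how Kumar argues. Your necessity direction is complete: the center $Z\cong\mu_n$ lies in $T_x$ for every $x$, it acts on \emph{every} fiber of $\mathcal{L}_\lambda$ by the single character $\lambda|_Z$ (all weights of $V(\lambda)$ differ from $\lambda$ by roots, and roots vanish on $Z$), and $\lambda|_Z$ is trivial precisely when $n\mid\sum c_i$, i.e.\ when $\lambda$ lies in the root lattice.

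The sufficiency direction, however, contains a genuine and unresolved gap exactly where you flag it. The Hilbert--Mumford argument only shows that the identity component $\overline{T}_x^{\circ}$ of the stabilizer acts trivially on $\mathcal{L}_{\lambda,x}$; the criterion requires triviality on all of $\overline{T}_x$, and a character of $T$ that is trivial on $Z$ and on $\overline{T}_x^{\circ}$ can still be nontrivial on the finite component group of a diagonalizable subgroup. Handling this finite part is the actual content of Kumar's theorem and requires a concrete description of the isotropy subgroups of $T$ at (polystable) points of $G/P_\lambda$; neither of your two proposed workarounds is carried out, and the second one does not work as stated: descent of a high power $\mathcal{L}_\lambda^{\otimes N}$ is strictly weaker than descent of $\mathcal{L}_\lambda$, because the obstruction is precisely a torsion character of the stabilizers, which is exactly what taking powers kills. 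As written, the ``if'' half of your argument is a plan rather than a proof.
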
 
	
	We recall some basic facts about standard Young tableau for generalized flag varieties $G/P^{I},$ where $I$ is a subset of $S$ (see \cite[p.216]{LB}). Let $\lambda=\Sigma_{i=1}^{n-1} a_i\omega_i$, $a_i \in \mathbb{Z}_{\geq 0}$ be a dominant weight. To $\lambda$ we associate a Young diagram (denoted by $\Gamma$) with $\lambda_i$ number of boxes in the $i$-th column, where $\lambda_i:=a_i+\ldots+a_{n-1}$, $1 \leq i \leq n-1$.
	
	A Young diagram $\Gamma$ associated to a dominant weight $\lambda$ is said to be a Young tableau if the diagram is filled with integers $1, 2, \ldots, n$. We also denote this Young tableau by $\Gamma$. A Young tableau is said to be standard if the entries along any column is non-decreasing from top to bottom and along any row is strictly increasing from left to right. 
	
	Given a Young tableau $\Gamma$, let $\tau=\{i_1,i_2,\ldots,i_d\}$ be a typical row in $\Gamma$, where $1 \leq i_1 < \cdots < i_d \leq n$, for some $1 \leq d \leq n-1$. To the row $\tau$, we associate the Pl\"{u}cker coordinate $p_{i_1,i_2, \ldots,i_d}$. We set $p_{\Gamma}=\prod_{\tau}p_{\tau}$, where the product is taken over all the rows of $\Gamma$. We say that $p_{\Gamma}$ is a standard monomial on $G/P_{\lambda}$ if $\Gamma$ is standard.
	
	Now we recall the definition of weight of a standard Young tableau $\Gamma$ (see \cite[Section 2, p.336]{littelmann}). For a positive integer $1\leq i\leq n$, we denote by $c_{\Gamma}(i)$,
	the number of boxes of $\Gamma$ containing the integer $i$. Let $\epsilon_i: T\to \mathbb G_m$  
	be the character defined as $\epsilon_i(diag(t_1,\ldots, t_n))=t_i$. 
	We define the weight of $\Gamma$ as 
	$$wt(\Gamma):=c_{\Gamma}(1)\epsilon_1+ \cdots + c_{\Gamma}(n)\epsilon_n.$$
	
	Now we recall the following lemma about $T$-invariant monomials in $H^0(G/P_{\lambda}, \mathcal L_{\lambda}).$
	
	\begin{lemma}$($See \cite[Lemma 3.1, p.4]{NP}$)$\label{lemma2.2}
		A monomial $p_{\Gamma}\in H^0(G/P_{\lambda}, \mathcal L_{\lambda})$ is $T$-invariant if and only if  all the entries in $\Gamma$ appear equal number of times. 
	\end{lemma}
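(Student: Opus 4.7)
The statement is a weight calculation: $p_{\Gamma}$ is $T$-invariant exactly when its weight as a section of $\mathcal L_\lambda$ is trivial, so the plan is to compute that weight explicitly and then use the relation that holds among the $\epsilon_i$'s in $X(T)$ for $SL(n,\mathbb{C})$.

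First I would record the weights of the basic objects. The Plücker coordinate $p_{i_1,\ldots,i_d}$ is the dual basis vector to $e_{i_1}\wedge\cdots\wedge e_{i_d}$, and $T$ acts on this wedge with weight $\epsilon_{i_1}+\cdots+\epsilon_{i_d}$; hence the weight of $p_{i_1,\ldots,i_d}$ as a section equals $\epsilon_{i_1}+\cdots+\epsilon_{i_d}$ up to sign (matching the sign convention in the definition of $wt(\Gamma)$ recalled just before the lemma). Multiplying over all rows $\tau$ of $\Gamma$, the weight of $p_{\Gamma}=\prod_{\tau}p_{\tau}$ is the sum, over all boxes of $\Gamma$, of $\epsilon_{\text{(entry)}}$. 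Collecting terms by the integer appearing in the box, this is exactly
\[
wt(p_{\Gamma})\;=\;c_{\Gamma}(1)\epsilon_1+c_{\Gamma}(2)\epsilon_2+\cdots+c_{\Gamma}(n)\epsilon_n,
\]
which coincides with $wt(\Gamma)$ as defined by Littelmann.

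Next I would invoke the description of $X(T)$ for $G=SL(n,\mathbb{C})$: the characters $\epsilon_1,\ldots,\epsilon_n$ generate $X(T)$ subject to the single relation $\epsilon_1+\epsilon_2+\cdots+\epsilon_n=0$ (because $\det=1$). Consequently, an integer combination $\sum_{i=1}^n c_i\epsilon_i$ vanishes in $X(T)$ if and only if $(c_1,\ldots,c_n)$ is a scalar multiple of $(1,1,\ldots,1)$, i.e.\ $c_1=c_2=\cdots=c_n$.

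Combining the two steps: $p_{\Gamma}$ is $T$-invariant iff $wt(p_{\Gamma})$ is the trivial character of $T$, iff $c_{\Gamma}(1)=c_{\Gamma}(2)=\cdots=c_{\Gamma}(n)$, which is precisely the statement that all entries $1,\ldots,n$ occur the same number of times in $\Gamma$. There is no real obstacle here; the only subtlety worth flagging is the sign convention for the $T$-action on sections versus on basis vectors of $\wedge^r V$, but this sign does not affect whether the weight is zero and so is immaterial for the equivalence.
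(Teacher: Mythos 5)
Your argument is correct: the weight of $p_{\Gamma}$ is $\pm\big(c_{\Gamma}(1)\epsilon_1+\cdots+c_{\Gamma}(n)\epsilon_n\big)$, and since for $SL(n,\mathbb{C})$ the only relation in $X(T)$ is $\epsilon_1+\cdots+\epsilon_n=0$, this character is trivial exactly when $c_{\Gamma}(1)=\cdots=c_{\Gamma}(n)$. The paper itself gives no proof — it quotes the lemma from \cite{NP} — and your weight computation is precisely the standard argument behind that cited result, so there is nothing to add.
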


	\subsection{Some preliminaries on Deodhar component}
	In this subsection we recall some preliminaries on Deodhar component. In \cite{deodhar} Deodhar considered the intersection in $G/B$ of the open cell in a Schubert variety with the open cell of an opposite Schubert variety. For $v, w \in W,$ define the Richardson strata $$R^v_w=(BwB/B) \cap (B^{-}vB/B).$$
	
	Recall that for $v, w \in W$ a Richardson variety in $G/B$ is defined to be the intersection of $X(w) \cap X^v.$ Since both $X(w)$ and $X^v$ contain the intersection of $(BwB/B) \cap (B^{-}vB/B).$ it is clear that $R_w^v \subseteq X_w^v.$ And so Richardson strata is empty if $v \nleq w$ and the closure of $R_w^v$ is $X_w^v.$
	
	In \cite{deodhar} Deodhar gave a refined decomposition of a Richardson strata in $G/B$ into disjoint locally closed subvarieties of a Schubert variety. We follow the notation from Marsh and Reitsch \cite{marsh}, and Kodama and Williams \cite{kodama}. The definitions and examples are taken verbatim from \cite{kodama} since it is their notation and set up that we use in our proofs.
	
	Fix a reduced expression $\mathbf{w}= s_{i_1}s_{i_2}\cdots s_{i_m}.$ We define a subexpression $\mathbf{v}$ of $\mathbf{w}$ to be a word obtained from the reduced expression $\mathbf{w}$ by replacing some of the factors with $1.$ For example, consider a reduced expression in $S_4,$ say $s_3s_2s_1 s_3s_2s_3.$ Then $s_3s_21s_3s_21$ is a subexpression of $s_3s_2s_1s_3s_2s_3.$ Given a subexpression $\mathbf{v},$ we set $\mathbf{v}_{(k)}$ to be the product of the leftmost $k$ factors of $\mathbf{v},$ if $k \geq 1,$ and set $\mathbf{v}_{(0)}= 1.$ The following definition was given in \cite{marsh} and was inspired from Deodhar’s paper \cite{deodhar}.
	
	\begin{definition}
		Given a subexpression $\mathbf{v}$ of a reduced expression $\mathbf{w} = s_{i_1}s_{i_2}\cdots s_{i_m},$ we define
		\begin{center}
			$J_\mathbf{v}^{\circ}:= \{k \in \{1, \ldots, m\}| \mathbf{v}_{(k-1)} < \mathbf{v}_{(k)}\}$\\
			$J_\mathbf{v}^{\Box}:= \{k \in \{1, \ldots, m\}| \mathbf{v}_{(k-1)} = \mathbf{v}_{(k)}\}$\\
			$J_\mathbf{v}^{\bullet}:= \{k \in \{1, \ldots, m\}| \mathbf{v}_{(k-1)} > \mathbf{v}_{(k)}\}.$
		\end{center}
		The expression $\mathbf{v}$ is called non-decreasing if $\mathbf{v}_{(j-1)} \leq \mathbf{v}_{(j)}$ for all $j = 1, \ldots, m,$ and in this case $J_\mathbf{v}^{\bullet}=\emptyset.$
	\end{definition}
	The following definition is from \cite[Definition 2.3]{deodhar}.
	
	\begin{definition}[Distinguished subexpressions] A subexpression $\mathbf{v}$ of $\mathbf{w}$ is called distinguished if we have
		$$\mathbf{v}_{(j)} \leq \mathbf{v}_{(j-1)} s_{i_j} ~\forall j \in \{1, \ldots, m\}.$$
	\end{definition}
	
	In other words, if right multiplication by $s_{i_j}$ decreases the length of $\mathbf{v}_{(j-1)},$ then in a distinguished subexpression we must have $\mathbf{v}_{(j)} = \mathbf{v}_{(j-1)}s_{i_j}.$
	
	We write $\mathbf{v} \prec \mathbf{w}$ if $\mathbf{v} $ is a distinguished subexpression of $\mathbf{w}.$
	
	\begin{definition}[Positive distinguished subexpressions] We call a subexpression $\mathbf{v} $ of $\mathbf{w}$ a positive distinguished subexpression (or a PDS for short) if $\mathbf{v}_{(j-1)} < \mathbf{v}_{(j-1)}s_{i_j},$ for all $j \in \{1, \ldots, m\}.$
	\end{definition}
	
	In \cite{marsh} Marsh and Rietsch proved the following.
	
	\begin{lemma} Given $v \leq w$ and a reduced expression $\mathbf{w} = s_{i_1}s_{i_2}\cdots s_{i_m}$ for $w,$ there is a unique PDS $\mathbf{v} ^{+}$ for $v$ in $\mathbf{w}.$
	\end{lemma}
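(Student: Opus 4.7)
The plan is to prove existence and uniqueness simultaneously by induction on the length $m$ of the reduced expression $\mathbf{w} = s_{i_1}\cdots s_{i_m}$, analyzing the rightmost factor $s_{i_m}$ and showing that the choice of whether to include it is forced by $v$ alone. The base case $m=0$ is immediate, since then $v=w=e$ and the empty subexpression is the unique PDS.

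For the inductive step, set $\mathbf{w}' := s_{i_1}\cdots s_{i_{m-1}}$, a reduced expression for $w' := ws_{i_m}$, and note $w' < w$. Any PDS $\mathbf{v}$ for $v$ in $\mathbf{w}$ has $\mathbf{v}_{(m-1)} \in \{v,\, vs_{i_m}\}$, and the defining condition $\mathbf{v}_{(m-1)} < \mathbf{v}_{(m-1)}s_{i_m}$ forces a single choice. If $vs_{i_m} < v$, then we must include $s_{i_m}$ (so $\mathbf{v}_{(m-1)} = vs_{i_m}$); skipping would give $\mathbf{v}_{(m-1)} = v$ with $\mathbf{v}_{(m-1)}s_{i_m} = vs_{i_m} < v = \mathbf{v}_{(m-1)}$, violating the PDS condition at step $m$. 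If $vs_{i_m} > v$, we must skip $s_{i_m}$ (so $\mathbf{v}_{(m-1)} = v$); including would give $\mathbf{v}_{(m-1)} = vs_{i_m}$ with $\mathbf{v}_{(m-1)}s_{i_m} = v < vs_{i_m}$, again violating the condition. Let $v' \in \{v, vs_{i_m}\}$ denote the forced value of $\mathbf{v}_{(m-1)}$. Then the restriction of $\mathbf{v}$ to positions $1,\ldots,m-1$ is a PDS for $v'$ in $\mathbf{w}'$, so uniqueness of the PDS for $v$ in $\mathbf{w}$ reduces to uniqueness for $v'$ in $\mathbf{w}'$, which follows from the inductive hypothesis.

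For existence we must know $v' \leq w'$ in order to apply induction. This is a direct consequence of the \emph{lifting property} of the Bruhat order: since $v \leq w$ and $ws_{i_m} < w$, one has $\min(v, vs_{i_m}) \leq ws_{i_m} = w'$. In both cases the forced $v'$ equals precisely this minimum ($v' = vs_{i_m}$ when $vs_{i_m} < v$, and $v' = v$ when $v < vs_{i_m}$), so $v' \leq w'$. The inductive hypothesis then produces a (unique) PDS for $v'$ in $\mathbf{w}'$, and appending the appropriate choice at position $m$ yields the required PDS $\mathbf{v}^+$ for $v$ in $\mathbf{w}$.

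The main obstacle is recognizing the correct direction for the induction: building the PDS from right to left is what makes the choice at each step determined by $v$ alone, read off from whether $vs_{i_m} < v$ or $vs_{i_m} > v$. A left-to-right induction would require tracking an unknown future trajectory toward $v$ and would not make the forced-choice structure visible. Once the right-to-left perspective is adopted, what remains is a short case analysis for the forced choice and a single invocation of the standard lifting property of Bruhat order.
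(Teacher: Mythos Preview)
Your argument is correct. The induction from the right, together with the observation that the PDS condition at position $m$ forces $\mathbf{v}_{(m-1)}=\min(v,vs_{i_m})$, and the lifting property of Bruhat order (in the form: if $ws<w$ and $v\le w$ then $\min(v,vs)\le ws$) to guarantee $v'\le w'$, gives both existence and uniqueness cleanly. One small point worth making explicit for completeness: when $v=w$ the lifting property as usually stated assumes $v<w$, but then automatically $vs_{i_m}=ws_{i_m}<w=v$, so $v'=vs_{i_m}=w'$ and the induction still applies.

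As for comparison with the paper: the paper does not actually prove this lemma. It is quoted as a result of Marsh and Rietsch \cite{marsh} and stated without argument in the preliminaries. Your proof is the standard right-to-left construction (and is essentially the argument behind the Marsh--Rietsch statement), so you have supplied what the paper only cites.
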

	We now describe the Deodhar decomposition of the Richardson strata. Marsh and Rietsch \cite{marsh} gave explicit parameterizations for each Deodhar component, identifying each one with a subset in the group. Much of this appears implicitly in Deodhar’s paper, but we refer to \cite{marsh} for our exposition because these statements are made explicit there and the authors make references to Deodhar’s paper wherever needed.
	
	Let $x_i$ and $y_i$ be the homomorphisms as in paragraph no. $4$ of page no. $3$.
	
	\begin{definition}\cite[Definition 5.1]{marsh} Let $\mathbf{w} = s_{i_1}s_{i_2}\cdots s_{i_m}$ be a reduced expression for $w,$ and
		let $\mathbf{v}$ be a distinguished subexpression. Define a subset $\mathbf{G^v_w}$ in $G$ by
		\begin{equation}\mathbf{G^v_w}:=\{g=g_1g_2\cdots g_m|\Bigg\{
			\left.\begin{array}{lr}
				g_l=x_{i_l}(m_l)s_{i_l} & \text{if $l \in J_\mathbf{v}^{\bullet}$}\\
				g_l=y_{i_l}(p_l) & \text{if $l \in J_\mathbf{v}^{\Box}$}\\
				g_l=s_{i_l} & \text{if $l \in J_\mathbf{v}^{\circ}$}\\
			\end{array}\right. \text{ where } l \in \{1,2,\ldots,m\} \text{ and } m_l, p_l \in \mathbb{C}\}.
		\end{equation}
		
	\end{definition}
	From \cite[Theorem 4.2]{marsh} there is an isomorphism from $\mathbb{C}^{*|J_\mathbf{v}^{\Box}|} \times \mathbb{C}^{|J_\mathbf{v}^{\circ}|}$ to $\mathbf{G^v_w}.$
	
	\begin{definition}[Deodhar Component] The Deodhar component $\mathcal{R}^{\mathbf{v}}_{\mathbf{w}}$ is the image of $\mathbf{G^v_w}$
		under the map $\mathbf{G^v_w} \subseteq U^{-}vB \cap BwB \longrightarrow G/B$ sending $g$ to $gB.$
	\end{definition}
	
	Then from \cite[Theorem 1.1]{deodhar} one has \cite[Corollary 1.2]{deodhar}, also from Deodhar.

	\begin{definition}[Deodhar Component in $G/P^{\alpha_r}$] Let $\mathbf{v}, \mathbf{w} \in W^{P^{\alpha_r}}.$ Then the Deodhar component $\mathcal{R}^{\mathbf{v}}_{\mathbf{w}}$ is the image of $\mathbf{G^v_w}$
		under the map $\mathbf{G^v_w} \subseteq U^{-}vP^{\alpha_r} \cap BwP^{\alpha_r} \longrightarrow G/P^{\alpha_r}$ sending $g$ to $gP^{\alpha_r}.$
	\end{definition}
	
	\begin{theorem} $R^v_w = \bigsqcup_{\mathbf{v} \prec \mathbf{w}}\mathcal{R}^{\mathbf{v}}_{\mathbf{w}}$ the union taken over all distinguished subexpressions $\mathbf{v}$ such
		that $\mathbf{v}_{(m)}=v.$ The component $\mathcal{R}^{\mathbf{v}^{+}}_\mathbf{w}$ is open in $R^v_w.$
	\end{theorem}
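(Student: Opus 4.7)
The plan is to prove the decomposition by induction on $m = \ell(w)$, following Deodhar's original strategy. The key geometric tool is the projection $\pi_{i_m}: G/B \to G/P_{i_m}$ onto the partial flag variety for the minimal parabolic $P_{i_m}$ generated by $B$ and $s_{i_m}$; each fiber is $P_{i_m}/B \cong \mathbb{P}^1$, and the three forms prescribed for $g_l$ in the definition of $\mathbf{G^v_w}$ match the natural ``$B$-orbit vs.\ $B^-$-orbit'' stratification of this $\mathbb{P}^1$ as a group element (not merely as a coset).

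Setting $\mathbf{w}' = s_{i_1}\cdots s_{i_{m-1}}$, a reduced expression for $w' = w s_{i_m}$, I peel off the last factor. Given $gB \in R^v_w$, I show that $g$ factors uniquely as $g = g'' g_m$ with $g_m \in P_{i_m}$ taking exactly one of the three prescribed forms and $g''$ a product $g_1 \cdots g_{m-1}$ representing a point $g''B$ of some Richardson stratum in the Schubert variety of $w'$. By the inductive hypothesis, $g''B$ lies in a unique Deodhar component $\mathcal{R}^{\mathbf{v}''}_{\mathbf{w}'}$ for a distinguished subexpression $\mathbf{v}''$ of $\mathbf{w}'$ with $\mathbf{v}''_{(m-1)}$ prescribed; appending the correct type of $g_m$ yields the desired distinguished subexpression $\mathbf{v}$ of $\mathbf{w}$ with $\mathbf{v}_{(m)} = v$, proving the union $\bigsqcup \mathcal{R}^{\mathbf{v}}_\mathbf{w}$ covers $R^v_w$ and that the components are mutually disjoint.

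The combinatorial heart is a case analysis on step $m$. Let $v' = \mathbf{v}_{(m-1)}$. Using the BN-pair relations
\[
B v' B \cdot B s_{i_m} B = \begin{cases} B v' s_{i_m} B \cup B v' B & \text{if } v' s_{i_m} < v', \\ B v' s_{i_m} B & \text{if } v' s_{i_m} > v', \end{cases}
\]
together with the analogous relations for $B^- v B$, I determine which of $g_m = x_{i_m}(m_m)s_{i_m}$, $g_m = y_{i_m}(p_m)$, or $g_m = s_{i_m}$ is forced by the joint double-coset conditions $g \in BwB \cap B^-vB$. When $v's_{i_m} < v'$ the distinguished condition forces $\mathbf{v}_{(m)} = v's_{i_m}$, corresponding to $m \in J^{\bullet}$ and $g_m = x_{i_m}(m_m)s_{i_m}$. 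When $v's_{i_m} > v'$ two mutually exclusive possibilities appear: $\mathbf{v}_{(m)} = v's_{i_m}$ (case $J^{\circ}$, $g_m = s_{i_m}$) or $\mathbf{v}_{(m)} = v'$ (case $J^{\square}$, $g_m = y_{i_m}(p_m)$ with $p_m \neq 0$). These match the definition of distinguished subexpressions precisely, and uniqueness of the parameters is an exercise in the uniqueness of Bruhat coset representatives.

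For the openness claim, the parametrization shows $\dim \mathcal{R}^{\mathbf{v}}_{\mathbf{w}} = |J^{\circ}_\mathbf{v}| + |J^{\square}_\mathbf{v}|$, and a length computation using the PDS property $\mathbf{v}^+_{(j-1)} < \mathbf{v}^+_{(j-1)} s_{i_j}$ for all $j$ (equivalently $J^{\bullet}_{\mathbf{v}^+} = \emptyset$) gives $|J^{\circ}_{\mathbf{v}^+}| + |J^{\square}_{\mathbf{v}^+}| = \ell(w) - \ell(v) = \dim R^v_w$, while any other distinguished $\mathbf{v}$ with $\mathbf{v}_{(m)} = v$ has strictly smaller such sum. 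Since $R^v_w$ is irreducible, $\mathcal{R}^{\mathbf{v}^+}_{\mathbf{w}}$ is its unique top-dimensional stratum and hence open and dense. The main obstacle is the inductive case analysis at step $m$: verifying rigorously that the factorization $g = g_1 \cdots g_m$ with each $g_l$ of the prescribed type exists, and that the type (and parameter) at position $m$ is \emph{uniquely} determined by $gB \in R^v_w$. This amounts to analyzing how the $\mathbb{P}^1$-fiber of $\pi_{i_m}$ through $gB$ meets the orbits $B v B/B$ and $B^- v B/B$ simultaneously, for which the key input is that the Bruhat decomposition of $\mathbb{P}^1$ into a point and an affine line is transverse to the opposite decomposition.
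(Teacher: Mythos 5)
This theorem is not proved in the paper at all: it is recalled verbatim from Deodhar (Theorem 1.1 and Corollary 1.2 of his 1985 paper) and Marsh--Rietsch, so there is no in-paper argument to compare against. Your plan --- induction on $\ell(w)$, peeling off $s_{i_m}$ via the fibration $G/B\to G/P_{i_m}$, and a BN-pair case analysis matching the three types $J^\bullet, J^\Box, J^\circ$ to how the $\mathbb{P}^1$-fiber meets $BwB/B$ and $B^-vB/B$ --- is exactly Deodhar's original strategy, and the case analysis you describe (bullet forced when $v's_{i_m}<v'$; circle versus square when $v's_{i_m}>v'$, the latter requiring $p_m\neq 0$) is correct. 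As a plan it is sound, though you should be aware that the step you flag as ``the main obstacle'' (existence and uniqueness of the factorization $g=g_1\cdots g_m$ compatible with \emph{both} double-coset conditions simultaneously) is the entire content of Deodhar's Theorem 1.1, so what you have written is a roadmap rather than a proof.

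There is, however, a concrete error in the openness argument. The parametrization gives $\dim \mathcal{R}^{\mathbf{v}}_{\mathbf{w}} = |J^{\Box}_{\mathbf{v}}| + |J^{\bullet}_{\mathbf{v}}|$ (a $\mathbb{C}^*$-parameter for each square and a $\mathbb{C}$-parameter for each bullet; the circles carry no parameter), not $|J^{\circ}_{\mathbf{v}}|+|J^{\Box}_{\mathbf{v}}|$ as you wrote. Your displayed identity is also false: since $|J^{\circ}|+|J^{\Box}|+|J^{\bullet}|=\ell(w)$, for the PDS one has $|J^{\circ}_{\mathbf{v}^+}|+|J^{\Box}_{\mathbf{v}^+}|=\ell(w)$, not $\ell(w)-\ell(v)$. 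The correct count uses $|J^{\circ}_{\mathbf{v}}|-|J^{\bullet}_{\mathbf{v}}|=\ell(v)$, whence
\begin{equation*}
\dim \mathcal{R}^{\mathbf{v}}_{\mathbf{w}} = |J^{\Box}_{\mathbf{v}}|+|J^{\bullet}_{\mathbf{v}}| = \ell(w)-\ell(v)-|J^{\bullet}_{\mathbf{v}}|,
\end{equation*}
which is maximal, equal to $\dim R^v_w=\ell(w)-\ell(v)$, exactly when $J^{\bullet}_{\mathbf{v}}=\emptyset$, i.e.\ exactly for the unique PDS $\mathbf{v}^+$. With this correction your conclusion goes through: $\mathcal{R}^{\mathbf{v}^+}_{\mathbf{w}}$ is the unique stratum of full dimension in the irreducible variety $R^v_w$, hence dense, and being locally closed it is open. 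As written, though, your formula would assign the top stratum dimension $\ell(w)$, which cannot equal $\dim R^v_w$ unless $v=e$, so the argument does not compile numerically without the fix.
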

	
	Naturally when one is talking of the Deodhar decomposition of a Richardson strata in $G/P^{\alpha_r}$ one can take the projections of the components in $G/B$ into $G/P^{\alpha_r}.$ In \cite[Proposition 4.16]{kodama} the authors show that the Deodhar components of a Richardson strata in $G/P^{\alpha_r}$ are independent of $\mathbf{w}$ and only depends upon $w.$ This follows from the observation
	that any two reduced decompositions $\mathbf{w}$ and $\mathbf{w}'$ of $w$ are related by a sequence of commuting transpositions $s_is_j = s_js_i.$
	
	\section{Some combinatorics of Young tableau and $R_1$ generation}\label{section3}
	In this section we make some observations about the standard tableau associated to a standard monomial in $H^0(X^{v_{\underline{m}}}_{w_{r,n}}, \mathcal{L}^{\otimes k})^{T},$ where $\mathcal{L}:=\mathcal{L}(n\omega_r).$ Further, we prove that the homogeneous coordinate ring of $T\backslash \backslash (X^{v_{\underline{m}}}_{w_{r,n}})^{ss}_T(\mathcal{L})$ is generated by $H^0(X^{v_{\underline{m}}}_{w_{r,n}}, \mathcal{L})^{T}$ as a $\mathbb{C}$-algebra.  Let $R_k=H^0(X^{v_{\underline{m}}}_{w_{r,n}}, \mathcal{L}^{\otimes k})^{T}$ $(k \in \mathbb{Z}_{\geq 0}).$ 
	
	\subsection{Some combinatorics of Young tableau}\label{subsection3.1}Let $f \in R_k$ be a standard monomial and $\Gamma$ be the standard Young tableau associated to $f.$ Then $\Gamma$ has $nk$ rows and $r$ columns. Since $f$ is $T$-invariant, by \cref{lemma2.2} we have $c_{\Gamma}(t)=rk$ for all $1 \leq t \leq n,$  
	where $c_{\Gamma}(t)$ is the number of boxes in $\Gamma$ containing $t.$ Let $row_i$ be the $i$-th row of $\Gamma$. Let $E_{i,j}$  be the $(i,j)$-th entry of the tableau $\Gamma$ and $N_{t,j}$ be the number of boxes in the $j$-th column of $\Gamma$ containing $t.$ Since we are working with $X^{v_{\underline{m}}}_{w_{r,n}},$ we have $row_1 \geq v_{\underline{m}}$ and $row_n \leq w_{r,n}.$ Thus, we have
	\begin{align}
		E_{1,j} \geq (j-2)q+m_{j-1}+1 \text{ for all } 2 \leq j \leq r \text{ and } E_{n,j} \leq jq+1 \text{ for all } 1 \leq j \leq r.
		\label{(6.1*)}
	\end{align}
	\begin{lemma}\label{lemma3.1}
		Let $j$ be a fixed integer such that $1 \leq j \leq r-1.$ Then every integer between $(j-1)q+2$ and $jq+1$ appears either in the $j$-th column or in the $j+1$-th column of $\Gamma.$ Also, every integer between $(r-1)q+2$ and $rq+1$ appears in the $r$-th column of $\Gamma.$
	\end{lemma}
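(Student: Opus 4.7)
My plan is to combine Lemma~\ref{lemma2.2}, which forces every integer $t \in \{1, \ldots, n\}$ to occur in $\Gamma$ (since $c_{\Gamma}(t) = rk > 0$), with the per-column bounds coming from the Richardson conditions, and reduce the statement to an elementary arithmetic comparison of intervals.

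Standard monomial theory on $X^{v_{\underline{m}}}_{w_{r,n}}$ gives $v_{\underline{m}} \leq \tau_i \leq w_{r,n}$ componentwise for every row $\tau_i$ of $\Gamma$, so combined with the non-decreasing column condition every entry $E_{i,c}$ satisfies
\[
v_{\underline{m}}(c) \;\leq\; E_{i,c} \;\leq\; w_{r,n}(c) \;=\; cq + 1,
\]
where $v_{\underline{m}}(1) = 1$ and $v_{\underline{m}}(c) = (c-2)q + m_{c-1} + 1$ for $c \geq 2$. Thus a column $c$ can contain a given $t$ only when $t$ lies in this interval.

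For the first claim, fix $t$ with $(j-1)q + 2 \leq t \leq jq + 1$ and $1 \leq j \leq r-1$, and suppose $t$ appears in column $c$. The upper bound $cq + 1 \geq t \geq (j-1)q + 2$ forces $c \geq j$. For the lower bound, when $c \geq 2$ the inequality $(c-2)q + m_{c-1} + 1 \leq jq + 1$ together with $m_{c-1} \geq 1$ yields $(c-2)q \leq jq - 1$, hence $c \leq j+1$; the case $c = 1$ is automatic and is compatible only with $j = 1$. So $c \in \{j, j+1\}$, and since $t$ must appear in $\Gamma$ by Lemma~\ref{lemma2.2}, it must appear in column $j$ or $j+1$. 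The second claim is the analogous upper bound step: for $t \geq (r-1)q + 2$, $cq + 1 \geq t$ forces $c \geq r$, and since $c \leq r$ we conclude $c = r$. The argument is essentially bookkeeping; the only care needed is in using $m_{c-1} \geq 1$ to sharpen the rational bound $c \leq j + 2 - \tfrac{1}{q}$ to the integer constraint $c \leq j+1$, and in separating the trivial $c = 1$ boundary case where the lower bound $v_{\underline{m}}(1)=1$ is vacuous.
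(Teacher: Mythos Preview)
Your proof is correct and uses essentially the same idea as the paper: the Richardson constraints $v_{\underline{m}} \leq \tau_i \leq w_{r,n}$ bound the entries of each column, forcing any $t \in [(j-1)q+2, jq+1]$ to land in column $j$ or $j+1$. The paper argues via the specific boundary entries $E_{1,j+2}$ and $E_{n,j-1}$ and therefore splits into the cases $j=1$, $2 \le j \le r-2$, $j=r-1$, whereas you apply the uniform per-column bound $v_{\underline{m}}(c) \le E_{i,c} \le cq+1$ directly and avoid that case analysis; the content is the same.
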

	\begin{proof}Assume that $j=1.$ Since $E_{1,3} \geq q+m_{2}+1 \geq q+2$ (see \eqref{(6.1*)}), every integer between $1$ and $q+1$ appears either in the first column or in the second column. Consider $2 \leq j \leq r-2.$ Since $E_{1,j+2} \geq jq+m_{j+1}+1 \geq jq+2$ and $E_{n,j-1} \leq (j-1)q+1$ (see \eqref{(6.1*)}), every integer between $(j-1)q+2$ and $jq+1$ appears either in the $j$-th column or in the $j+1$-th column. Now we assume that $j=r-1.$ Since $E_{n,r-2} \leq (r-2)q+1,$ every integer between $(r-2)q+2$ and $(r-1)q+1$ appears either in the $r-1$-th column or in the $r$-th column. 
		
		Since $E_{n,r-1} \leq (r-1)q+1,$ every integer between $(r-1)q+2$ and $rq+1$ appears in the $r$-th column.
	\end{proof}
	\begin{lemma} \label{lemma3.2}
		Let $j$ be a fixed integer such that $1 \leq j \leq r-1.$ Then we have \begin{itemize}
			\item[(i)] $(j-1)q+m_{j}+1 \leq E_{i,j+1} \leq jq+1$ for all $1 \leq i \leq k(r-j).$
			\item[(ii)] $E_{k(r-j)+1,j+1} \geq jq+2.$
		\end{itemize}
	\end{lemma}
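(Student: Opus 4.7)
The strategy is a counting argument that uses the $T$-invariance of $f$ (via \cref{lemma2.2}), together with the row/column constraints on $\Gamma$ recorded in \eqref{(6.1*)} and the standardness of the tableau.

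The lower bound in (i) is essentially free: by \eqref{(6.1*)} applied with $j+1$ in place of $j$, we have $E_{1,j+1} \geq (j-1)q + m_j + 1$, and since entries of a column of a standard tableau are non-decreasing, this bound propagates to every row $i$. So I would dispense with this part in a single sentence and devote the work to the upper bound and part (ii), which I would handle together.

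For the remaining two bounds, the plan is to count, in two different ways, the number of boxes of $\Gamma$ whose entry is $\leq jq+1$. By $T$-invariance and \cref{lemma2.2}, each value $t \in \{1,\ldots,n\}$ occurs exactly $rk$ times, so the total count is $(jq+1)rk$. Next I localize this count to the columns it can touch. For a column $\ell \le j$, the bound $E_{n,\ell} \le \ell q+1 \le jq+1$ from \eqref{(6.1*)} combined with non-decrease down the column forces \emph{every} entry of that column to be $\le jq+1$, contributing $nk$ boxes; summing over $\ell=1,\ldots,j$ gives $jnk = j(qr+1)k$. For a column $\ell \ge j+2$ (if any), the bound $E_{1,\ell} \ge (\ell-2)q + m_{\ell-1}+1 \ge jq+2$ (using $m_{\ell-1}\ge 1$) together with non-decrease forces \emph{every} entry of that column to be $\ge jq+2$, contributing $0$ boxes. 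Hence column $j+1$ must account for the remaining
\[
(jq+1)rk - j(qr+1)k = (r-j)k
\]
boxes with entry $\le jq+1$. Since column $j+1$ is non-decreasing, these are exactly its first $k(r-j)$ rows; rows $k(r-j)+1$ onwards must contain entries $\ge jq+2$. This gives the upper bound in (i) and the inequality in (ii) simultaneously.

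The step I expect to require the most care is the dichotomy for columns $\ell\ne j+1$: I need the constraint $E_{n,\ell}\le \ell q+1$ to come from the bottom-row inequality in \eqref{(6.1*)}, and the constraint $E_{1,\ell}\ge (\ell-2)q + m_{\ell-1}+1$ from the top-row inequality, and I must check that the endpoint cases $j=1$ (no columns $\ell \le 0$) and $j=r-1$ (no columns $\ell\ge j+2$) cause no trouble—in each case one of the two contributions is vacuous and the arithmetic $(jq+1)rk - j(qr+1)k = (r-j)k$ still holds. Once this bookkeeping is verified, no further combinatorial input is needed.
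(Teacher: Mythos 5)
Your proposal is correct and follows essentially the same route as the paper: both arguments double-count the boxes with entries at most $jq+1$ using $T$-invariance (each value occurring $rk$ times), show via the boundary constraints of \eqref{(6.1*)} that the first $j$ columns contribute $jk(rq+1)$ and the columns beyond $j+1$ contribute nothing, and deduce that column $j+1$ contains exactly $k(r-j)$ such boxes, which by column monotonicity gives both the upper bound in (i) and part (ii). The lower bound in (i) is handled identically in both, directly from \eqref{(6.1*)}.
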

	\begin{proof}
		Assume that $1 \leq j \leq r-2.$ Since $E_{1,j+2}\geq jq+m_{j+1}+1 \geq jq+2,$ every integer between $1$ and $jq+1$ appears in the first $j+1$ columns of $\Gamma$. For $j=r-1,$ every integer between $1$ and $(r-1)q+1$ appears in $r$ columns of $\Gamma$. Further note that for $1 \leq j \leq r-1,$ $\sum_{l=1}^{j}\sum_{t=1}^{jq+1}N_{t,l}=jk(rq+1)$ and $\sum_{t=1}^{jq+1}c_{\Gamma}(t)=rk(jq+1).$ Thus, $\sum_{t=1}^{jq+1}N_{t,j+1} = rk(jq+1)-jk(rq+1)=k(r-j).$ Hence, $E_{k(r-j),j+1} \leq jq+1$ and $E_{k(r-j)+1,j+1} \geq jq+2.$ In particular, since $E_{1,j+1} \geq (j-1)q+m_j+1,$ we have $(j-1)q+m_{j}+1 \leq E_{i,j+1} \leq jq+1$ for all $1 \leq i \leq k(r-j).$
	\end{proof}
	\begin{corollary}\label{corollary3.3} We have $E_{k,r} \leq (r-1)q+1$ and $E_{m,r}=(r-1)q+l,$ where $l \geq 2$ is the least positive integer such that $k(r(l-1)+1) \geq m.$
	\end{corollary}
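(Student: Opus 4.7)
Both assertions follow by pinning down exactly which rows of the $r$-th column of $\Gamma$ carry which values, using \cref{lemma3.1} and \cref{lemma3.2} as the two main inputs.

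\textbf{First assertion.} I would apply \cref{lemma3.2} directly with $j=r-1$. Part (i) of that lemma gives $E_{i,r} \leq (r-1)q+1$ for all $1 \leq i \leq k(r-(r-1)) = k$, and in particular $E_{k,r} \leq (r-1)q+1$. This is immediate from the stated lemma.

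\textbf{Second assertion.} The argument is a book-keeping in the $r$-th column. By \cref{lemma3.1} applied with $j=r-1$, every integer $t$ with $(r-1)q+2 \leq t \leq rq+1$ occurs only in column $r$ of $\Gamma$. Since $f$ is $T$-invariant, \cref{lemma2.2} gives $c_\Gamma(t) = rk$ for all such $t$, so each such value contributes $rk$ boxes in column $r$. By \cref{lemma3.2}(i)(ii) with $j=r-1$, the entries of column $r$ that are $\leq (r-1)q+1$ occupy precisely rows $1, \ldots, k$, while rows $k+1, \ldots, nk$ hold entries $\geq (r-1)q+2$. Because the column is weakly increasing from top to bottom (standardness), these remaining $nk - k = rqk$ positions list the values $(r-1)q+2, (r-1)q+3, \ldots, rq+1$ in non-decreasing order, each appearing $rk$ consecutive times.

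\textbf{Extracting the formula.} With this description, for $l \geq 2$ the value $(r-1)q+l$ occupies exactly rows $k + rk(l-2)+1, \ldots, k + rk(l-1)$ of column $r$. Hence $E_{m,r} = (r-1)q+l$ if and only if
\[
k + rk(l-2) < m \leq k + rk(l-1),
\]
equivalently $k(r(l-2)+1) < m \leq k(r(l-1)+1)$. This says precisely that $l \geq 2$ is the least positive integer satisfying $k(r(l-1)+1) \geq m$, which is the claim.

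\textbf{Anticipated obstacle.} There is essentially no conceptual difficulty here once \cref{lemma3.1} and \cref{lemma3.2} are in hand; the only care needed is to verify the two count identities (the total $rk$ appearances of each large value is entirely in column $r$, and exactly $k$ entries of column $r$ are $\leq (r-1)q+1$) so that the consecutive blocks of length $rk$ align correctly with the inequalities $k(r(l-1)+1) \geq m > k(r(l-2)+1)$.
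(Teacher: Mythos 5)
Your proof is correct and follows essentially the same route as the paper: \cref{lemma3.2} (with $j=r-1$) gives the cut at row $k$, and \cref{lemma3.1} together with $T$-invariance forces each value $(r-1)q+l$, $l\ge 2$, to fill a block of $rk$ consecutive rows of the $r$-th column, which yields the stated formula; you merely spell out the block-counting that the paper leaves implicit. (The only nitpick is that the fact you need from \cref{lemma3.1} is its final assertion about the $r$-th column, not the case $j=r-1$ of its first assertion.)
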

	\begin{proof}
		By \cref{lemma3.2} we have $E_{k,r} \leq (r-1)q+1$ and $E_{k+1,r} \geq (r-1)q+2.$ By \cref{lemma3.1} we have every integer between $(r-1)q+2$ and $rq+1$ appears in the $r$-th column of $\Gamma.$ Thus, $E_{m,r}=(r-1)q+l,$ where $l \geq 2$ is the least positive integer such that $k(r(l-1)+1) \geq m.$  
	\end{proof}
	\begin{lemma}\label{lemma3.4}
		Let $j$ be a fixed integer such that $1 \leq j \leq r-1.$ Then we have \begin{itemize}
			\item[(i)] For $2 \leq l \leq m_j,$ 
			\begin{align*}
				E_{m,j}=(j-1)q+l \text{ for all } k(r(l-1)-j+1)+1 \leq m \leq k(rl-j+1).
			\end{align*} 
			\item[(ii)] For $m_j+1 \leq l \leq q+1,$ \begin{align*}
				E_{m,j}=(j-1)q+l \text{ for all } k\big(r(l-1)-j+1\big)+1 \leq m \leq k\big(r(l-1)+1\big).
			\end{align*} 
		\end{itemize}
	\end{lemma}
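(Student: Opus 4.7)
My plan is to prove the lemma by bookkeeping, in column $j$ of $\Gamma$, the number of entries lying below each threshold and then using column-monotonicity to pin the values at the stated row indices. Write $C^{(j)}_{\leq t}$ (respectively $C^{(j)}_{<t}$) for the number of entries in column $j$ whose value is $\leq t$ (respectively $<t$). Because the column is sorted non-decreasingly, the full row interval on which $E_{m,j}=(j-1)q+l$ is exactly $[\,C^{(j)}_{<(j-1)q+l}+1,\ C^{(j)}_{\leq(j-1)q+l}\,]$; hence it suffices to show the equalities $C^{(j)}_{<(j-1)q+l}=k(r(l-1)-j+1)$ and $C^{(j)}_{\leq(j-1)q+l}=k(rl-j+1)$ in case (i), and the inequalities $C^{(j)}_{<(j-1)q+l}\leq k(r(l-1)-j+1)$ together with $C^{(j)}_{\leq(j-1)q+l}\geq k(r(l-1)+1)$ in case (ii).

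Two ingredients feed the count. First, \cref{lemma3.2} applied with index $j-1$ (or, when $j=1$, direct inspection using that the value $1$ only occurs in column $1$) gives $C^{(j)}_{\leq(j-1)q+1}=k(r-j+1)$: the first $k(r-j+1)$ entries of column $j$ are $\leq(j-1)q+1$ while $E_{k(r-j+1)+1,j}\geq(j-1)q+2$. Second, \cref{lemma3.1} together with \cref{lemma3.2} applied to column $j+1$ tells us that values in $[(j-1)q+2,jq+1]$ live only in columns $j$ and $j+1$; that the values in $[(j-1)q+2,(j-1)q+m_j]$ contribute all of their $rk$ copies to column $j$; and that column $j+1$ holds exactly $k(r-j)$ entries with value in $[(j-1)q+m_j+1,jq+1]$. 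Writing $\beta_{l'}$ for the multiplicity of $(j-1)q+l'$ in column $j+1$ for $l'\in\{m_j+1,\ldots,q+1\}$, one therefore has $\sum_{l'=m_j+1}^{q+1}\beta_{l'}=k(r-j)$.

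Adding the column $j$ multiplicities of $(j-1)q+2,\ldots,(j-1)q+l$ on top of the base count telescopes into
\[ C^{(j)}_{\leq(j-1)q+l}\;=\;k(rl-j+1)\;-\;\sum_{l'=m_j+1}^{\min(l,q+1)}\beta_{l'}, \]
with empty sum for $l\leq m_j$. Case (i) then follows directly, with exact equalities on both $C^{(j)}_{\leq(j-1)q+l}$ and $C^{(j)}_{<(j-1)q+l}$. In case (ii), the global bound $\sum_{l'=m_j+1}^{l}\beta_{l'}\leq k(r-j)$ delivers the required lower bound $C^{(j)}_{\leq(j-1)q+l}\geq k(r(l-1)+1)$, while the matching upper bound on $C^{(j)}_{<(j-1)q+l}$ is the displayed identity evaluated at $l-1$. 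The main delicate point is that the individual $\beta_{l'}$ are not determined by standardness and the Richardson conditions, so the argument must use only their global sum, which is the cushion of size $k(r-j)$ supplied by \cref{lemma3.2} for column $j+1$.
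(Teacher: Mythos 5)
Your proof is correct and takes essentially the same route as the paper's: both arguments count occurrences using \cref{lemma3.1} (each value in $[(j-1)q+2,jq+1]$ lives only in columns $j$ and $j+1$), \cref{lemma3.2} (the base count $k(r-j+1)$ for column $j$ and the $k(r-j)$ cushion in column $j+1$) and $T$-invariance (each value occurs $rk$ times). Your cumulative counts $C^{(j)}_{\leq t}$, $C^{(j)}_{<t}$ and the multiplicities $\beta_{l'}$ are just a tidier bookkeeping of the same two-sided bounds the paper derives directly, with the added care of treating the $j=1$ base case explicitly.
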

	\begin{proof}
		Proof of (i):
		Since $E_{1,j+1}\geq (j-1)q+m_j+1,$ by \cref{lemma3.1} every integer between $(j-1)q+2$ and $(j-1)q+m_j$ appears $kr$ times in the $j$-th column. Since by \cref{lemma3.2}
		$E_{k(r-j+1),j} \leq (j-1)q+1,$ for $2 \leq l \leq m_j$ we have 
		$E_{m,j}=(j-1)q+l$ for all $k(r(l-1)-j+1)+1 \leq m \leq k(rl-j+1).$ 
		
		Proof of (ii): Assume that $m_j+1 \leq l \leq q+1.$ 
		Since by (i) $E_{k(m_jr-j+1),j}\leq(j-1)q+m_j$  and $\sum_{t=m_j+1}^{l-1}N_{(j-1)q+t,j} \leq kr(l-m_j-1),$ we have $E_{m,j} \geq (j-1)q+l$ for all $m \geq k(m_jr-j+1)+kr(l-m_j-1)+1=k(r(l-1)-j+1)+1.$ Now it is enough to prove that $E_{m,j} \leq (j-1)q+l$ for all $m \leq k(r(l-1)+1).$ Since $\sum_{t=m_j+1}^l N_{(j-1)q+t,j+1}\leq k(r-j),$ we have $\sum_{t=m_j+1}^l N_{(j-1)q+t,j} \geq kr(l-m_j)-k(r-j).$ Also since $E_{k(m_jr-j+1),j} \leq (j-1)q+m_j,$ we have $E_{m,j} \leq (j-1)q+l$ for all $m \leq k(m_jr-j+1)+kr(l-m_j)-k(r-j)=k(r(l-1)+1).$
	\end{proof}	
	\begin{corollary}\label{corollary3.5}
		Let $j$ be a fixed integer such that $1 \leq j \leq r-1.$ Then for $m_j+2 \leq l \leq q+1,$ $E_{m,j} \text{ is either }  (j-1)q+l-1 \text{ or } (j-1)q+l \text{ for all } k\big(r(l-2)+1\big)+1 \leq m \leq k\big(r(l-1)-j+1\big).$ 	
	\end{corollary}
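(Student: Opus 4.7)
The plan is to deduce the corollary directly from \cref{lemma3.4}(ii) applied at two consecutive values of its parameter, combined with the fact that entries in any fixed column of the standard tableau $\Gamma$ are non-decreasing from top to bottom.

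First I would apply \cref{lemma3.4}(ii) with $l$ replaced by $l-1$: the hypothesis $m_j+2\le l\le q+1$ of the corollary gives $m_j+1\le l-1\le q+1$, so this substitution is legal, and it produces the equality
$$E_{m,j}=(j-1)q+(l-1)\quad\text{for all}\quad k\bigl(r(l-2)-j+1\bigr)+1\le m\le k\bigl(r(l-2)+1\bigr).$$
In particular, $E_{k(r(l-2)+1),\,j}=(j-1)q+l-1$, which pins down the entry immediately above the range specified in the corollary.

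Next I would apply \cref{lemma3.4}(ii) at $l$ itself to obtain $E_{k(r(l-1)-j+1)+1,\,j}=(j-1)q+l$, pinning down the entry immediately below the range. Combining the two equalities with the non-decreasing property of the $j$-th column of $\Gamma$ gives, for every $m$ with $k(r(l-2)+1)+1\le m\le k(r(l-1)-j+1)$, the sandwich
$$(j-1)q+l-1 \;=\; E_{k(r(l-2)+1),\,j} \;\le\; E_{m,j} \;\le\; E_{k(r(l-1)-j+1)+1,\,j} \;=\; (j-1)q+l.$$
Since these two bounds differ by one, $E_{m,j}$ must equal one of them, which is exactly the assertion.

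There is no serious obstacle here; the statement is essentially a gap-filling corollary of \cref{lemma3.4}. The only thing to be mindful of is the index bookkeeping, namely checking that the boundary indices $k(r(l-2)+1)$ and $k(r(l-1)-j+1)+1$ obtained from the two applications of \cref{lemma3.4}(ii) lie respectively immediately above and immediately below the range $k(r(l-2)+1)+1\le m\le k(r(l-1)-j+1)$, which they do by direct inspection.
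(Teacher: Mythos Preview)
Your proposal is correct and follows essentially the same approach as the paper's own proof: both apply \cref{lemma3.4}(ii) at the two consecutive parameter values $l-1$ and $l$ and then use the monotonicity of the $j$-th column to conclude that the entries in the intervening range are trapped between $(j-1)q+l-1$ and $(j-1)q+l$. The paper states this more tersely, but the argument is identical.
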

	\begin{proof}
		By \cref{lemma3.4} we have for $m_j+1 \leq l \leq q+1,$
		$E_{m,j}=(j-1)q+l$ for all $k\big(r(l-1)-j+1\big)+1 \leq m \leq k\big(r(l-1)+1\big).$ Thus, for $m_j+2 \leq l \leq q+1,$ $E_{m,j} \text{ is either }  (j-1)q+l-1 \text{ or } (j-1)q+l \text{ for all } k\big(r(l-2)+1\big)+1 \leq m \leq k\big(r(l-1)-j+1\big).$ 
	\end{proof}
	
	Now we prove a lemma that we use frequently. Before stating the lemma we recall that for $1 \leq j \leq r-1$ and for $2 \leq l \leq q+1,$ every integer $(j-1)q+l$ appears either in the $j$-th column or in the $j+1$-th column of $\Gamma$ (see \cref{lemma3.1}). Also, by \cref{lemma3.2} we have $E_{m,j+1} \leq jq+1$ for all $1 \leq m \leq k(r-j).$
	\begin{lemma}\label{lemma3.6}
		Let $j$ be a fixed integer such that $1 \leq j \leq r-1$ and $l$ be a positive integer such that $3 \leq l \leq q+1.$ If $n_1, n_2$ are non-negative integers such that $n_1 \leq n_2 \leq k(r-j)$ and $\sum_{t=2}^{l-1}N_{(j-1)q+t,j+1} \geq n_1,$ $\sum_{t=2}^{l}N_{(j-1)q+t,j+1} \leq n_2,$ then $E_{m,j}=(j-1)q+l$ for all $k(r(l-1)+1-j)-n_1+1 \leq m \leq k(rl+1-j)-n_2.$ 
		Further, if $l=2$ and $n_2$ is a non-negative integer such that $n_2 \leq k(r-j)$ and $N_{(j-1)q+2,j+1} \leq n_2,$ then $E_{m,j}=(j-1)q+2$ for all $k(r+1-j)+1 \leq m \leq k(2r+1-j)-n_2.$  
	\end{lemma}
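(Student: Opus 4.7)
My plan is to locate exactly the contiguous block of rows of column $j$ of $\Gamma$ that is filled by the integer $(j-1)q+l$, and then to check that this block contains the interval specified in the conclusion. Because column $j$ is weakly increasing, each value in that column occupies a contiguous block of rows, so the whole problem is a matter of counting multiplicities.

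The first step is to pin down the top of column $j$: the claim I would establish is that the top $k(r+1-j)$ entries are precisely those with value $\le (j-1)q+1$, and that the remaining entries run through $(j-1)q+2,\dots,jq+1$ in weakly increasing order. For $j\ge 2$ this is \cref{lemma3.2}(i)--(ii) applied with $j-1$ in place of $j$; for $j=1$ the same conclusion holds because the integer $1$ cannot lie in any column other than the first, so all $rk$ of its copies fill rows $1$ through $rk=k(r+1-j)$. The second step is the count itself: the block occupied by $(j-1)q+s$ in column $j$ begins at row $k(r+1-j)+1+\sum_{t=2}^{s-1}N_{(j-1)q+t,j}$ and ends at row $k(r+1-j)+\sum_{t=2}^{s}N_{(j-1)q+t,j}$. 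By $T$-invariance (\cref{lemma2.2}) combined with \cref{lemma3.1}, each integer $(j-1)q+t$ with $2\le t\le q+1$ appears exactly $rk$ times in total and only in columns $j$ and $j+1$, so $N_{(j-1)q+t,j}=rk-N_{(j-1)q+t,j+1}$. Substituting this identity with $s=l$ and simplifying shows that $(j-1)q+l$ occupies exactly the block
\begin{align*}
\Bigl[\,k\bigl(r(l-1)+1-j\bigr)+1-\sum_{t=2}^{l-1}N_{(j-1)q+t,j+1},\ k\bigl(rl+1-j\bigr)-\sum_{t=2}^{l}N_{(j-1)q+t,j+1}\,\Bigr].
\end{align*}
The third step is immediate: the two hypotheses enlarge this block to contain $[k(r(l-1)+1-j)-n_1+1,\,k(rl+1-j)-n_2]$, which is the statement for $l\ge 3$. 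The $l=2$ case is the degenerate specialization in which the lower sum $\sum_{t=2}^{1}$ is empty (and so equals $0$), so only the upper-bound hypothesis on $N_{(j-1)q+2,j+1}$ is needed, and the starting row is simply $k(r+1-j)+1$.

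The main obstacle is the first step: identifying the top $k(r+1-j)$ rows of column $j$ with exactly the block of entries $\le (j-1)q+1$. Once this is settled, the remainder is a bookkeeping exercise converting column-$j$ multiplicities into column-$(j+1)$ multiplicities via $T$-invariance. The delicate points are the index shift when invoking \cref{lemma3.2} and the short separate argument required at $j=1$, where $m_0$ is undefined.
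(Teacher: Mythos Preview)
Your proposal is correct and follows essentially the same approach as the paper: both arguments use \cref{lemma3.2} (shifted by one index) to locate where the entries $\le (j-1)q+1$ end in column $j$, convert $N_{(j-1)q+t,j}$ to $rk-N_{(j-1)q+t,j+1}$ via \cref{lemma3.1} and $T$-invariance, and then read off the range of rows filled by $(j-1)q+l$. Your explicit treatment of the $j=1$ edge case is in fact slightly more careful than the paper, which cites \cref{lemma3.2} uniformly even though that lemma literally applies only to columns $\ge 2$.
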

	\begin{proof}
		Let $l$ be a positive integer such that $3 \leq l \leq q+1.$ Since $\sum_{t=2}^{l}N_{(j-1)q+t,j+1} \leq n_2,$ we have $\sum_{t=2}^lN_{(j-1)q+t,j} \geq kr(l-1)-n_2.$ Recall that by \cref{lemma3.2}, we have $E_{k(r-j+1),j} \leq (j-1)q+1.$ Thus, $E_{m,j}\leq (j-1)q+l$ for $m \leq kr(l-1)-n_2+k(r-j+1)=k(rl-j+1)-n_2.$ 
		Hence, it is enough to prove that $E_{m,j} \geq (j-1)q+l$ for $m \geq k(r(l-1)+1-j)-n_1+1.$ Since $\sum_{t=2}^{l-1}N_{(j-1)q+t,j+1} \geq n_1,$ we have $\sum_{t=2}^{l-1}N_{(j-1)q+t,j} \leq kr(l-2)-n_1.$ Note that by \cref{lemma3.2}, $E_{k(r-j+1)+1,j} \geq (j-1)q+2.$ Thus, $E_{m,j}\geq (j-1)q+l$ for $m \geq k(r-j+1)+1+kr(l-2)-n_1=k(r(l-1)-j+1)-n_1+1.$
		
		For $l=2,$ the proof is similar.
	\end{proof} 
	\subsection{Combinatorial result}\label{section7}
	
	In this subsection we prove a combinatorial result which we use to prove \cref{theorem8.1}. 
	
	Let $\mathcal{A}$ be the set of all $T$-invariant standard Young tableaux of shape $n \times r$ such that the corresponding standard monomial does not vanish on $X^{v_{\underline{m}}}_{w_{r,n}}.$ For $1 \leq j \leq r-1,$ let $A_j$  be the set of sequences defined by
	$A_j=\{t_{1,j} \leq t_{2,j} \leq \cdots \leq t_{r-j,j}: m_j+1 \leq t_{m,j} \leq q+1 \text{ for all } 1 \leq m \leq r-j\}.$ For $1 \leq j \leq r-1,$ we fix a sequence $ m_{j}+1 \leq t_{1,j} \leq t_{2,j} \leq \cdots \leq t_{r-j,j} \leq q+1.$
	
	For $1 \leq j \leq r,$ let $B_j^{'}$ and $B_j$ be two multisets defined by
	
	$B_j^{'}=\{\underbrace{(j-1)q+2}_{r}, \underbrace{(j-1)q+3}_{r}, \ldots, \underbrace{jq+1}_{r}\}$ 
	
	and
	
	$B_j=B_{j}^{'}\setminus \{(j-1)q+t_{m,j}: 1 \leq m \leq r-j \}$ 
	
	where $\underbrace{t}_r$ denotes that $t$ appears in the set for $r$ times.
	
	Let $\underline{t}:=(t_{1,j} \leq \cdots \leq t_{r-j,j})_{ 1 \leq j \leq r-1}  \in A_1 \times A_2 \times \cdots \times A_{r-1}.$ Consider the Young tableau $\Gamma_{\underline{t}}$ with $E^{\Gamma_{\underline{t}}}_{m,j}$ as its $(m,j)$-th entry and $row_m$ as its $m$-th row defined as follows: 
	\begin{align}E^{\Gamma_{\underline{t}}}_{m,1}=
		\left\{\begin{array}{lr}
			1 & \text{if $1 \leq m \leq r$}\\
			min\{B_1\setminus \{E^{\Gamma_{\underline{t}}}_{l,1}: r+1 \leq l \leq m-1\}\} & \text{if $r+1 \leq m \leq rq+1$}
		\end{array}\right.;
		\label{(4.1)}
	\end{align}
	for $2 \leq j \leq r-1,$ 
	\begin{align}
		E^{\Gamma_{\underline{t}}}_{m,j}=
		\left\{\begin{array}{lr}
			(j-2)q+t_{m,j-1} & \text{if $1 \leq m \leq r-j+1$ }\\
			min\{B_j\setminus \{E^{\Gamma_{\underline{t}}}_{l,j}: r-j+2 \leq l \leq m-1\}\} & \text{if $r-j+2 \leq m \leq rq+1$} 
		\end{array}\right.;
		\label{(4.2)} 
	\end{align} and
	\begin{align}
		E^{\Gamma_{\underline{t}}}_{m,r}=
		\left\{\begin{array}{lr}
			(r-2)q+t_{1,r-1} & \text{if $m=1$}\\
			(r-1)q+l &  \begin{matrix}
				\text{where } l  \text{ is the least positive integer such }\\
				\text{ that } (l-1)r+1 \geq m.
			\end{matrix}
		\end{array}\right.
		\label{(4.3)}
	\end{align}
	
	\underline{Claim:} We claim that $\Gamma_{\underline{t}}$ is $T$-invariant.
	
	Let $N^{\Gamma_{\underline{t}}}_{t,j}$ be the number of boxes in the $j$-th column of $\Gamma_{\underline{t}}$  containing $t.$ By \eqref{(4.1)} we have $N^{\Gamma_{\underline{t}}}_{1,1}=r.$ Note that every element of $B_{j}^{'}$ ($1 \leq j \leq r-1$) is either appearing in $j$-th column or $j+1$-th column. Thus, for  $1 \leq j \leq r-1,$ $N^{\Gamma_{\underline{t}}}_{t,j}+N^{\Gamma_{\underline{t}}}_{t,j+1}=r$ for all $(j-1)q+2 \leq t \leq jq+1.$ By \eqref{(4.3)} $N^{\Gamma_{\underline{t}}}_{t,r}=r$ for all $(r-1)q+2 \leq t \leq rq+1.$ Hence, the claim is proved.
	
	\underline{Claim:} We claim that $\Gamma_{\underline{t}}$ is a standard Young tableau.
	
	Assume that $1 \leq j \leq r-1.$ If $1 \leq m \leq r-j,$ then $E^{\Gamma_{\underline{t}}}_{m,j+1}-E^{\Gamma_{\underline{t}}}_{m,j}=\big( (j-1)q+t_{m,j}\big) - \big( (j-2)q+t_{m,j-1}\big)=q+(t_{m,j}-t_{m,j-1}).$ Since $t_{m,j}-t_{m,j-1} \geq m_j+1-q-1=m_j-q \geq 1-q,$ we have $E^{\Gamma_{\underline{t}}}_{m,j+1}-E^{\Gamma_{\underline{t}}}_{m,j} \geq q+(1-q) = 1.$  If $r-j+1 \leq m \leq rq+1,$ then $E^{\Gamma_{\underline{t}}}_{m,j+1} \geq jq+2$ and  $E^{\Gamma_{\underline{t}}}_{m,j} \leq jq+1.$ Thus, $E^{\Gamma_{\underline{t}}}_{m,j+1}-E^{\Gamma_{\underline{t}}}_{m,j} \geq 1.$ Hence, entries in each row is strictly increasing from left to right. By definition entries in each column is non-decreasing from top to bottom.
	
	\underline{Claim}: We claim that the standard monomial $p_{\Gamma_{\underline{t}}}$ associated to $\Gamma_{\underline{t}}$ does not vanish on $X^{v_{\underline{m}}}_{w_{r,n}}.$ 
	
	Note that by \eqref{(4.1)} we have $E^{\Gamma_{\underline{t}}}_{1,1}=1.$ By \eqref{(4.2)} and \eqref{(4.3)} we have $E^{\Gamma_{\underline{t}}}_{1,j}=(j-2)q+t_{1,j-1} \geq (j-2)q+m_{j-1}+1$ for all $2 \leq j \leq r.$ Hence, $row_1 \geq v_{\underline{m}}.$ By \eqref{(4.1)} and \eqref{(4.2)} for all $1 \leq j \leq r-1,$ $E^{\Gamma_{\underline{t}}}_{n,j} \in B_j$ and by \eqref{(4.3)} we have $E^{\Gamma_{\underline{t}}}_{n,r} \leq rq+1.$ Thus, $E^{\Gamma_{\underline{t}}}_{n,j} \leq jq+1$ for all $1 \leq j \leq r.$ Hence, $row_n \leq w_{r,n}.$ Therefore, the claim is proved.   
	
	Consider the map \begin{center}$f: A_1 \times A_2 \times \cdots \times A_{r-1} \longrightarrow  \mathcal{A}$\end{center} defined by $f(\underline{t})=\Gamma_{\underline{t}},$ where $\underline{t}:=(t_{1,j} \leq \cdots \leq t_{r-j,j})_{ 1 \leq j \leq r-1}.$ 
	
	\begin{lemma}\label{lemma4.1*}
		$f$ is bijective.
	\end{lemma}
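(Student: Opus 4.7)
The plan is to exhibit an explicit inverse $g \colon \mathcal{A} \to A_1 \times \cdots \times A_{r-1}$ and verify that $g \circ f = \mathrm{id}$ and $f \circ g = \mathrm{id}$. Given $\Gamma \in \mathcal{A}$ with entries $E^{\Gamma}_{m,j}$, I would define $g(\Gamma) = \underline{t}$ by
\[
t_{m,j} := E^{\Gamma}_{m,\, j+1} - (j-1)q, \qquad 1 \le m \le r-j,\ 1 \le j \le r-1.
\]
Well-definedness is immediate from \cref{lemma3.2} applied with $k=1$: the bounds $(j-1)q + m_j + 1 \le E^{\Gamma}_{m,j+1} \le jq+1$ translate directly to $m_j + 1 \le t_{m,j} \le q+1$, while non-decreasingness down columns of $\Gamma$ yields $t_{1,j} \le \cdots \le t_{r-j,j}$. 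Hence $g(\Gamma) \in A_1 \times \cdots \times A_{r-1}$.

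That $g \circ f = \mathrm{id}$ is a direct read-off from \eqref{(4.2)} and \eqref{(4.3)}, which encode $t_{m,j}$ as precisely $E^{\Gamma_{\underline{t}}}_{m,j+1} - (j-1)q$. The substance lies in the identity $f \circ g = \mathrm{id}$, i.e.\ that any $\Gamma \in \mathcal{A}$ satisfies $\Gamma_{g(\Gamma)} = \Gamma$; I would argue column by column. Column $1$: since $c_\Gamma(1) = r$ by $T$-invariance (\cref{lemma2.2}) and rows of $\Gamma$ are strictly increasing, the $r$ copies of $1$ must occupy the top of column $1$, matching \eqref{(4.1)}. Columns $2 \le j \le r-1$: by the very definition of $g$, the top $r-j+1$ entries of column $j$ of $\Gamma$ are $(j-2)q + t_{m,j-1}$, precisely as prescribed by \eqref{(4.2)}. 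Column $r$: the first entry agrees by definition, and \cref{corollary3.3} (with $k=1$) forces $E^{\Gamma}_{m,r} = (r-1)q + l$ for $m \ge 2$, with $l$ the least positive integer satisfying $r(l-1)+1 \ge m$, matching \eqref{(4.3)}.

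The remaining and main task, which I expect to be the principal obstacle, is to show that the bottom $rq - r + j$ entries of column $j$ of $\Gamma$ form exactly the multiset $B_j$ in non-decreasing order (for $1 \le j \le r-1$). I would combine three ingredients. First, \cref{lemma3.1} confines integers in $\{(j-1)q+2, \ldots, jq+1\}$ to columns $j$ and $j+1$ of $\Gamma$. Second, $T$-invariance makes each such integer appear exactly $r$ times in $\Gamma$, contributing the multiset $B_j'$ in total to these two columns. Third, \cref{lemma3.2}(ii) pins down the column-$(j+1)$ occurrences of these integers as precisely the top $r-j$ entries, which under $g$ are the multiset $\{(j-1)q + t_{m,j} : 1 \le m \le r-j\}$. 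Subtracting this multiset from $B_j'$ yields exactly $B_j$ as the multiset of the bottom entries of column $j$; non-decreasingness of the column then forces these entries to be listed in increasing order, in agreement with \eqref{(4.1)} and \eqref{(4.2)}. This completes the verification of $f \circ g = \mathrm{id}$ and establishes the bijection.
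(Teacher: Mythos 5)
Your proof is correct and follows essentially the same route as the paper: the paper proves injectivity by reading off $t_{i,j-1}$ from the top entries of each column (your $g\circ f=\mathrm{id}$) and proves surjectivity by reconstructing $\Gamma$ from its top column-$(j{+}1)$ entries using \cref{lemma3.1}, \cref{lemma3.2} and \cref{corollary3.3} (your $f\circ g=\mathrm{id}$). Packaging this as an explicit two-sided inverse, with the multiset count for $B_j$ spelled out, is just a slightly more detailed rendering of the same argument.
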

	
	\begin{proof}
		
		First we prove that $f$ is injective. Let $\underline{t}$ and $\underline{t'}$ be two elements in $A_1 \times A_2 \times \cdots \times A_{r-1}$ such that $f(\underline{t})=f(\underline{t'}).$ Thus, $\Gamma_{\underline{t}}=\Gamma_{\underline{t'}}.$ Hence,   the $(i,j)$-th entry of $\Gamma_{\underline{t}}$ is equal to the $(i,j)$-th entry of $\Gamma_{\underline{t'}}$ for all $1 \leq i \leq r-j+1$ and $2 \leq j \leq r.$ Therefore, $t_{i,j-1}=t'_{i,j-1}$ for all $1 \leq i \leq r-j+1$ and $2 \leq j \leq r.$ Hence, $\underline{t}=\underline{t'}.$
		
		Now we prove that $f$ is surjective. Let $\Gamma \in \mathcal{A}.$ Thus, by \cref{lemma3.2}, for all $1 \leq j \leq r-1$  there exists a sequence $m_j+1 \leq t_{1,j} \leq t_{2,j} \leq \cdots \leq t_{r-j,j}$ such that \begin{align*}E_{m,j+1}=(j-1)q+t_{m,j} \text{ for all } 1 \leq m \leq r-j. 
		\end{align*} 
		We subclaim that for all $1 \leq j \leq r-1,$ every integer in $B_j'\setminus\{E_{m,j+1}: 1 \leq m \leq r-j\}$ appear as $E_{m,j}$ for all $r-j+2 \leq m \leq rq+1.$ By \cref{lemma3.1}, we have every integer between $(j-1)q+2$ and $jq+1$ appears either in the $j$-th column or in the $j+1$-th column of $\Gamma$. Also, by \cref{lemma3.2}, we have $E_{r-j+1,j+1} \geq jq+2.$ Thus, every integer in $B_j'\setminus\{E_{m,j+1}: 1 \leq m \leq r-j\}$ appear as $E_{m,j}$ for all $r-j+2 \leq m \leq rq+1.$ Further, by \cref{corollary3.3}, we have $E_{m,r}=(r-1)q+l,$ where $l$ is the least positive integer such that $(l-1)r+1 \geq m.$ Therefore, $\Gamma=\Gamma_{\underline{t}}$ for some  $\underline{t}=(t_{1,j} \leq t_{2,j} \leq \cdots \leq t_{r-j,j})_{1 \leq j \leq r-1} \in A_1 \times A_2 \times \cdots \times A_{r-1}.$ 
	\end{proof}
	
	\subsection{$R_1$-generation}\label{section4}
	In this subsection we prove that the homogeneous coordinate ring of $T\backslash \backslash (X^{v_{\underline{m}}}_{w_{r,n}})^{ss}_T(\mathcal{L})$ is generated by $H^0(X^{v_{\underline{m}}}_{w_{r,n}}, \mathcal{L})^{T}$ as a $\mathbb{C}$-algebra. 
	
	Let $\tilde{X}= T \backslash \backslash (X^{v_{\underline{m}}}_{w_{r,n}})^{ss}_T(\mathcal{L}).$
	Then we have $\tilde{X}= Proj(R),$ where $R=\bigoplus\limits_{k\in\mathbb{Z}_{\geq 0}}R_k.$ Note that $R_{k}$'s  are finite dimensional vector spaces.
	\begin{theorem}\label{lemma4.1}
		$R$ is generated by $R_1$ as a $\mathbb{C}$-algebra.
	\end{theorem}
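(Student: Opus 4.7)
\emph{The plan is to} show that every basis element of $R_k$---a $T$-invariant standard monomial $p_\Gamma$ of degree $k \geq 2$---factors as a product of $k$ elements of $R_1$. Since the $T$-invariant standard monomials span $R_k$ (by standard monomial theory together with \cref{lemma2.2}), this will establish the theorem.

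\emph{Construction.} Let $\Gamma$ be a $T$-invariant standard Young tableau of shape $nk \times r$ with $p_\Gamma$ non-vanishing on $X^{v_{\underline{m}}}_{w_{r,n}}$. By \cref{lemma3.2}, for each $1 \leq j \leq r-1$ the entries $E_{1,j+1}, \ldots, E_{k(r-j), j+1}$ of column $j+1$ form a non-decreasing sequence with values in $\{(j-1)q + m_j + 1, \ldots, jq+1\}$. I would interleave this sequence into $k$ subsequences of length $r-j$ by setting
$$t^{(i)}_{m,j} := E_{(m-1)k+i,\, j+1} - (j-1)q \qquad (1 \leq i \leq k,\ 1 \leq m \leq r-j,\ 1 \leq j \leq r-1).$$
Each tuple $\underline{t}^{(i)} = (t^{(i)}_{m,j})$ is non-decreasing in $m$ with values in $\{m_j+1,\ldots,q+1\}$, hence lies in $A_1 \times \cdots \times A_{r-1}$. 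Then \cref{lemma4.1*} yields tableaux $\Gamma^{(i)} := \Gamma_{\underline{t}^{(i)}} \in \mathcal{A}$, so that $p_{\Gamma^{(i)}} \in R_1$ for each $i$.

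\emph{The core step} is to verify $p_\Gamma = \prod_{i=1}^k p_{\Gamma^{(i)}}$. Since a standard Young tableau of a fixed shape is uniquely determined by its column multisets (the columns being sorted non-decreasingly, the row assignment is then forced by strict row-increase), it suffices to match the column multisets of $\Gamma$ with the disjoint union of those of $\Gamma^{(1)}, \ldots, \Gamma^{(k)}$. The nontrivial case is column $j+1$ for $1 \leq j \leq r-1$: for $t = (j-1)q + l$ with $m_j + 1 \leq l \leq q+1$, the multiplicity of $t$ in column $j+1$ of $\Gamma$ equals $|\{m \leq k(r-j) : E_{m,j+1} = t\}|$ by \cref{lemma3.2}, while the total multiplicity across the $\Gamma^{(i)}$ equals $\sum_i |\{m \leq r-j : t^{(i)}_{m,j} = l\}|$ by the defining formulas (4.2)--(4.3). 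These counts coincide via the bijection $\{1,\ldots,k\} \times \{1,\ldots,r-j\} \to \{1,\ldots,k(r-j)\}$, $(i,m) \mapsto (m-1)k+i$. The remaining cases---integer $1$ in column $1$, integers in $\{(r-1)q+2,\ldots,rq+1\}$ in column $r$, and the complementary counts in column $j$---all follow from $T$-invariance. \emph{The main obstacle} is this column-multiset bookkeeping, but once the interleaving bijection is in place the verification reduces to a direct count; no Pl\"ucker straightening is required because the factorization is already exact at the level of standard monomials.
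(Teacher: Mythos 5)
Your overall strategy differs from the paper's: you reconstruct $k$ degree-one tableaux abstractly from the column data of $\Gamma$ via the parametrization of \cref{lemma4.1*}, whereas the paper extracts an actual subtableau of rows of $\Gamma$ (rows $k(m-1)+1$ for $m\le r$ and $km$ for $m>r$), so that the factorization $f=f_1f_2$ is exact by construction and only $T$-invariance of the chosen rows has to be verified. The problem with your route is the core step: the reduction of the identity $p_\Gamma=\prod_{i=1}^k p_{\Gamma^{(i)}}$ to matching column multisets is not valid as stated. Equality of monomials in Pl\"ucker coordinates means equality of the multisets of \emph{rows}, and equal column multisets do not force equal row multisets: already for $r=2$ the row sets $\{(1,4),(2,3)\}$ and $\{(1,3),(2,4)\}$ have the same column multisets, while $p_{14}p_{23}\neq p_{13}p_{24}$ (they differ by a Pl\"ucker relation). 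The principle you invoke --- a standard tableau is determined by its sorted columns --- applies to a single standard tableau, i.e.\ to a multichain in $I(r,n)$; to apply it to the disjoint union of the rows of $\Gamma^{(1)},\dots,\Gamma^{(k)}$ you must first know that this union is again a multichain (equivalently, that $\prod_i p_{\Gamma^{(i)}}$ is itself a standard monomial). That is not automatic: for arbitrary members of $\mathcal{A}$ it is false (e.g.\ for $r=3$, $q=2$, $\underline{m}=(1,1)$ the tableaux $\Gamma_{((2,2),(2))}$ and $\Gamma_{((3,3),(3))}$ contain the incomparable rows $(1,2,6)$ and $(1,3,5)$), so it can only hold because of the specific interleaving $t^{(i)}_{m,j}=E_{(m-1)k+i,\,j+1}-(j-1)q$, and that compatibility is exactly what needs to be proved. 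Establishing it (or, equivalently, matching the rows of $\Gamma$ with those of the $\Gamma^{(i)}$ position by position) requires the kind of bookkeeping of where each value sits in each column that the paper carries out with \cref{lemma3.4}, \cref{lemma3.6} and \cref{corollary3.3}; your write-up replaces precisely this hard part by the phrase ``the row assignment is then forced by strict row-increase,'' which is where the argument has a genuine gap.

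To be clear, the factorization you assert does appear to be correct --- in the examples I tested the interleaved factors do multiply to $p_\Gamma$ on the nose --- so your approach is salvageable and would even give a slightly stronger statement (an explicit factorization of every $T$-invariant standard monomial into $k$ elements of $R_1$, compatible with the bijection of \cref{lemma4.1*}). But as written the proof of the key identity is missing, and filling it is not a routine count: you must show, for every pair of adjacent columns, that the pairing of entries inside each $\Gamma^{(i)}$ reproduces, in total, the pairing obtained by globally sorting the columns of $\Gamma$. By contrast, the paper's choice of factor sidesteps this issue entirely, at the cost of the case analysis needed to show the selected rows form a $T$-invariant subtableau.
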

	
	\begin{proof}  Let $f \in R_k$ be a standard monomial and $\Gamma$ be the standard Young tableau associated to $f.$ We claim that $f=f_1\cdot f_2$ for some $f_1$ in $R_1$ and $f_2$ in $R_{k-1}.$ Now it is enough to prove that $\Gamma$ has a subtableau $\Gamma'$ of shape $n \times r$ which is $T$-invariant. Consider the sub-tableau $\Gamma'$ of $\Gamma$ with the following rows: \begin{equation*}row_m(\Gamma')=
			\left\{\begin{array}{lr}
				row_{k(m-1)+1}(\Gamma) & \text{for $1 \leq m \leq r$}\\
				row_{km}(\Gamma) & \text{for $r+1 \leq m \leq n$}\\
			\end{array}\right.
		\end{equation*}
		where $row_{m}(\Gamma)$ is the $m$-th row of $\Gamma.$
		
		Let $E_{i,j}$  be the $(i,j)$-th entry of the tableau $\Gamma$ and $N_{t,j}$ be the number of boxes in the $j$-th column of $\Gamma$  containing $t.$ 
		
		Assume that $1 \leq j \leq r-1.$ Let $1 \leq a_{1,j} \leq r-j$ be the largest positive integer such that $$E_{1,j+1}=E_{k+1,j+1}=\cdots=E_{k(a_{1,j}-1)+1,j+1}.$$ Then by \cref{lemma3.2}, we have $E_{k(a_{1,j}-1)+1,j+1}=(j-1)q+t_{a_{1,j}}$ for some fixed integer $t_{a_{1,j}}$ such that $m_j+1 \leq t_{a_{1,j}} \leq q+1.$ 
		
		Let $a_{1,j} < a_{2,j} \leq r-j$ be the largest positive integer such that $E_{ka_{1,j}+1,j+1}=\cdots=E_{k(a_{2,j}-1)+1,j+1}.$ Then by \cref{lemma3.2}, we have $E_{k(a_{2,j}-1)+1,j+1}=(j-1)q+t_{a_{2,j}}$ for some fixed integer $t_{a_{2,j}}$ such that $t_{a_{1,j}}+1 \leq t_{a_{2,j}} \leq q+1.$ 
		
		Now proceeding in this way we get a sequence of positive integers $1 \leq a_{1,j} < a_{2,j} < \cdots < a_{z_j,j}=r-j$ such that $E_{ka_{i-1,j}+1,j+1}=\cdots=E_{k(a_{i,j}-1)+1,j+1}$ for all $2 \leq i \leq z_j.$ Then by \cref{lemma3.2}, we have $E_{k(a_{i,j}-1)+1,j+1}=(j-1)q+t_{a_{i,j}}$ for some fixed integer $t_{a_{i,j}}$ such that $t_{a_{i-1,j}}+1 \leq t_{a_{i,j}} \leq q+1.$ 
		
		Now we prove that for $1 \leq j \leq r-1$ and $2 \leq l \leq q+1,$ every integer $(j-1)q+l$ appears exactly $r$ times in $\Gamma'.$ We prove this case by case. 
		\begin{itemize}
			\item[Case (i):] Assume that $l=2.$ 
			\begin{itemize}
				\item[Subcase (1):] If $t_{a_{1,j}} =2,$ then we prove that $(j-1)q+2$ appears $r-a_{1,j}$ times in the $j$-th column and $a_{1,j}$ times in the $j+1$-th column of $\Gamma'.$  
				\item[Subcase (2):] If $t_{a_{1,j}} >2,$ then we prove that $(j-1)q+2$ appears exactly $r$ times in the $j$-th column of $\Gamma'$.
			\end{itemize}
			
			\item[Case (ii):] Assume that $1 \leq i \leq z_j$ and $t_{a_{i-1,j}}+1\leq l \leq t_{a_{i,j}}-1$ such that $l \neq 2.$ Then $(j-1)q+l$ appears exactly $r$ times in the $j$-th column of $\Gamma',$ where we define $a_{0,j}=0$ and $t_{a_{0,j}}=1$. 
			\item[Case (iii):]  Assume that $1 \leq i \leq z_j$ and $l = t_{a_{i,j}}(\neq 2).$ Then we prove that $(j-1)q+t_{a_{i,j}}$ appears $r-(a_{i,j}-a_{i-1,j})$ times in the $j$-th column and $a_{i,j}-a_{i-1,j}$ times in the $j+1$-th column of $\Gamma'.$ 
			\item[Case (iv):] Assume that $t_{a_{z_{j},j}}+1 \leq l\leq q+1.$ Then we prove that $(j-1)q+l$ appears exactly $r$ times in the $j$-th column of $\Gamma'$. 
		\end{itemize}
		Now we prove in Case (i).
		
		Subcase (1): Assume that $t_{a_{1,j}}=2.$
		
		Since $t_{a_{1,j}} \geq m_j+1 \geq 2,$ we have $m_j=1.$ Thus, by \cref{lemma3.4}(ii) we have $E_{u,j}=(j-1)q+2$ for all $k(r-j+1)+1 \leq u \leq k(r+1).$ Hence, $E_{k(m-1)+1,j}=(j-1)q+2  \text{ for all } r-j+2 \leq m \leq r.$
		
		By the above paragraph, we have $E_{k(r+1),j}=(j-1)q+2.$ Now it is enough to prove that $E_{u,j} \leq (j-1)q+2$ for $u \leq k(2r-a_{1,j}-j+1).$  Since $E_{ka_{1,j}+1,j+1}=(j-1)q+t_{a_{2,j}},$ we have $N_{(j-1)q+2,j+1} \leq ka_{1,j}.$ Thus, $N_{(j-1)q+2,j} \geq k(r-a_{1,j}).$ Since $E_{k(r-j+1),j} \leq (j-1)q+1$ (see \cref{lemma3.2}), we have $E_{u,j}\leq (j-1)q+2$ for $u \leq k(r-j+1)+k(r-a_{1,j})=k(2r-j-a_{1,j}+1).$ Hence, $E_{km,j}=(j-1)q+2  \text{ for all }  r+1 \leq m \leq 2r-a_{1,j}-j+1.$
		
		Recall that $E_{k(m-1)+1,j+1}=(j-1)q+t_{a_{1,j}}$ for all $1 \leq m \leq a_{1,j}.$
		
		Therefore, Subcase (1) follows from above three paragraphs.
		
		Subcase (2): Assume that $t_{a_{1,j}}>2.$
		
		Since by \cref{lemma3.2}(ii) $E_{k(r-j+1)+1,j} \geq (j-1)q+2$, it is enough to prove that $E_{u,j} \leq (j-1)q+2$ for all $u \leq k(2r-j+1).$ Since $t_{a_{1,j}}>2,$ we have $(j-1)q+2$ appears $kr$ times in the $j$-th column. Since by \cref{lemma3.2}(i) $E_{k(r-j+1),j} \leq (j-1)q+1$, we have $E_{u,j}\leq(j-1)q+2$ for $u \leq k(r-j+1)+kr=k(2r-j+1).$ Therefore, $E_{k(m-1)+1,j}=(j-1)q+2  \text{ for all } r-j+2 \leq m \leq r$ and $E_{km,j}=(j-1)q+2  \text{ for all }  r+1 \leq m \leq 2r-j+1.$ Therefore, Subcase (2) is proved.
		
		Now we prove in Case (ii).
		
		Assume that $1 \leq i \leq z_j$ and $t_{a_{i-1,j}}+1\leq l \leq t_{a_{i,j}}-1$ such that $l \neq 2.$ 
		Since $E_{ka_{i-1,j}+1,j+1}=(j-1)q+t_{a_{i,j}}$, we have $\sum_{t=2}^lN_{(j-1)q+t,j+1} \leq ka_{i-1,j}.$ Also, since $E_{k(a_{i-1,j}-1)+1,j+1}=(j-1)q+t_{a_{i-1,j}},$ we have $\sum_{t=2}^{l-1}N_{(j-1)q+t,j+1} \geq k(a_{i-1,j}-1)+1.$ Now consider $n_1=k(a_{i-1,j}-1)+1$ and $n_2=ka_{i-1,j}.$ Then by using \cref{lemma3.6}, we have $$E_{u,j}=(j-1)q+l \text{ for all } k(r(l-1)-a_{i-1,j}-j+2) \leq u \leq k(rl-a_{i-1,j}-j+1).$$ Therefore, Case (ii) is proved.
		
		Now we prove in Case (iii). Assume that $l=t_{a_{i,j}} (\neq 2).$ 
		
		Since $E_{ka_{i,j}+1,j+1}=(j-1)q+t_{a_{i+1,j}}$, we have $\sum_{t=2}^{t_{a_{i,j}}}N_{(j-1)q+t,j+1} \leq ka_{i,j}.$ Also, since $E_{k(a_{i-1,j}-1)+1,j+1}=(j-1)q+t_{a_{i-1,j}}$, we have $\sum_{t=2}^{t_{a_{i,j}}-1}N_{(j-1)q+t,j+1} \geq k(a_{i-1,j}-1)+1.$ Now consider $n_1=k(a_{i-1,j}-1)+1$ and $n_2=ka_{i,j}.$ Then by using \cref{lemma3.6}, we have $$E_{u,j}=(j-1)q+t_{a_{i,j}} \text{ for all } k(r(t_{a_{i,j}}-1)-a_{i-1,j}-j+2) \leq u \leq k(rt_{a_{i,j}}-a_{i,j}-j+1).$$ Hence, $E_{km,j}=(j-1)q+t_{a_{i,j}} \text{ for all } r(t_{a_{i,j}}-1)-a_{i-1,j}-j+2 \leq u \leq rt_{a_{i,j}}-a_{i,j}-j+1.$ 
		
		Note that  $E_{k(m-1)+1,j+1}=(j-1)q+t_{a_{i,j}}$ for all $a_{i-1,j}+1 \leq m \leq a_{i,j}.$ 
		
		Now we prove in Case (iv).
		
		Assume that $t_{a_{z_{j},j}}+1 \leq l\leq q+1.$
		By \cref{lemma3.2}, we have $\sum_{t=2}^lN_{(j-1)q+t,j+1} \leq k(r-j).$ 
		Since $E_{k(r-j-1)+1,j+1}=(j-1)q+t_{a_{z_j,j}},$ we have $\sum_{t=2}^{l-1}N_{(j-1)q+t,j+1} \geq k(r-j-1)+1.$ Now consider $n_1=k(r-j-1)+1$ and $n_2=k(r-j).$ Then by using \cref{lemma3.6}, we have $$E_{u,j}=(j-1)q+l \text{ for all } k(r(l-2)+2) \leq u \leq k(r(l-1)+1).$$ Hence, $E_{km,j}=(j-1)q+l \text{ for all } r(l-2)+2 \leq m \leq r(l-1)+1.$ Therefore, Case (iv) is proved.
		
		Now we prove that every integer in $\{(r-1)q+2,(r-1)q+3, \ldots, rq+1\}$ appears $r$ times in the $r$-th column of $\Gamma'.$ By \cref{corollary3.3}, we have $E_{k(m-1)+1,r}=(r-1)q+2$ for all $2 \leq m \leq r$ and $E_{k(r+1),r}=(r-1)q+2.$ For $3 \leq l \leq q+1,$ $E_{km,r}=(r-1)q+l$ for all $r(l-2)+2 \leq m \leq r(l-1)+1.$ Therefore, for $2 \leq l \leq q+1,$ every integer $(r-1)q+l$ appears $r$ times in the $r$-th column of $\Gamma'.$ 
	\end{proof}
	
	\section{Restriction of the $T$-invariant sections on the Deodhar component}\label{section5}
	
	In this section we study the Deodhar component $\mathcal{R}^{v_{\underline{m}}}_{w_{r,n}}$ in the Richardson variety $X^{v_{\underline{m}}}_{w_{r,n}}.$ Also, we find the common factor of the restriction of all $T$-invariant standard monomials to $\mathcal{R}^{v_{\underline{m}}}_{w_{r,n}}$ and the non-common factor of the restriction of a $T$-invariant standard monomial to $\mathcal{R}^{v_{\underline{m}}}_{w_{r,n}}.$ 
	
	In this section we use the terminology ''common factor" of a set of monomials and "non-common factor" of a monomial in a set of monomials. A "common factor" of a set of monomials is the greatest common divisor of the monomials in the set and the "non-common factor" of a monomial in the set is the resultant monomial divided by the common factor. Note that the common factor of the monomials in a set is unique.
	
	Let $U^{*}_{-\alpha_{i}}=U_{-\alpha_{i}} \setminus \{1\}$ and let $c_{i,j}$ be the coordinate function on $j$-th appearance of $U^{*}_{-\alpha_{i}}$. To state \cref{lemma5.1} first we introduce an $n \times r$ matrix. Let $e_{l,j}$ be the $(l,j)$-th entry of the matrix. Let the first column of the matrix be  
	\begin{equation}e_{l,1}=
		\left\{\begin{array}{lr}
			1 & \text{for $l=1$}\\
			\prod_{t=1}^{l-1}c_{t,1} & \text{for $2 \leq l \leq q+1$}\\
			0 & \text{for $q+2 \leq l \leq qr+1$}
		\end{array}\right..
		\label{(5.1)}
	\end{equation} 
	Let the $j$-th ($2 \leq j \leq r$) column of the matrix be
	\small{\begin{equation}
			e_{l,j}=\left\{\begin{array}{lr}
				0 & \text{for $1 \leq l \leq (j-2)q+m_{j-1}$}\\
				1 & \text{for $l=(j-2)q+m_{j-1}+1$}\\
				\begin{matrix}\displaystyle\sum_{\small{
							t=(j-2)q
							+m_{j-1}+1}}^{l-1}\bigg(\big(\prod_{\small{
							m=(j-2)q+m_{j-1}+1}}^{t-1}c_{m,2}\big)\big(\prod_{m=t}^{l-1}c_{m,1}\big)\bigg)\\+\displaystyle\prod_{\small{
							t=(j-2)q+m_{j-1}+1}}^{l-1}c_{t,2} \end{matrix} & \text{for $\begin{matrix}(j-2)q+m_{j-1}+2 \leq l\\
						\leq (j-1)q+1 \end{matrix}$}\\
				\displaystyle \big(\prod_{t=(j-2)q+m_{j-1}+1}^{(j-1)q}c_{t,2}\big)\big(\prod_{t=(j-1)q+1}^{l-1}c_{t,1}\big) & \text{for $\begin{matrix}(j-1)q+2 \leq l  \leq jq+1\end{matrix}$}\\
				0 & \text{for $jq+2 \leq l \leq rq+1$}
			\end{array}\right..
			\label{(5.2)}
	\end{equation}}
	
	\normalsize{}	\begin{lemma}\label{lemma5.1}
	The first $r$ columns of matrices $u$ for which  $uP^{\alpha_r}/P^{\alpha_r}$ belong to the Deodhar component $\mathcal{R}^{v_{\underline{m}}}_{w_{r,n}}$ are precisely the ones that are described in \ref{(5.1)} and \ref{(5.2)}. 
	\end{lemma}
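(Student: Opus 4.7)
The plan is to pin down the unique positive distinguished subexpression (PDS) $\mathbf{v}^+$ of $v_{\underline{m}}$ in a convenient reduced expression of $w_{r,n}$, apply the Marsh--Rietsch parametrization of $\mathbf{G}^{\mathbf{v}^+}_{\mathbf{w}}$ to obtain explicit representatives of points of $\mathcal{R}^{v_{\underline{m}}}_{w_{r,n}}$, and then compute the first $r$ columns of such a representative by a direct right-to-left matrix calculation.

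Fix the reduced expression $\mathbf{w} := (s_q s_{q-1}\cdots s_1)(s_{2q}s_{2q-1}\cdots s_2)\cdots (s_{rq}s_{rq-1}\cdots s_r)$ of $w_{r,n}$. We claim that, in $\mathbf{v}^+$, the entire first block of $\mathbf{w}$ lies in $J^{\Box}_{\mathbf{v}^+}$, and for $1\le i\le r-1$ inside block $i+1$ of $\mathbf{w}$ the first $2q-m_i$ letters (namely $s_{(i+1)q},\ldots,s_{(i-1)q+m_i+1}$) lie in $J^{\Box}_{\mathbf{v}^+}$ while the remaining letters $s_{(i-1)q+m_i},\ldots,s_{i+1}$ lie in $J^{\circ}_{\mathbf{v}^+}$. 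That the $J^{\circ}$-letters, read left to right, multiply to $v_{\underline{m}}$ follows from the explicit formula for $v_{\underline{m}}$ recalled in the introduction. The PDS condition $\mathbf{v}^+_{(l-1)}<\mathbf{v}^+_{(l-1)}s_{i_l}$ at each position is verified by induction on the block: at the start of block $i+1$ the running product equals $\prod_{k=1}^{i-1}(s_{(k-1)q+m_k}\cdots s_{k+1})$, whose one-line notation is a truncation of the one-line notation of $v_{\underline{m}}$, and one checks directly that every simple reflection occurring in block $i+1$ meets the letter-pair increasing condition $\mathbf{v}^+_{(l-1)}(i_l)<\mathbf{v}^+_{(l-1)}(i_l+1)$.

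With the PDS identified, \cite[Definition 5.1]{marsh} (noting $J^{\bullet}_{\mathbf{v}^+}=\emptyset$ since $\mathbf{v}^+$ is positive) represents a general element of $\mathbf{G}^{\mathbf{v}^+}_{\mathbf{w}}$ as a product $g=g_1 g_2\cdots g_{l(w_{r,n})}$, where $g_l=y_{i_l}(p_l)$ for $l\in J^{\Box}_{\mathbf{v}^+}$ (with $p_l\in\mathbb C^*$) and $g_l=s_{i_l}$ for $l\in J^{\circ}_{\mathbf{v}^+}$; the parameter of the $j$-th occurrence (reading left to right) of $y_i$ is labelled $c_{i,j}$, in accordance with the paper's convention. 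The first $r$ columns of $g$ are the vectors $ge_1,\ldots,ge_r$, obtained by applying the factors of $g$ to $e_j$ from right to left using $y_k(c)e_k=e_k+c\,e_{k+1}$, $y_k(c)e_j=e_j$ for $j\neq k$, $s_k e_k=e_{k+1}$, $s_k e_{k+1}=\pm e_k$, and $s_k e_j=e_j$ for $j\notin\{k,k+1\}$. A crucial simplification is that for $j\le r$ every factor lying in blocks $1,\ldots,j-1$ of $\mathbf{w}$ fixes $e_j$, so only the factors in blocks $j,j+1,\ldots,r$ of $\mathbf{w}$ actually contribute to the $j$-th column; a block-by-block bookkeeping then assembles the coefficients into exactly the patterns \eqref{(5.1)} and \eqref{(5.2)}, the separation between $c_{t,1}$ and $c_{t,2}$ in \eqref{(5.2)} reflecting the fact that each $y_t$ appears in at most two blocks of the PDS.

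The main obstacle is purely organizational: verifying the PDS claim requires a length check at each of the $l(w_{r,n})$ positions, and the column computation demands careful tracking of which appearance of $y_i$ enters which matrix entry; no new structural ideas beyond patient bookkeeping are needed. By \cite[Theorem 4.2]{marsh} the parametrization $(\mathbb C^*)^{|J^{\Box}_{\mathbf{v}^+}|}\to\mathbf{G}^{\mathbf{v}^+}_{\mathbf{w}}$ is a bijection onto a choice of representatives of $\mathcal{R}^{v_{\underline{m}}}_{w_{r,n}}$, so every matrix $u$ with $uP^{\alpha_r}/P^{\alpha_r}\in\mathcal{R}^{v_{\underline{m}}}_{w_{r,n}}$ has, in this parametrization, first $r$ columns precisely of the form \eqref{(5.1)}--\eqref{(5.2)}, completing the proof.
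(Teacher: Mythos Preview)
Your overall strategy is the paper's—identify the PDS of $v_{\underline{m}}$ in $\mathbf{w}$, apply the Marsh--Rietsch parametrization, and compute the first $r$ columns by right-to-left matrix action—but one key sentence is inverted and, as written, creates a genuine gap. You claim that ``every factor lying in blocks $1,\ldots,j-1$ of $\mathbf{w}$ fixes $e_j$,'' so that only blocks $j,\ldots,r$ contribute to column $j$. This is backwards. Acting right to left, the blocks $r,r-1,\ldots,j+1$ are applied \emph{first}, and it is \emph{these} that fix $e_j$ (all of their indices exceed $j$). Block $j$ then sends $e_j$ through its $s$-part to $e_{(j-2)q+m_{j-1}+1}$, and its $y$-part spreads support into rows $[(j-2)q+m_{j-1}+1,\,jq+1]$. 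After that, block $j-1$ does \emph{not} fix the resulting vector: the upper portion of its $y$-part, namely the factors $y_t$ with $t\in[(j-2)q+m_{j-1}+1,\,(j-1)q]$, acts nontrivially, and these are precisely the factors supplying the $c_{t,1}$'s that appear in \eqref{(5.2)} (block $j$ supplies the $c_{t,2}$'s for those same indices $t$, and the $c_{t,1}$'s for $t\ge (j-1)q+1$). Only blocks $1,\ldots,j-2$ and the lower portion of block $j-1$ fix the result. Thus column $j$ is determined by block $j$ together with the top of block $j-1$; this is exactly what the paper uses after first commuting the $s$-letters to the right (they commute past the later $y$-blocks), factoring out $v_{\underline{m}}$, and then computing column $v_{\underline{m}}(j)=(j-2)q+m_{j-1}+1$ of the remaining product of $U^{*}_{-\alpha_i}$'s.

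The rest of your outline is fine and matches the paper: the PDS you write down agrees with the paper's subexpression, your observation that each $y_t$ appears in at most two blocks is exactly what produces the $c_{t,1}/c_{t,2}$ labelling, and the Marsh--Rietsch bijection justifies the word ``precisely.'' Correct the direction of the block claim and explicitly account for the contribution from the top of block $j-1$, and the argument goes through.
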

	\begin{proof}
		Consider the Deodhar component of $X^{v_{\underline{m}}}_{w_{r,n}}$ corresponding to the subexpression $v_{\underline{m}}\\=\underbrace{1 \ldots 1}_{q}\prod_{j=2}^{r} (\underbrace{1 \ldots 1}_{2q-m_{j-1}}s_{(j-2)q+m_{j-1}}\cdots s_{j+1}s_j).$
		The Deodhar component $\mathcal{R}^{v_{\underline{m}}}_{w_{r,n}}$ in $X^{v_{\underline{m}}}_{w_{r,n}}$ is\\ $$\bigg(\prod_{i=q}^{1}U^{*}_{-\alpha_{i}}\bigg)\bigg(\prod_{j=2}^{r}\big(\prod_{i=jq}^{(j-2)q+m_{j-1}+1}U^{*}_{-\alpha_{i}}\big)\bigg)v_{\underline{m}}P^{\alpha_r}/P^{\alpha_r}.$$ 
		
		Since the one line notation of $v_{\underline{m}}$ is $(1, m_1+1, q+m_2+1, \ldots, (r-2)q+m_{r-1}+1)$, we have to consider first column, $m_1+1$-th column, $q+m_2+1$-th column, $\ldots,$ $(r-2)q+m_{r-1}+1$-th column for computing restriction of the Pl\"{u}cker coordinates to $\mathcal{R}^{v_{\underline{m}}}_{w_{r,n}}$. Therefore, $n \times r$ matrices formed by these $r$ columns in $\mathcal{R}^{v_{\underline{m}}}_{w_{r,n}}$ are of the following form. For simplicity of notation we consider column $(j-2)q+m_{j-1}+1$ as column $j$ for all $2 \leq j \leq r.$ 
		
		Since the product $\prod_{j=2}^{r}\big(\prod_{i=jq}^{(j-2)q+m_{j-1}+1}U^{*}_{-\alpha_{i}}\big)$ has $(1,1)$-th entry $1$ and $(l,1)$-th ($l \geq 2$) entry zero, we have $(l,1)$-th $(2 \leq l \leq q+1)$ entry of $\big(\prod_{i=q}^{1}U^{*}_{-\alpha_i}\big)\big(\prod_{j=2}^{r}(\prod_{i=jq}^{(j-2)q+m_{j-1}+1}U^{*}_{-\alpha_{i}})\big)$ is equal to the $(l,1)$-th $(2 \leq l \leq q+1)$ entry of the product $\prod_{i=q}^{1}U^{*}_{-\alpha_i}.$ Hence, the first column of the matrix is as given in \ref{(5.1)}. 
		
		Let $2 \leq j \leq r.$ Since the product $\prod_{m=j+1}^{r}\big(\prod_{i=mq}^{(m-2)q+m_{l-1}+1}U^{*}_{-\alpha_{i}}\big)$ has $((j-2)q+m_{j-1}+1,(j-2)q+m_{j-1}+1)$-th entry $1$ and $(l,(j-2)q+m_{j-1}+1)$-th ($l \leq (j-2)q+m_{j-1}$ and $l \geq (j-2)q+m_{j-1}+2$) entry zero, we have $(l,(j-2)q+m_{j-1}+1)$-th entry $((j-2)q+m_{j-1}+2 \leq l \leq jq+1)$ of $\big(\prod_{i=q}^{1}U^{*}_{-\alpha_i}\big)\big(\prod_{j=2}^{r}(\prod_{i=jq}^{(j-2)q+m_{j-1}+1}U^{*}_{-\alpha_{i}})\big)$ is equal to the $(l,(j-2)q+m_{j-1}+1)$-th entry $((j-2)q+m_{j-1}+2 \leq l \leq jq+1)$ of the product $\big(\prod_{i=(j-1)q}^{(j-2)q+m_{j-1}+1}U^{*}_{-\alpha_i}\big)\big(\prod_{i=jq}^{(j-2)q+m_{j-1}+1}U^{*}_{-\alpha_i}\big).$ Hence, the $j$-th column of the matrix is as given in \ref{(5.2)}.
	\end{proof}
	
	\begin{example}
		Let $(n,r,q)=(10,3,3).$ Consider $w_{3,10}=(s_3s_2s_1)(s_6s_5s_4s_3s_2)(s_9s_8s_7s_6s_5s_4s_3).$ Take $v_{(2,2)}=(s_2)(s_5s_4s_3).$ Then $v_{(2,2)}$ as a subexpression in $w_{3,10}$ is $(1.1.1)(1.1.1.1.s_2)(1.1.1.1.s_5\\s_4s_3).$ Then the  Deodhar component $\mathcal{R}^{v_{(2,2)}}_{w_{3,10}}$ is $U^{*}_{-\alpha_3}U^{*}_{-\alpha_2}U^{*}_{-\alpha_1}U^{*}_{-\alpha_6}U^{*}_{-\alpha_5} U^{*}_{-\alpha_4}U^{*}_{-\alpha_3}U^{*}_{-\alpha_9}U^{*}_{-\alpha_8}U^{*}_{-\alpha_7}\\U^{*}_{-\alpha_6}(s_2)(s_5s_4s_3)P^{\alpha_3}/P^{\alpha_3}.$
		
		Since the one line notation of $v_{(2,2)}$ is $(1,3,6),$ we have to consider the first, third and sixth column for computing restriction of the Pl\"{u}cker coordinates to $\mathcal{R}^{v_{(2,2)}}_{w_{3,10}}$. Therefore $10 \times 3$ matrices formed by these $3$ columns in $\mathcal{R}^{v_{(2,2)}}_{w_{3,10}}$ are of the following form: 
		$$
		\begin{pmatrix}
			1 & 0 & 0 \\
			c_{11} & 0 & 0\\
			c_{11}c_{21} & 1 & 0\\
			c_{11}c_{21}c_{31} & c_{31}+c_{32} & 0\\
			0 & c_{32}c_{41} & 0\\
			0 & c_{32}c_{41}c_{51} & 1\\
			0 & c_{32}c_{41}c_{51}c_{61} & c_{61}+c_{62}\\
			0 & 0 & c_{62}c_{71}\\
			0 & 0 & c_{62}c_{71}c_{81}\\
			0 & 0 & c_{62}c_{71}c_{81}c_{91}
		\end{pmatrix}.
		$$
	\end{example}
	\begin{lemma}\label{lemma5.3}
		Let $p_{i_1,i_2\ldots,i_r}$ be a Pl\"{u}cker coordinate dividing a $T$-invariant standard monomial. Let $M$ be a matrix in the Deodhar component $\mathcal{R}^{v_{\underline{m}}}_{w_{r,n}}.$ If  $i_{j-1}$ and $i_{j}$ do not simultaneously lie in the interval $[(j-2)q+m_{j-1}+1,(j-1)q+1]$ for all $2 \leq j \leq r,$ then $p_{i_1,i_2,\ldots,i_r}(M)$ is the product of diagonals.  
	\end{lemma}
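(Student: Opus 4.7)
My plan is to directly compute the $r \times r$ minor of the matrix $M$ described by \cref{lemma5.1} indexed by rows $i_1 < i_2 < \cdots < i_r$, whose determinant equals $p_{i_1,\ldots,i_r}(M)$. I will show that in the Leibniz expansion
\[
\det = \sum_{\sigma \in S_r} \mathrm{sgn}(\sigma) \prod_{b=1}^r M_{i_{\sigma(b)},\, b}
\]
only the identity permutation contributes under the hypothesis, so that $\det$ equals the product of the diagonal entries $\prod_{b=1}^r M_{i_b,b}$.

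First I would record the support pattern of the columns of $M$. From \eqref{(5.1)} and \eqref{(5.2)}, column $j$ of $M$ is supported on the interval $S_j := [(j-2)q + m_{j-1} + 1,\, jq + 1]$ (with the convention $m_0 := 0$, so that $S_1 = [1,q+1]$). The key observation is that $S_j \cap S_k = \emptyset$ whenever $|j-k| \geq 2$, since for $k \geq j+2$ the interval $S_k$ begins at $(k-2)q + m_{k-1}+1 \geq jq + 2$. Moreover, because $p_{i_1,\ldots,i_r}$ divides a standard monomial on $X^{v_{\underline{m}}}_{w_{r,n}}$, we have $v_{\underline{m}} \leq (i_1,\ldots,i_r) \leq w_{r,n}$, which translates to $i_b \in S_b$ for every $b$; in particular the diagonal entries $M_{i_b,b}$ are nonzero.

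Next I would argue that for any $\sigma \neq \mathrm{id}$ contributing a nonzero term, $|\sigma(b) - b| \leq 1$ for all $b$. Indeed, nonvanishing of $M_{i_{\sigma(b)},b}$ requires $i_{\sigma(b)} \in S_b$, while we already have $i_{\sigma(b)} \in S_{\sigma(b)}$, so $i_{\sigma(b)} \in S_b \cap S_{\sigma(b)}$; by the disjointness above, $|\sigma(b)-b| \leq 1$. A standard elementary argument (short induction: if $\sigma(1) = 2$ then necessarily $\sigma(2) = 1$, so peel off the adjacent transposition $(1\ 2)$ and recurse) shows that any permutation of $\{1,\ldots,r\}$ satisfying this property is a product of disjoint adjacent transpositions.

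Finally I would rule out every adjacent transposition $(b,b+1)$ appearing in $\sigma$. Its presence would force both $i_b$ and $i_{b+1}$ to lie in $S_b \cap S_{b+1} = [(b-1)q + m_b + 1,\, bq + 1]$, and setting $j = b+1$ this is precisely the interval $[(j-2)q + m_{j-1}+1,\, (j-1)q + 1]$ that the hypothesis forbids. Hence no non-trivial $\sigma$ survives, and $p_{i_1,\ldots,i_r}(M) = \prod_{b=1}^r M_{i_b,b}$, the product of diagonals. I do not expect a serious obstacle; the only care needed is the boundary case $j = 1$ (handled by $m_0 := 0$) and the verification of support disjointness for non-adjacent columns, both of which are immediate from the explicit matrix description in \cref{lemma5.1}.
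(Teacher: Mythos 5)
Your proof is correct and follows essentially the same route as the paper: you use the explicit column supports from \cref{lemma5.1} together with the bounds $v_{\underline{m}}\leq (i_1,\ldots,i_r)\leq w_{r,n}$ to see that the relevant $r\times r$ submatrix is tridiagonal, and then use the hypothesis to kill the off-diagonal contributions. The only (inessential) difference is bookkeeping: you classify the surviving permutations in the Leibniz expansion as products of disjoint adjacent transpositions, whereas the paper expands along the first row and argues by induction that in each adjacent pair one of the two off-diagonal entries vanishes.
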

	\begin{proof}
		Let $e_{k,j}$ be the $(k,j)$-th entry of the matrix $M.$ 
		Since $p_{i_1,i_2,\ldots, i_r}(M)$ is the determinant of the submatrix of $M$ with $i_1$-th row, $i_2$-th row, $\ldots,$ $i_r$-th row, we have $$p_{i_1,i_2,\ldots, i_r}(M)=\begin{vmatrix}
			e_{i_1,1} & e_{i_1,2} & \cdots & e_{i_1,r}\\
			e_{i_2,1} & e_{i_2,2} & \cdots & e_{i_2,r}\\
			\cdot & \cdot & \cdots & \cdot\\
			e_{i_r,1} & e_{i_r,2} & \cdots & e_{i_r,r}\\
		\end{vmatrix}.$$ 
		Since the one line notation of $v_{\underline{m}}$ and $w_{r,n}$ are $(1, m_1+1, q+m_2+1, \ldots, (r-2)q+m_{r-1}+1)$ and $(q+1, 2q+1, 3q+1, \ldots, rq+1)$ respectively, we have $p_{i_1,i_2,\ldots, i_r}(M) \neq 0$ if and only if 
		$1 \leq i_1 \leq q+1$ and
		$(l-2)q+m_{l-1}+1 \leq i_l \leq lq+1$ for all $2 \leq l \leq r.$
		(see \cite[Lemma 9, p.667]{BL}). Also by \eqref{(5.1)} and \eqref{(5.2)} we have for $1 \leq l \leq r$  \begin{equation}\label{5.4}e_{i_l,j} = 0 \text{ if } 
			\left\{\begin{array}{lr}
				i_l \geq q+2 & \text{for $j=1$}\\
				\text{either } i_l  \leq (j-2)q+m_{j-1} \text{ or } i_l \geq jq+2 & \text{ for $2 \leq j \leq r$}
			\end{array}
			\right..
		\end{equation}
		Since for $3 \leq l \leq r$ we have $i_l \geq (l-2)q+m_{l-1}+1 \geq q+2.$ Thus, by \eqref{5.4} $e_{i_l,1}=0$ for all $3 \leq l \leq r.$ Further, for $4 \leq l \leq r,$ $i_l \geq (l-2)q+m_{l-1}+1 \geq 2q+2.$ Thus, $e_{i_l,2}=0$ for all $ 4 \leq l \leq r.$
		Consider $3 \leq j \leq r.$ For $l \leq j-2$ we have $i_l \leq lq+1 \leq (j-2)q+1.$ Thus by \eqref{5.4} we have $e_{i_l,j}=0$ for all $l \leq j-2$. For $l \geq j+2$ we have $i_l \geq (l-2)q+m_{l-1}+1 \geq jq+m_{j+1}+1 \geq jq+2.$ Thus by \eqref{5.4} we have $e_{i_l,j}=0$ for all $l \geq j+2$.
		
		Also by the given hypothesis for $2 \leq j \leq r,$ $i_{j-1}$ and $i_{j}$ do not simultaneously lie in the interval $[(j-2)q+m_{j-1}+1,(j-1)q+1].$ Thus, we have either $i_{j-1} \leq (j-2)q+m_{j-1}$ or $i_j \geq (j-1)q+2.$ Therefore, by \eqref{5.4} we have either $e_{i_{j-1},j}=0$ or $e_{i_j,j-1}=0.$ 
		
		Therefore, $p_{i_1,i_2,\ldots,i_r}(M)$ is the determinant of the following $r \times r$ submatrix $A=$ 
		$$\begin{pmatrix}
			e_{i_1,1}&e_{i_1,2}&0&\cdots&0&0&0&\cdots&0\\
			e_{i_2,1}&e_{i_2,2}&e_{i_2,3}&\cdots&0&0&0&\cdots&0\\
			0&e_{i_3,2}&e_{i_3,3}&\cdots&0&0&0&\cdots&0\\
			\vdots&\vdots&\vdots&\cdots&\vdots&\vdots&
			\vdots&\cdots&\vdots\\
			0&0&0&\cdots&e_{i_{j-1},j-1}&e_{i_{j-1},j}&0&\cdots&0\\
			0&0&0&\cdots&e_{i_{j},j-1}&e_{i_{j},j}&e_{i_{j},j+1}&\cdots&0\\
			0&0&0&\cdots&0&e_{i_{j+1},j}&e_{i_{j+1},j+1}&\cdots&0\\
			\vdots&\vdots&\vdots&\cdots&\vdots&\vdots&\vdots&
			\cdots&\vdots\\
			0&0&0&\cdots&0&0&0&\cdots&e_{i_{r},r}
		\end{pmatrix}$$ \\where for all $2 \leq j \leq r$ either $e_{i_{j-1},j}$ is zero or $e_{i_j,j-1}$ is zero. Note that $p_{i_1,i_2,\ldots, i_r}(M)=e_{1,1}\cdot A_{1,1}-e_{1,2}\cdot A_{1,2},$ where $A_{1,1}$ (respectively, $A_{1,2}$) is the minor corresponding to $e_{i_1,1}$ (respectively, $e_{i_1,2}$). If $e_{i_1,2}=0,$ then $p_{i_1,i_2,\ldots, i_r}(M)=e_{i_1,1}\cdot A_{1,1}.$ If $e_{i_1,2} \neq 0,$ then $e_{i_2,1}=0.$ Thus, all the entries in the first column of $A_{1,2}$ is zero. Hence, in both cases $p_{i_1,i_2,\ldots, i_r}(M)=e_{i_1,1}\cdot A_{1,1}.$ Note that the $(r-1) \times (r-1)$ submatrix $A_{1,1}$ of $A$ has the same form as $A.$ Hence, by induction $p_{i_1,i_2,\ldots, i_r}(M)$ is the product of diagonal entries.		
	\end{proof}
	\begin{lemma}\label{lemma5.4}
		Let $p_{i_1,i_2\ldots,i_r}$ be a Pl\"{u}cker coordinate dividing a $T$-invariant standard monomial.  Then $i_{j-1}$ and $i_{j}$ do not simultaneously lie in the interval $[(j-2)q+m_{j-1}+1,(j-1)q+1]$ for all $2 \leq j \leq r$.  
	\end{lemma}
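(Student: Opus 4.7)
The plan is to argue by contradiction: assume there exists $2 \leq j \leq r$ with both $i_{j-1}$ and $i_j$ lying in $I_{j-1} := [(j-2)q+m_{j-1}+1, (j-1)q+1]$, and let $m$ denote the row index of the row $(i_1, \ldots, i_r)$ inside the standard Young tableau $\Gamma$ associated to the given $T$-invariant standard monomial. The aim is to sandwich $m$ between two incompatible bounds obtained by applying \cref{lemma3.2} twice: once to column $j$ (for an upper bound) and once to column $j-1$ (for a lower bound).

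For the upper bound, apply \cref{lemma3.2} with its parameter taken to be $j-1$: part (ii) gives $E_{k(r-j+1)+1,j} \geq (j-1)q+2$, and since entries in each column are non-decreasing, $i_j \leq (j-1)q+1$ forces $m \leq k(r-j+1)$. For the lower bound, note that $i_{j-1} \geq (j-2)q+m_{j-1}+1 \geq (j-2)q+2$ using $m_{j-1}\geq 1$. In the generic range $3 \leq j \leq r$, apply \cref{lemma3.2} with parameter $j-2$ to obtain $E_{k(r-j+2)+1,j-1}\geq (j-2)q+2$, which by column monotonicity yields $m \geq k(r-j+2)+1$.

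The main point needing extra care is the boundary case $j=2$, where \cref{lemma3.2} is not directly applicable to column $1$. The plan there is to argue by direct counting: \cref{lemma3.2}(i) with $j=1$ gives $E_{i,2}\geq m_1+1 \geq 2$, so $1$ cannot occur in column $2$; by \cref{lemma3.1} with $j=1$, every occurrence of $1$ therefore lies in column $1$, and $T$-invariance (\cref{lemma2.2}) forces exactly $rk$ copies of $1$, which fill rows $1$ through $rk$ at the top of column $1$. Hence $i_1 \geq 2$ forces $m \geq rk+1$, which is exactly $k(r-j+2)+1$ at $j=2$, so the bound matches the uniform formula. Combining the two bounds yields $k(r-j+2)+1 \leq m \leq k(r-j+1)$; since $k(r-j+2)+1 = k(r-j+1)+k+1 > k(r-j+1)$, this is a contradiction, proving the lemma.
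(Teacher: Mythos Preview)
Your argument is correct and follows the same overall scheme as the paper: derive incompatible bounds on the row index $m$ from columns $j$ and $j-1$. The upper bound via \cref{lemma3.2} with parameter $j-1$ is exactly what the paper does. For the lower bound, however, the paper invokes \cref{lemma3.4} applied to column $j-1$: since $i_{j-1}\ge (j-2)q+m_{j-1}+1$, \cref{lemma3.4} forces $m\ge k(rm_{j-1}-j+2)+1$, and this works uniformly for all $2\le j\le r$ with no boundary case. You instead reuse \cref{lemma3.2}, this time with parameter $j-2$, exploiting only the cruder inequality $i_{j-1}\ge (j-2)q+2$; this suffices but forces the separate treatment of $j=2$. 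Your route is a bit more economical in the lemmas it uses, while the paper's avoids the case split by using the sharper information about $m_{j-1}$. You also carry the degree $k$ explicitly, whereas the paper's proof is written with $k=1$.

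Two small presentational points. First, in the range $3\le j\le r$, knowing $E_{k(r-j+2)+1,\,j-1}\ge (j-2)q+2$ alone (part~(ii)) does not by itself pin down the threshold; you really need part~(i) of \cref{lemma3.2} with parameter $j-2$, namely $E_{i,\,j-1}\le (j-2)q+1$ for all $i\le k(r-j+2)$, to deduce $m\ge k(r-j+2)+1$. Second, for $j=2$ your citations are slightly off: \cref{lemma3.2}(i) only bounds $E_{i,2}$ for $i\le k(r-1)$, and \cref{lemma3.1} with $j=1$ concerns the integers $2,\dots,q+1$, not~$1$. The clean argument is simply that by \eqref{(6.1*)} one has $E_{1,j}\ge (j-2)q+m_{j-1}+1\ge 2$ for every $j\ge 2$, so by column monotonicity $1$ occurs only in column~$1$; then $c_\Gamma(1)=rk$ forces $E_{i,1}=1$ for $1\le i\le rk$, as you say.
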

	\begin{proof}
		Assume that $2 \leq j \leq r-1.$ 
		By \cref{lemma3.2}, we have $E_{r-j+1,j} \leq (j-1)q+1$ and $E_{m,j} \geq (j-1)q+2$ for all $m \geq r-j+2.$ 
		Also, by \cref{lemma3.4}, we have $E_{m,j-1} \leq (j-2)q+m_{j-1}$ for all $m \leq rm_{j-1}-j+2$ and $E_{m,j-1} \geq (j-2)q+m_{j-1}+1$ for all $m \geq rm_{j-1}-j+3.$ Thus, if $(j-2)q+m_{j-1}+1 \leq i_{j-1} < i_j \leq (j-1)q+1,$ then $i_{j-1}=E_{l,j-1}$ for some $l \geq rm_{j-1}-j+3$ and $i_j=E_{k,j}$ for some $k \leq r-j+1,$ which is a contradiction to the fact that $i_{j-1}$ and $i_{j}$ are in the same row.
		
		Now we assume that $j=r.$ By \cref{lemma3.2}, we have $E_{1,r} \leq (r-1)q+1$ and $E_{i,r} \geq (r-1)q+2$ for all $i \geq 2.$ By \cref{lemma3.4}, we have $E_{m,r-1} \leq (r-2)q+m_{r-1}$ for all $m \leq rm_{r-1}-r+2$ and $E_{m,r-1} \geq (r-2)q+m_{r-1}+1$ for all $m \geq rm_{r-1}-r+3.$ Thus, if $(r-2)q+m_{r-1}+1 \leq i_{r-1} < i_r \leq (r-1)q+1,$ then $i_{r-1}=E_{l,r-1}$ for some $l \geq rm_{r-1}-r+3$ and $i_r=E_{k,r}$ for some $k \leq 1,$ which is a contradiction to the fact that $i_{r-1}$ and $i_{r}$ are in the same row.
	\end{proof}
	\begin{corollary}\label{corollary4.5}
		Let $p_{i_1,i_2\ldots,i_r}$ be a Pl\"{u}cker coordinate dividing a $T$-invariant standard monomial. Let $M$ be a matrix in the Deodhar component $\mathcal{R}^{v_{\underline{m}}}_{w_{r,n}}.$ Then $p_{i_1,i_2,\ldots,i_r}(M)$ is the product of diagonals.
	\end{corollary}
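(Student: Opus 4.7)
The plan is to deduce the corollary as an immediate consequence of the two preceding lemmas, with no new computation needed. By \cref{lemma5.4}, any Pl\"ucker coordinate $p_{i_1,\ldots,i_r}$ that divides a $T$-invariant standard monomial on $X^{v_{\underline{m}}}_{w_{r,n}}$ automatically satisfies the property that $i_{j-1}$ and $i_j$ never simultaneously lie in the interval $[(j-2)q+m_{j-1}+1,(j-1)q+1]$ for any $2\leq j\leq r$. This is precisely the hypothesis of \cref{lemma5.3}, so one may apply that lemma to $M\in\mathcal{R}^{v_{\underline{m}}}_{w_{r,n}}$ and conclude that $p_{i_1,\ldots,i_r}(M)$ equals the product of the diagonal entries of the $r\times r$ submatrix of $M$ formed from the rows $i_1,\ldots,i_r$.

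Since the nontrivial content has already been packaged into the two lemmas, there is essentially no obstacle in the corollary itself. The two lemmas play complementary roles: \cref{lemma5.3} is the matrix-algebra step, identifying when the determinant of the selected submatrix of a Deodhar matrix telescopes to a product of diagonals by using the zero pattern dictated by \eqref{5.4} to eliminate the anti-diagonal contribution inductively (first column expansion reduces to the analogous submatrix of size $r-1$). In contrast, \cref{lemma5.4} is the combinatorial compatibility check, ensuring that every Pl\"ucker factor appearing in a $T$-invariant standard monomial meets that algebraic hypothesis; this relies on the row and column distribution worked out in \cref{section3}, notably the bounds from \cref{lemma3.2} and \cref{lemma3.4} placing entries of columns $j-1$ and $j$ on opposite sides of the relevant interval. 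Concatenating the two yields the corollary without further work; in the write-up I would simply invoke \cref{lemma5.4} to certify the hypothesis and then cite \cref{lemma5.3}.
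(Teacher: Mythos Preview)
Your proposal is correct and matches the paper's own proof exactly: the paper also simply invokes \cref{lemma5.4} to verify the interval condition and then applies \cref{lemma5.3} to conclude. There is nothing to add.
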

	
	\begin{proof}
		By \cref{lemma5.4}, we have $i_{j-1}$ and $i_{j}$ do not simultaneously lie in the interval $[(j-2)q+m_{j-1}+1,(j-1)q+1]$ for all $2 \leq j \leq r.$ Thus, by \cref{lemma5.3}, we have $p_{i_1,i_2,\ldots,i_r}(M)$ is the product of diagonals.
	\end{proof}
	
	To proceed further we set up the following terminology for a $T$-invariant standard monomial (or the corresponding standard Young tableau) which we use frequently. 
	
	Let $f=p_{\tau_1}\cdots p_{\tau_l}$ be a standard monomial and $\Gamma_{f}$ be the corresponding standard Young tableau. Fix an integer $1 \leq m \leq l.$ Let $\tau_m=(i_1, i_2, \ldots, i_r).$ Then by \cref{corollary4.5}, $p_{\tau_m}(M)$ is the product of $(i_j,j)$-th entries of $M,$ where $1 \leq j \leq r.$  Then by "the restriction of the $j$-th entry of the $m$-th Pl\"{u}cker coordinate to $\mathcal{R}^{v_{\underline{m}}}_{w_{r,n}}$" we mean $"(i_j,j)$-th entry of $\mathcal{R}^{v_{\underline{m}}}_{w_{r,n}}.$"
	
	To state \cref{lemma3.8} and \cref{lemma6.2} first we consider the following monomials in $c_{t,j}$'s.
	\begin{itemize}
		\item[(i)]	\begin{align}
			\bigg(\prod_{l=1}^{m_1}\big(\prod_{t=1}^{l-1}c_{t,1}\big)^r\bigg)\bigg(\prod_{l=m_1+1}^{q+1}\prod_{t=1}^{l-1}c_{t,1}\bigg)\bigg(\prod_{l=m_1+2}^{q+1}\big(\prod_{t=1}^{l-2}c_{t,1}\big)^{r-1}\bigg)
			\label{6.1}
		\end{align} 
		\item[(ii)]	\begin{align}
			\begin{split}
				\bigg(\prod_{l=2}^{m_j}\big(\prod_{t=(j-2)q+m_{j-1}+1}^{(j-1)q}c_{t,2}\prod_{t=(j-1)q+1}^{(j-1)q+l-1}c_{t,1}\big)^{r}\bigg)\bigg(\prod_{l=m_j+1}^{q+1}\big(\prod_{t=(j-2)q+m_{j-1}+1}^{(j-1)q}c_{t,2}\prod_{t=(j-1)q+1}^{(j-1)q+l-1}c_{t,1}\big)^{j}\bigg)\\\bigg(\prod_{l=m_j+2}^{q+1}\big(\prod_{t=(j-2)q+m_{j-1}+1}^{(j-1)q}c_{t,2}\prod_{t=(j-1)q+1}^{(j-1)q+l-2}c_{t,1}\big)^{r-j}\bigg)
			\end{split}
			\label{6.2}
		\end{align}
		\item[(iii)]  \begin{align}
			\prod_{l=2}^{q+1}\big(\prod_{t=(r-2)q+m_{r-1}+1}^{(r-1)q}c_{t,2}\prod_{t=(r-1)q+1}^{(r-1)q+l-1}c_{t,1}\big)^r
			\label{6.3}
		\end{align} 
		\item[(iv)] \begin{align}
			\displaystyle\prod_{j=1}^{r-1}\prod_{m=1}^{r-j}\bigg(\big(\prod_{l=t_{m,j}}^qc_{(j-1)q+l,1}\big)\big(\sum_{z=m_j+1}^{t_{m,j}-1}(\prod_{l=m_j+1}^{z-1}c_{(j-1)q+l,2}\prod_{l=z}^{t_{m,j}-1}c_{(j-1)q+l,1})+ \prod_{l=m_j+1}^{t_{m,j}-1}c_{(j-1)q+l,2}\big)\bigg).
			\label{6.6}
		\end{align} 
	\end{itemize}
	
	\begin{lemma}\label{lemma3.8}
		The common factor of the restriction of all $T$-invariant standard monomials to $\mathcal{R}^{v_{\underline{m}}}_{w_{r,n}}$ is the product of the polynomials appearing in \eqref{6.1}, \eqref{6.2} and \eqref{6.3}.
	\end{lemma}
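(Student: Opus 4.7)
The plan is to compute each $T$-invariant standard monomial's restriction to $\mathcal{R}^{v_{\underline{m}}}_{w_{r,n}}$ explicitly and then read off the common monomial factor from the calculation. By \cref{lemma4.1}, $R$ is generated by $R_1$, so it suffices to take the common factor at degree one; by the bijection in \cref{lemma4.1*} the degree-one $T$-invariant standard monomials are exactly the $p_{\Gamma_{\underline{t}}}$ for $\underline{t} \in A_1\times\cdots\times A_{r-1}$. By \cref{corollary4.5} each such restriction equals the product $\prod_{m=1}^n\prod_{j=1}^r e_{E^{\Gamma_{\underline{t}}}_{m,j},j}$ of the diagonal entries of the matrix of \cref{lemma5.1}, described by \eqref{(5.1)} and \eqref{(5.2)}.

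I would then split matrix column $j$ into a top block (rows $1,\dots,r-j+1$, whose entries lie in the range $[(j-2)q+m_{j-1}+1,(j-1)q+1]$ by \cref{lemma3.2}) and a bottom block (rows $r-j+2,\dots,n$, values in $[(j-1)q+2,jq+1]$). Each bottom entry is a pure monomial
$$e_{(j-1)q+l',j}=\prod_{t=(j-2)q+m_{j-1}+1}^{(j-1)q}c_{t,2}\prod_{t=(j-1)q+1}^{(j-1)q+l'-1}c_{t,1},$$
whereas each top entry is either $1$ (when $t_{m,j-1}=m_{j-1}+1$) or the sum in \eqref{(5.2)}. A key initial observation is that no single variable $c_{\cdot,1}$ or $c_{\cdot,2}$ divides this sum: the pure $c_{\cdot,1}$ summand contains no $c_{\cdot,2}$, the pure $c_{\cdot,2}$ summand contains no $c_{\cdot,1}$, and each intermediate summand omits at least one variable of each type. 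Consequently the $c$-variable exponents of $p_{\Gamma_{\underline{t}}}|_{\mathcal{R}^{v_{\underline{m}}}_{w_{r,n}}}$ come entirely from the product of bottom entries.

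Next I would compute these bottom exponents column by column. Let $n^{(j)}_{l'}$ denote the multiplicity of $(j-1)q+l'$ in column $j$; by \cref{lemma3.4} and \cref{corollary3.5} we have $n^{(j)}_{l'}=r$ for $l'\in[2,m_j]$ and $n^{(j)}_{l'}=r-a^{(j)}_{l'}$ for $l'\in[m_j+1,q+1]$, where $a^{(j)}_{l'}:=|\{m:t_{m,j}=l'\}|$ satisfies $\sum_{l'}a^{(j)}_{l'}=r-j$. The exponent of $c_{(j-1)q+t',1}$ in column $j$'s bottom contribution is $\sum_{l'\geq t'+1}n^{(j)}_{l'}$, and the sum constraint forces the minimum over $\underline{t}$ to be $r(q-t')+j$ for every $t'\in[1,q]$, attained simultaneously by the choice $t_{m,j}=q+1$ for all $(m,j)$. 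The variable $c_{t,2}$ for $t\in[(j-2)q+m_{j-1}+1,(j-1)q]$ lies in every bottom row of column $j$ and no other column, giving the fixed exponent $r(q-1)+j$. Rewriting \eqref{6.2} as $\prod_{l'=2}^{q}e_{(j-1)q+l',j}^{\,r}\cdot e_{jq+1,j}^{\,j}$ shows that its variable exponents match these minima via the identity $r(q+1-t')-(r-j)=r(q-t')+j$; the boundary cases $j=1$ (no $c_{\cdot,2}$ variables) and $j=r$ (no column $r+1$, so all $n^{(r)}_{l'}=r$ by \cref{corollary3.3}) yield \eqref{6.1} and \eqref{6.3} by exactly the same count.

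To see that the common factor is not strictly larger, I would note that the choice $t_{m,j}=m_{j}+1$ for all $(m,j)$ makes every top sum equal to $1$, so the corresponding restriction is a pure monomial and any common factor must itself be a monomial whose variable exponents are at most those computed above. The main obstacle is the simultaneous minimisation: one has to keep track of how the same ambiguous region (length $r-j$ in column $j$) affects the exponents of several $c_{(j-1)q+t',1}$ variables at once and verify that the single extreme choice $t_{m,j}=q+1$ really does minimise all of them, rather than requiring different choices of $\underline{t}$ for different $t'$; once this is checked the remainder of the argument reduces to elementary counting.
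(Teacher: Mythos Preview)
Your argument is correct. The paper organises the computation differently: rather than splitting each column into a top block (sums) and a bottom block (pure monomials) and then minimising variable exponents over $\underline{t}$, the paper walks through the tableau row by row within each column, invoking \cref{lemma3.4} and \cref{corollary3.5} to identify which rows have $E_{m,j}$ forced to a single value (these contribute the full factor $e_{(j-1)q+l,j}$) and which rows have $E_{m,j}$ free to be either $(j-1)q+l-1$ or $(j-1)q+l$ (these contribute only the smaller divisor $e_{(j-1)q+l-1,j}$). Your route makes two points more explicit than the paper does. First, your observation that no single variable divides the sum in \eqref{(5.2)}, combined with the pure-monomial restriction at the choice $t_{m,j}=m_j+1$, shows directly that the common factor must be a monomial; the paper leaves this implicit. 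Second, by exhibiting the single choice $t_{m,j}=q+1$ that simultaneously minimises every $c_{(j-1)q+t',1}$ exponent, you verify that the common factor is \emph{exactly} the claimed product, whereas in the paper this upper bound is effectively completed only in tandem with \cref{lemma6.2}. The underlying combinatorics is the same---your multiplicities $n^{(j)}_{l'}$ encode precisely the row counts the paper extracts from \cref{lemma3.4} and \cref{corollary3.5}, and your rewriting of \eqref{6.2} as $\prod_{l'=2}^{q}e_{(j-1)q+l',j}^{\,r}\cdot e_{jq+1,j}^{\,j}$ is the same regrouping---so the difference is one of bookkeeping rather than substance.
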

	
	\begin{proof}
		By \cref{lemma3.4}, we have for every $1 \leq l \leq m_1,$ $E_{m,1}=l$ for all $(l-1)r+1 \leq m \leq lr.$ Hence,  for $1 \leq l \leq m_1$ and $(l-1)r+1 \leq m \leq lr,$ the restriction of the first entry of the $m$-th  Pl\"{u}cker coordinate to $\mathcal{R}^{v_{\underline{m}}}_{w_{r,n}}$ is $\prod_{t=1}^{l-1}c_{t,1}$ (see \eqref{(5.1)}). By \cref{lemma3.4}, we have $E_{(l-1)r+1,1}=l$ for all  $m_1+1 \leq l \leq q+1.$ Hence, for all $m_1+1 \leq l \leq q+1$ the restriction of the first entry of the $(l-1)r+1$-th Pl\"{u}cker coordinate to $\mathcal{R}^{v_{\underline{m}}}_{w_{r,n}}$ is $\prod_{t=1}^{l-1}c_{t,1}$ (see \eqref{(5.1)}). Also, By \cref{corollary3.5}, we have for every $m_1+2 \leq l \leq q+1,$ $E_{m,1}$ is either $l-1$ or $l$ for all $(l-2)r+2 \leq m \leq (l-1)r.$ Thus, for $m_1+2 \leq l \leq q+1$ and $(l-2)r+2 \leq m \leq (l-1)r,$ the restriction of the first entry of the $m$-th Pl\"{u}cker coordinate  to $\mathcal{R}^{v_{\underline{m}}}_{w_{r,n}}$ is divisible by $\prod_{t=1}^{l-2}c_{t,1}$ (see \eqref{(5.1)}). Therefore, the common factor corresponding to the first column is the monomial in \eqref{6.1}.

		Let us consider $2 \leq j \leq r-1.$ By \cref{lemma3.4}, we have for all $2 \leq l \leq m_j,$ $E_{m,j}=(j-1)q+l$ for all $(l-1)r-j+2 \leq m \leq lr-j+1.$ Hence, for $2 \leq l \leq m_j$ and $(l-1)r-j+2 \leq m \leq lr-j+1,$ the restriction of the $j$-th entry of the $m$-th Pl\"{u}cker coordinate to $\mathcal{R}^{v_{\underline{m}}}_{w_{r,n}}$ is $\big(\prod_{t=(j-2)q+m_{j-1}+1}^{(j-1)q}c_{t,2}\big)\big(\prod_{t=(j-1)q+1}^{(j-1)q+l-1}c_{t,1}\big)$ (see \eqref{(5.2)}). By \cref{lemma3.4}, for all $m_j+1 \leq l \leq q+1$ we have $E_{m,j}=(j-1)q+l$ for $(l-1)r-j+2 \leq m \leq (l-1)r+1$. Hence, for $m_j+1 \leq l \leq q+1$ and $(l-1)r-j+2 \leq m \leq (l-1)r+1,$ the restriction of the $j$-th entry of the $m$-th Pl\"{u}cker coordinate to $\mathcal{R}^{v_{\underline{m}}}_{w_{r,n}}$ is $\big(\prod_{t=(j-2)q+m_{j-1}+1}^{(j-1)q}c_{t,2}\big)\big(\prod_{t=(j-1)q+1}^{(j-1)q+l-1}c_{t,1}\big)$ (see \eqref{(5.2)}). Also by \cref{corollary3.5}, we have for $m_j+2 \leq l \leq q+1,$ $E_{m,j}$ is either $(j-1)q+l-1$ or $(j-1)q+l$ for all $(l-2)r+2 \leq m \leq (l-1)r-(j-1)$. Hence, for $m_j+2 \leq l \leq q+1$ and $(l-2)r+2 \leq m \leq (l-1)r-(j-1),$ the restriction of the $j$-th entry of the $m$-th Pl\"{u}cker coordinate to $\mathcal{R}^{v_{\underline{m}}}_{w_{r,n}}$ is divisible by $\big(\prod_{t=(j-2)q+m_{j-1}+1}^{(j-1)q} c_{t,2}\big)
		\big(\prod_{t=(j-1)q+1}^{(j-1)q+l-2}c_{t,1}\big)$ (see \eqref{(5.2)}). Therefore, the common factor corresponding to the $j$-th column is the monomial in \eqref{6.2}. 
		
		By \cref{corollary3.3}, we have for all $2 \leq l \leq q+1$ we have $E_{(l-2)r+m,r}=(r-1)q+l$ for all $2 \leq m \leq r+1.$ Therefore, the common factor corresponding to the $r$-th column is the monomial in \eqref{6.3}.
	\end{proof}
	
	\begin{lemma}\label{lemma6.2}
		The non-common factor of the restriction of a $T$-invariant standard monomial to $\mathcal{R}^{v_{\underline{m}}}_{w_{r,n}}$ is appearing in \eqref{6.6}. 
	\end{lemma}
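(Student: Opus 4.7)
The plan is to combine the parameterisation from \cref{lemma4.1*} with the matrix description of $\mathcal{R}^{v_{\underline{m}}}_{w_{r,n}}$ given in \cref{lemma5.1}. Fix a $T$-invariant standard tableau $\Gamma_{\underline{t}}$ corresponding to a sequence $\underline{t}=(t_{1,j}\leq\cdots\leq t_{r-j,j})_{1\leq j\leq r-1}$. By \cref{corollary4.5}, the restriction of $p_{\Gamma_{\underline{t}}}$ to a point $M$ of $\mathcal{R}^{v_{\underline{m}}}_{w_{r,n}}$ is the product over all rows $m$ and all columns $j$ of the $(E^{\Gamma_{\underline{t}}}_{m,j},j)$-entry of $M$ as given by \eqref{(5.1)} and \eqref{(5.2)}. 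The strategy is to organise this product column by column and divide out the common factor of \cref{lemma3.8} to read off what remains.

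For each column $j$, I separate the top block (rows $1\leq m\leq r-j+1$ for $j\geq 2$, and $1\leq m\leq r$ for $j=1$) from the bottom block (rows filled by $B_j$ in ascending order). The first $r$ entries of column $1$ are all $1$ and contribute nothing. For $2\leq j\leq r$ the top block entries are $(j-2)q+t_{m,j-1}$; evaluating \eqref{(5.2)} at $l=(j-2)q+t_{m,j-1}$ and substituting $t=(j-2)q+z$ yields precisely the second (sum-plus-product) factor of \eqref{6.6} with index $j_0=j-1$. Thus the top blocks of columns $2,\ldots,r$ together account for all the sum-plus-product factors in \eqref{6.6}.

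For the bottom block of column $j$ with $1\leq j\leq r-1$, the multiplicity of the value $(j-1)q+l$ equals $r-n_{l,j}$ where $n_{l,j}=|\{m:t_{m,j}=l\}|$, and by \eqref{(5.2)} the matrix entry at row $(j-1)q+l$ of column $j$ factorises as $A_j\cdot B_{l,j}$ with $A_j=\prod_{t=(j-2)q+m_{j-1}+1}^{(j-1)q}c_{t,2}$ (with $A_1=1$) and $B_{l,j}=\prod_{t=(j-1)q+1}^{(j-1)q+l-1}c_{t,1}$. A direct exponent computation shows that the $A_j$-contribution from the bottom block cancels exactly with the $c_{t,2}$-part of the common factor from column $j$, while the excess exponent of $c_{(j-1)q+s,1}$ equals $(r-j)-M_{s,j}$, where $M_{s,j}=|\{m:t_{m,j}>s\}|$. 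This is precisely the exponent of $c_{(j-1)q+s,1}$ in the first factor $\prod_{m=1}^{r-j}\prod_{l=t_{m,j}}^{q}c_{(j-1)q+l,1}$ of \eqref{6.6} indexed by $j_0=j$. Column $r$ is $\underline{t}$-independent in its bottom block by \cref{corollary3.3} and matches \eqref{6.3} on the nose.

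Multiplying the contributions over $j=1,\ldots,r-1$ reassembles exactly \eqref{6.6}. The main technical obstacle is the exponent bookkeeping in the preceding paragraph: one must verify that the $c_{(j-1)q+s,1}$-exponent inside the common factor from column $j$ collapses to the uniform value $r(q-s)+j$ for $s\leq q-1$ and $j$ for $s=q$. This is handled by a short case-split on whether $s$ lies in $[1,m_j-1]$, equals $m_j$, lies in $[m_j+1,q-1]$, or equals $q$, and then summing contributions from the three products appearing in \eqref{6.2} (with the analogous split via \eqref{6.1} handling column $1$). Once these identities are in place, the excess in each column lines up term by term with the corresponding factor of \eqref{6.6}, completing the proof.
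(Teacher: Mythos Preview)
Your argument is correct, and it takes a genuinely different route from the paper. The paper does not invoke \cref{lemma4.1*}; instead it fixes an arbitrary $\Gamma\in\mathcal{A}$, extracts the numbers $t_{m,j}$ from the top block of column $j+1$, and then determines the entries in the bottom block of column $j$ by a detailed case analysis (splitting the range of $l$ according to the breakpoints $t_{b_{i,j},j}$ and repeatedly applying \cref{lemma3.6}). This produces the product \eqref{(6.4)} in a ``grouped'' form indexed by the $b_{i,j}$'s, and the paper then needs the appendix \cref{lemma6.1} to rewrite it as $\prod_{m=1}^{r-j}\prod_{l=t_{m,j}}^{q}c_{(j-1)q+l,1}$. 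Your approach short-circuits all of this: by identifying $\Gamma$ with $\Gamma_{\underline{t}}$ via \cref{lemma4.1*} you already know from \eqref{(4.1)}--\eqref{(4.3)} that the bottom block of column $j$ is exactly the multiset $B_j$ in ascending order, so the multiplicity of each value and hence every exponent is a direct count. What the paper's route buys is that it stays self-contained within the entry-by-entry lemmas of Subsection~\ref{subsection3.1}; what your route buys is the elimination of both the three-case analysis and \cref{lemma6.1}, at the cost of the exponent bookkeeping you sketch (verifying that the common-factor exponent of $c_{(j-1)q+s,1}$ in \eqref{6.1}/\eqref{6.2} collapses to $r(q-s)+j$), which is indeed a routine case-split.
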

	
	\begin{proof}   Let $f \in R_1$ be a standard monomial and $\Gamma$ be the standard Young tableau associated to $f.$ Then $\Gamma$ has $n$ rows and $r$ columns. Let $E_{i,j}$  be the $(i,j)$-th entry of the tableau $\Gamma$ and $N_{t,j}$ be the number of boxes in the $j$-th column of $\Gamma$ containing $t.$
		
		Let $j$ be a fixed integer such that $1 \leq j \leq r-1.$ By \cref{lemma3.2}, we have $(j-1)q+m_j+1 \leq E_{m,j+1} \leq jq+1$ for all $1 \leq m \leq r-j.$
		Thus, for any $1 \leq m \leq r-j,$ $E_{m,j+1}=(j-1)q+t_{m,j}$ for some fixed integer $t_{m,j}$ such that $m_j+1 \leq t_{m,j} \leq q+1.$ 
		
		Let $1 \leq b_{1,j} \leq r-j$ be the largest positive integer such that $$E_{1,j+1}=E_{2,j+1}=\cdots=E_{b_{1,j},j+1}. $$  
		Since $E_{m,j+1}=(j-1)q+t_{m,j}$ for all $1 \leq m \leq b_{1,j}$ and $E_{b_{1,j},j+1}=(j-1)q+t_{b_{1,j},j},$ we have $t_{m,j}=t_{b_{1,j},j}$ for all $1 \leq m \leq b_{1,j}.$  
		
		Let $b_{1,j} < b_{2,j} \leq r-j$ be the largest positive integer such that $E_{b_{1,j}+1,j+1}=\cdots=E_{b_{2,j},j+1}.$  Then we have $E_{b_{2,j},j+1}= (j-1)q+t_{b_{2,j},j}$ for some fixed integer $t_{b_{2,j},j}$ such that $t_{b_{1,j},j}+1 \leq t_{b_{2,j},j} \leq q+1.$  Since $E_{m,j+1}=(j-1)q+t_{m,j}$ for all $b_{1,j}+1 \leq m \leq b_{2,j},$ we have $t_{m,j}=t_{b_{2,j},j}$ for all $b_{1,j}+1 \leq m \leq b_{2,j}.$ 
		
		Now proceeding in this way we obtain a sequence of positive integers $1 \leq b_{1,j} < b_{2,j} < \cdots < b_{y_j,j}=r-j$ such that $E_{b_{i,j}+1,j+1}=\cdots=E_{b_{i+1,j},j+1}$ for all $1 \leq i \leq y_j-1.$ Thus, for any $1 \leq i \leq y_j-1,$ we have $E_{b_{i+1,j},j+1}= (j-1)q+t_{b_{i+1,j},j}$ for some fixed integer $t_{b_{i+1,j},j}$ such that $t_{b_{i,j},j}+1 \leq t_{b_{i+1,j},j} \leq q+1.$ Since $E_{m,j+1}=(j-1)q+t_{m,j}$ for all $b_{i,j}+1 \leq m \leq b_{i+1,j},$ we have $t_{m,j}=t_{b_{i+1,j},j}$ for all $b_{i,j}+1 \leq m \leq b_{i+1,j}.$ 
		
		Recall that by \cref{corollary3.5}, for any $m_j+1 \leq l \leq q,$ we have $E_{m,j}$ is either $(j-1)q+l$ or $(j-1)q+l+1$ for all $r(l-1)+2 \leq m \leq rl-j+1.$ 
		
		Therefore, for every $m_j+1 \leq l \leq q,$ we study the interval $[r(l-1)+2, rl-j+1]$ case by case.
		
		Case(i): Assume that $m_j+1 \leq l \leq t_{b_{1,j},j}-1.$ Then we claim that $E_{m,j}=(j-1)q+l$ for all $r(l-1)+2 \leq m \leq rl-j+1.$ Since $E_{1,j+1}=(j-1)q+t_{b_{1,j},j},$ we have $(j-1)q+l$ appears exactly $r$ times in the $j$-th column for all $m_j+1 \leq l \leq t_{b_{1,j},j}-1.$ Further, by \cref{lemma3.4}, for $2 \leq l \leq m_j,$ $E_{m,j} = (j-1)q+l$ for all $r(l-1)-j+2 \leq m \leq rl-j+1.$ Hence, for $m_j+1 \leq l \leq t_{b_{1,j},j}-1,$ we have $E_{m,j}=(j-1)q+l$ for all $r(l-1)-j+2 \leq m \leq rl-j+1.$ Therefore, the claim is proved. 
		
		Case(ii): Assume that $1 \leq i \leq y_j-1$ and $t_{b_{i,j},j}\leq l \leq t_{b_{i+1,j},j}-1.$ Then we claim that 	\begin{equation}E_{m,j}=
			\left\{\begin{array}{lr}
				(j-1)q+l & \text{for $r(l-1)+2 \leq m \leq rl-b_{i,j}-j+1$}\\
				(j-1)q+l+1 & \text{for $rl-b_{i,j}-j+2 \leq m \leq rl-j+1$}.\\
			\end{array}\right.
		\end{equation} 
		
		Subcase (1): Assume that $l=t_{b_{1,j},j}.$ Recall that $E_{m,j+1}=(j-1)q+t_{b_{1,j},j}$ for all $1 \leq m \leq b_{1,j}.$ Hence, $N_{(j-1)q+t_{b_{1,j},j},j+1} = b_{1,j}.$ Therefore, $N_{(j-1)q+t_{b_{1,j},j},j} = r-b_{1,j}.$ Further, since by Case (i), $E_{r(t_{b_{1,j},j}-1)-j+1,j} \leq (j-1)q+t_{b_{1,j},j}-1,$ we have $E_{m,j}=(j-1)q+t_{b_{1,j},j}$ for all  $r(t_{b_{1,j},j}-1)-j+2 \leq m \leq rt_{b_{1,j},j}-b_{1,j}-j+1.$

		Subcase (2): Assume that $t_{b_{i,j},j}+1\leq l \leq t_{b_{i+1,j},j}-1$ for $1 \leq i \leq y_j-1.$ 
		Since $E_{b_{i,j}+1,j+1}=(j-1)q+t_{b_{i+1,j},j}$, we have $\sum_{t=2}^lN_{(j-1)q+t,j+1} \leq b_{i,j}.$  Further, since $E_{b_{i,j},j+1}=(j-1)q+t_{b_{i,j},j},$ we have $\sum_{t=2}^{l-1}N_{(j-1)q+t,j+1} \geq b_{i,j}.$ Now consider $n_1=n_2=b_{i,j}.$ Then by using \cref{lemma3.6}, we have $E_{m,j}=(j-1)q+l$ for all $r(l-1)-b_{i,j}-j+2 \leq m \leq rl-b_{i,j}-j+1.$
		
		Subcase (3):	Assume that $l=t_{b_{i+1,j},j}$ for $1 \leq i \leq y_j-1.$ Since $E_{b_{i+1,j}+1,j+1}=(j-1)q+t_{b_{i+2,j},j}$, we have $\sum_{t=2}^{t_{b_{i+1,j},j}}N_{(j-1)q+t,j+1} \leq b_{i+1,j}.$ Further, since $E_{b_{i,j},j+1}=(j-1)q+t_{b_{i,j},j}$, we have $\sum_{t=2}^{t_{b_{i+1,j},j}-1}N_{(j-1)q+t,j+1} \geq b_{i,j}.$ Now consider $n_1=b_{i,j}$ and $n_2=b_{i+1,j}.$ Then by using \cref{lemma3.6}, we have $E_{m,j}=(j-1)q+t_{b_{i+1,j},j}$ for all  $r(t_{b_{i+1,j},j}-1)-b_{i,j}-j+2 \leq m \leq rt_{b_{i+1,j},j}-b_{i+1,j}-j+1.$

		Case (iii): Assume that $t_{b_{y_{j},j},j} \leq l\leq q.$ Then we claim that $E_{m,j}=(j-1)q+l+1$ for all $r(l-1)+2 \leq m \leq rl-j+1.$ Since $E_{b_{y_j,j},j+1}=(j-1)q+t_{b_{y_j,j},j},$ we have $(j-1)q+l+1$ appears $r$ times in the $j$-th column for all $t_{b_{y_{j},j},j} \leq l\leq q.$ Thus, $E_{m,j}=(j-1)q+l+1$ for all $r(l-1)+2 \leq m \leq rl+1.$ Therefore, the claim is proved.

		By Case (i) we have for $m_j+1 \leq l \leq t_{b_{1,j},j}-1$ and $r(l-1)+2 \leq m \leq rl-j+1,$ the non-common factor restriction of the $j$-th entry of the $m$-th Pl\"{u}cker coordinate to $\mathcal{R}^{v_{\underline{m}}}_{v_{r,n}}$ is $1.$ Assume that $1 \leq i \leq y_j-1.$ Then by Case (ii) we have for $t_{b_{i,j},j} \leq l \leq t_{b_{i+1,j},j}-1$ and $r(l-1)+2 \leq m \leq rl-b_{i,j}-j+1,$ the non-common factor of the restriction of the $j$-th entry of the $m$-th Pl\"{u}cker coordinate to $\mathcal{R}^{v_{\underline{m}}}_{w_{r,n}}$ is $1$ (see \eqref{(5.1)} and \eqref{(5.2)}). Also, for $t_{b_{i,j},j} \leq l \leq t_{b_{i+1,j},j}-1$ and $rl-b_{i,j}-j+2 \leq m \leq rl-j+1,$ the non-common factor of the restriction of the $j$-th entry of the $m$-th Pl\"{u}cker coordinate to $\mathcal{R}^{v_{\underline{m}}}_{v_{r,n}}$ is $c_{(j-1)q+l,1}$ (see \eqref{(5.1)} and \eqref{(5.2)}). By Case (iii) we have for $t_{b_{y_j,j},j} \leq l \leq q$ and $r(l-1)+2 \leq m \leq rl-j+1,$ the non-common factor of the restriction of the $j$-th entry of the $m$-th Pl\"{u}cker coordinate to $\mathcal{R}^{v_{\underline{m}}}_{w_{r,n}}$ is $c_{(j-1)q+l,1}$ (see \eqref{(5.1)} and \eqref{(5.2)}).
		
		Therefore, for $m_j+1 \leq l \leq q$ and $r(l-1)+2 \leq m \leq rl-j+1,$ the product of the non-common factors of the restrictions of the $j$-th entries of the $m$-th Pl\"{u}cker coordinates  to $\mathcal{R}^{v_{\underline{m}}}_{w_{r,n}}$ is 	
		\begin{align}
			\begin{split}
				\prod_{i=1}^{y_j-1}(\prod_{l=t_{b_{i,j},j}}^{t_{b_{i+1,j},j}-1}c^{b_{i,j}}_{(j-1)q+l,1})(\prod_{l=t_{b_{y_j,j},j}}^{q}
				c^{r-j}_{(j-1)q+l,1}) 
				=\prod_{m=1}^{r-j}(\prod_{l=t_{m,j}}^{q}c_{(j-1)q+l,1})
			\end{split}
			\label{(6.4)} (\text{see \cref{lemma6.1} in Appendix}).
		\end{align}
		
		Since $E_{m,j+1}=(j-1)q+t_{m,j}$ for $1 \leq m \leq r-j,$ the product of the restrictions of the $j+1$-th entries of the $m$-th Pl\"{u}cker coordinates to $\mathcal{R}^{v_{\underline{m}}}_{w_{r,n}}$ is 
		\begin{align}
			\prod_{m=1}^{r-j}\bigg(\sum_{z=m_j+1}^{t_{m,j}-1}\big((\prod_{l=m_j+1}^{z-1}c_{(j-1)q+l,2})(\prod_{l=z}^{t_{m,j}-1}c_{(j-1)q+l,1})\big)+\prod_{l=m_j+1}^{t_{m,j}-1}c_{(j-1)q+l,2}\bigg) ~~(\text{see } \eqref{(5.1)} \text{and} \eqref{(5.2)}).
			\label{6.5} 
		\end{align}	
		
		Therefore, the non-common factor of the restriction of $f$ to $\mathcal{R}^{v_{\underline{m}}}_{w_{r,n}}$ is the monomial in \ref{6.6}. \\

	\end{proof}
	
	For $1 \leq j \leq r-1$ and $1 \leq m \leq r-j$, let 
	\begin{center}$\displaystyle X_{(j-1)q+t_{m,j}}=(\prod_{l=t_{m,j}}^qc_{(j-1)q+l,1})(\sum_{z=m_j+1}^{t_{m,j}-1}(\prod_{l=m_j+1}^{z-1}c_{(j-1)q+l,2}\prod_{l=z}^{t_{m,j}-1}c_{(j-1)q+l,1})+ \prod_{l=m_j+1}^{t_{m,j}-1}c_{(j-1)q+l,2}).$
	\end{center}
	For the sequence $\underline{t}_j:= t_{1,j} \leq t_{2,j} \leq \cdots \leq t_{r-j,j},$ let $X_{\underline{t}_j}=\prod_{m=1}^{r-j}X_{(j-1)q+t_{m,j}}.$
	
	On the open cell $\mathcal{R}^{v_{\underline{m}}}_{w_{r,n}},$ $\Gamma$ evaluates to \begin{align}
		F\cdot\prod_{j=1}^{r-1}X_{\underline{t}_j}
		\label{(3.37)}
	\end{align} where $\Gamma$ is as in the proof of \cref{lemma6.2} and $F$ is the common factor as described in \cref{lemma3.8}. Note that $F$ is nowhere vanishing function on $\mathcal{R}^{v_{\underline{m}}}_{w_{r,n}}$ corresponding to the distinguished subexpression of $v_{\underline{m}}$ in $w_{r,n}.$ 
	
	\section{Main theorem} \label{section6}
	In this section we study the GIT quotient of $X$ admitting semistable point with respect to the descent of the line bundle $\mathcal{L},$ where $X:=X^{v_{\underline{m}}}_{w_{r,n}}$ and $\mathcal{L}:=\mathcal{L}(n\omega_r).$ 
	
	Recall that $c_{i,j}$ is the coordinate function on $j$-th appearance of $U^{*}_{-\alpha_i}$ in the Deodhar component $\mathcal{R}^{v_{\underline{m}}}_{w_{r,n}}$ (see \cref{section5}). Then $\{c_{y,1}, c_{z,2} : 1 \leq y \leq rq \text{ and } (j-1)q+m_j+1 \leq z \leq jq \text{ such that } 1 \leq j \leq r-1\}$ are algebraically independent. 
	
	For fixed integer $j$ such that $1 
	\leq j \leq r-1,$ let $R((j-1)q+m_j+1)$ be the $\mathbb{C}$-algebra generated by $c_{y,1}$ such that $(j-1)q+m_j+1 \leq y \leq jq.$ Note that $R((j-1)q+m_j+1)$ is a polynomial algebra.
	
	For fixed integers $j,l,$ where $1 \leq j \leq r-1$ and $m_j+2 \leq l \leq q+1,$ let $R((j-1)q+l)$ be the $\mathbb{C}$-algebra generated by $c_{y,1}, c_{z,2}$ such that $(j-1)q+m_j+1 \leq y \leq jq,$ $(j-1)q+m_j+1 \leq z \leq (j-1)q+l-1.$  
	
	\normalsize{Then we note} that $R((j-1)q+l)$ is a polynomial algebra for all $m_j+1 \leq l \leq q+1.$  
	For $1 \leq j \leq r-1$ and $m_j+1 \leq l \leq q+1$, consider the polynmial 
	\small{\begin{center}$\displaystyle X_{(j-1)q+l}=(\prod_{a=l}^qc_{(j-1)q+a,1})(\sum_{b=m_j+1}^{l-1}(\prod_{a=m_j+1}^{b-1}c_{(j-1)q+a,2}\prod_{a=b}^{l-1}c_{(j-1)q+a,1})+ \prod_{a=m_j+1}^{l-1}c_{(j-1)q+a,2})$
	\end{center}} \normalsize{in} $R((j-1)q+l).$
	
	Observe that for $l=m_j+1,$ $X_{(j-1)q+m_j+1}=(\prod_{a=m_j+1}^qc_{(j-1)q+a,1}).$
	
	For $1 \leq j \leq r-1$ and $m_j+2 \leq l \leq q+1,$ take $$f_{(j,l)}=(\prod_{a=l}^qc_{(j-1)q+a,1})\prod_{a=m_j+1}^{l-2}c_{(j-1)q+a,2}$$ and \small{$$g_{(j,l)}=(\prod_{a=l}^qc_{(j-1)q+a,1})(\sum_{b=m_j+1}^{l-1}(\prod_{a=m_j+1}^{b-1}c_{(j-1)q+a,2}\prod_{a=b}^{l-1}c_{(j-1)q+a,1})) $$} in $R((j-1)q+l-1).$  Then $X_{(j-1)q+l}=f_{(j,l)}c_{(j-1)q+l-1,2}+g_{(j,l)} \in R((j-1)q+l-1)[c_{(j-1)q+l-1,2}].$ \normalsize{Since} $\{c_{z,2}:  (j-1)q+m_j+1 \leq z \leq jq \text{ and }  1 \leq j \leq r-1\}$ are algebraically independent, it follows that $\{X_{(j-1)q+l}: 1 \leq j \leq r-1 \text{ and } m_j+1 \leq l \leq q+1\}$ are algebraically independent. Now consider $\{X_{(j-1)q+l} : m_j+1 \leq l \leq q+1\}$ as homogeneous coordinates of $\mathbb{P}^{q-m_j}.$ 
	
	Let $X^{ss}:=X^{ss}_T(\mathcal{L})$ and $X^{s}:=X^{s}_T(\mathcal{L}).$ Let $R_{k}=H^0(X, \mathcal{L}^{\otimes k})^{T}.$ Let $V=(H^{0}(X,\mathcal{L})^T)^{*}.$ Now since the homogeneous coordinate ring of $T \backslash \backslash X^{ss}$ is generated by $R_1$ (see \cref{lemma4.1}), for any $x \in X^{ss}$ there exists a $\Gamma$ in $\mathcal{A}$ such that $p_{\Gamma}(x) \neq 0,$  where  $\mathcal{A}$ is as in Subsection \eqref{section7}. Consider the map \begin{align*}
		\phi: X^{ss} \longrightarrow \mathbb{P}(V)
	\end{align*} defined by $\phi(x)=(p_{\Gamma}(x))_{\Gamma\in \mathcal{A}}.$ By \eqref{(3.37)}, we have \begin{align}
		p_{\Gamma}|_{\mathcal{R}^{v_{\underline{m}}}_{w_{r,n}}}(u)= F\cdot\prod_{j=1}^{r-1}X_{\underline{t}_j}, u\in \mathcal{R}^{v_{\underline{m}}}_{w_{r,n}}
		\label{5.1}
	\end{align} where $X_{\underline{t}_j}=\prod_{m=1}^{r-j}X_{(j-1)q+t_{m,j}},$  $F$ is a nowhere vanishing function on $\mathcal{R}^{v_{\underline{m}}}_{w_{r,n}}$ and $F$ is independent of $\Gamma.$
	
	Recall that $A_i, 1 \leq i \leq r-1$ is as in Subsection \eqref{section7}. Let $Z=(\mathbb{P}^{q-m_1}, \mathcal{O}(r-1)) \times \cdots \times (\mathbb{P}^{q-m_{r-1}}, \mathcal{O}(1)).$ Since dim($V$)=$|\mathcal{A}|= |A_1 \times A_2 \times \cdots \times A_{r-1}|$ (see \cref{lemma4.1*}), we can embed 
	\begin{center}
		$\psi: Z \hookrightarrow \mathbb{P}(V) \hspace{5cm}$
		$(z_{\underline{t}_1},z_{\underline{t}_2}, \ldots, z_{\underline{t}_{r-1}}) \mapsto (\prod_{j=1}^{r-1}z_{\underline{t}_j})_{(\underline{t}_1,\underline{t}_2, \ldots, \underline{t}_{r-1}) \in A_1 \times A_2 \times \cdots \times A_{r-1}}.$
		\hspace{2cm}{(5.2)}
	\end{center}
	By \eqref{5.1} and (5.2) we have $\phi(X^{ss} \cap \mathcal{R}^{v_{\underline{m}}}_{w_{r,n}}) \subseteq \psi\big(Z\big).$ Since $X^{ss} \cap \mathcal{R}^{v_{\underline{m}}}_{w_{r,n}}$ is a $T$-stable dense open subset of $X^{ss},$ we have the image $\phi(X^{ss} \cap \mathcal{R}^{v_{\underline{m}}}_{w_{r,n}})$ is a dense open subset of $\phi(X^{ss}).$ Therefore, $\phi(X^{ss}) \subseteq \psi\big(Z\big).$ On the other hand, by the definition $\phi(X^{ss})=Proj(\oplus_{k \in \mathbb{Z}_{\geq 0}}R_{k})=T \backslash \backslash X^{ss}.$ Further, since $X^{ss}=X^{s}$ (see \cite[Theorem 3.3, p.5]{K1}), we have $\text{dim}(T \backslash \backslash X^{ss})= \text{dim}(X)- \text{dim}(T)=(q-m_1)+(q-m_2)+\cdots+(q-m_{r-1})=\text{dim}(\psi(Z)).$ Hence, we have $\phi(X^{ss})=\psi(Z).$ Thus, $T \backslash \backslash X^{ss}$ is isomorphic to $(\mathbb{P}^{q-m_1}, \mathcal{O}(r-1)) \times \cdots \times (\mathbb{P}^{q-m_{r-1}}, \mathcal{O}(1)).$

	\begin{theorem}\label{theorem8.1}
		Let $\mathcal{M}$ be the descent of $\mathcal{L}$ to $T \backslash \backslash X^{ss}.$ The polarized variety $(T\backslash \backslash X^{ss}, \mathcal{M})$ is isomorphic to $({\mathbb P}^{q-m_1} \times {\mathbb P}^{q-m_2} \times \cdots \times {\mathbb P}^{q-m_{r-1}}, \mathcal{O}_{\mathbb{P}^{q-m_1}}(r-1) \boxtimes \mathcal{O}_{\mathbb{P}^{q-m_2}}(r-2) \boxtimes \cdots \boxtimes \mathcal{O}_{\mathbb{P}^{q-m_{r-1}}}(1)).$ Further, ${\mathbb P}^{q-m_1} \times {\mathbb P}^{q-m_2} \times \cdots \times {\mathbb P}^{q-m_{r-1}}$ is embedded in a projective space via the Segre embedding.
	\end{theorem}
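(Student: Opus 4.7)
The plan is to assemble the pieces already established in the preceding sections into a single commutative diagram. By the $R_1$-generation result (\cref{lemma4.1}), the quotient $T\backslash\backslash X^{ss}$ is the image of $X^{ss}$ under the map $\phi\colon X^{ss}\to\mathbb{P}(V)$ sending $x$ to $(p_{\Gamma}(x))_{\Gamma\in\mathcal{A}}$, where $V=(H^0(X,\mathcal{L})^T)^{*}$ and $\mathcal{A}$ is the set of $T$-invariant standard tableaux of shape $n\times r$ whose monomial does not vanish on $X$. By \cref{lemma4.1*}, $\mathcal{A}$ is in bijection with $A_1\times\cdots\times A_{r-1}$, so $|\mathcal{A}|=\prod_{j=1}^{r-1}|A_j|$, which will match the dimension of the multi-graded piece of the Segre/Veronese image of $Z$.

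Next, I would exploit the Deodhar-cell formula \eqref{(3.37)}, which tells us that on $\mathcal{R}^{v_{\underline{m}}}_{w_{r,n}}$ every $p_{\Gamma}$ factors as $F\cdot\prod_{j=1}^{r-1} X_{\underline{t}_{j}}$, where $F$ is a common nowhere-vanishing factor independent of $\Gamma$ and $X_{\underline{t}_{j}}=\prod_{m=1}^{r-j} X_{(j-1)q+t_{m,j}}$ is a product of $r-j$ of the algebraically independent coordinates $\{X_{(j-1)q+l}\colon m_j+1\le l\le q+1\}$ on $\mathbb{P}^{q-m_j}$. Thus, on the dense open $T$-stable cell, $\phi$ coincides (up to the common factor $F$, which can be divided away projectively) with the composition of the Veronese embedding of degree $(r-1,r-2,\ldots,1)$ on $Z=\mathbb{P}^{q-m_1}\times\cdots\times\mathbb{P}^{q-m_{r-1}}$ followed by the Segre embedding into $\mathbb{P}(V)$, i.e.\ with the map $\psi$ of (5.2). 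Hence $\phi(X^{ss}\cap\mathcal{R}^{v_{\underline{m}}}_{w_{r,n}})\subseteq\psi(Z)$, and since the cell is $T$-stable open and dense in $X^{ss}$ (so its image is dense in $\phi(X^{ss})$), taking closures gives $\phi(X^{ss})\subseteq\psi(Z)$.

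To promote this inclusion to an equality, I would perform the dimension count. Since $r$ and $n=qr+1$ are coprime, the result of Kannan (\cite{K1}) gives $X^{ss}=X^s$, so $T$ acts on $X^{ss}$ with finite stabilisers, whence
\[
\dim(T\backslash\backslash X^{ss})=\dim(X)-\dim(T)=\bigl((r-1)+\textstyle\sum_{i=1}^{r-1}(q-m_i)\bigr)-(n-1)=\sum_{i=1}^{r-1}(q-m_i),
\]
using $\dim X^{v_{\underline{m}}}_{w_{r,n}}=\ell(w_{r,n})-\ell(v_{\underline{m}})$ and the known lengths. This equals $\dim\psi(Z)$. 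Since $\phi(X^{ss})$ is a closed subset of the irreducible variety $\psi(Z)$ of the same dimension, the two coincide, giving an isomorphism of unpolarised varieties $T\backslash\backslash X^{ss}\cong\mathbb{P}^{q-m_1}\times\cdots\times\mathbb{P}^{q-m_{r-1}}$.

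Finally, for the polarisation, the descent $\mathcal{M}$ of $\mathcal{L}$ pulls back under this isomorphism to the line bundle whose sections correspond to the degree-one piece $R_1$; by the factorisation $p_{\Gamma}|_{\mathcal{R}}=F\cdot\prod_{j}X_{\underline{t}_j}$ with $X_{\underline{t}_j}$ of degree $r-j$ in the coordinates of the $j$-th factor, this line bundle is exactly $\mathcal{O}(r-1)\boxtimes\mathcal{O}(r-2)\boxtimes\cdots\boxtimes\mathcal{O}(1)$. The main obstacle I expect is the careful bookkeeping that the injection $\psi$ really is the Segre of Veronese embeddings of the stated degrees---that is, checking that the combinatorial bijection $\mathcal{A}\leftrightarrow A_1\times\cdots\times A_{r-1}$ is compatible with the product factorisation of $p_{\Gamma}|_{\mathcal{R}}$ so that the map $\phi$ lands in $\psi(Z)$ set-theoretically (not merely that each $p_\Gamma$ is a monomial in the $X_{(j-1)q+l}$'s). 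Everything else is a straightforward application of density, the dimension formula, and semistability$\,=\,$stability.
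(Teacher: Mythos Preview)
Your proposal is correct and follows essentially the same route as the paper, which in fact places this entire argument in the discussion immediately preceding \cref{theorem8.1} and records the proof itself as ``Follows from the earlier discussion'': define $\phi$ via $R_1$-generation, use \eqref{(3.37)} and the bijection of \cref{lemma4.1*} to see that $\phi$ lands in $\psi(Z)$ on the open Deodhar cell, close up by density, and finish with the dimension count using $X^{ss}=X^{s}$. One minor slip to fix: in your displayed dimension formula you have $\dim X = (r-1)+\sum_{i=1}^{r-1}(q-m_i)$, but in fact $\ell(w_{r,n})-\ell(v_{\underline{m}}) = (n-1)+\sum_{i=1}^{r-1}(q-m_i)$, so that subtracting $\dim T = n-1$ genuinely yields $\sum_{i=1}^{r-1}(q-m_i)$.
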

	\begin{proof}
		Follows from the earlier discussion.
	\end{proof}
	\begin{remark}
		
		\begin{itemize}
			\item[(i)] Let $\pi: SL(n, \mathbb{C}) \longrightarrow PSL(n,\mathbb{C})$ be the natural map. Let $T_{ad}=\pi(T).$ By \cite[Example 3.3, p.194]{K3}, the stabilizer of any $x \in (G_{r,n})^{s}_T(\mathcal{L}(n\omega_r))$ in $T_{ad}$ is trivial. Hence, by \cite[Theorem 2.2]{Sko2}, the automorphism group of $T \backslash \backslash (G_{r,n})^{ss}_T(\mathcal{L}(n\omega_{r}))$ is isomorphic to the semi direct product $W \rtimes Aut(S, \alpha_r)$ where $Aut(S, \alpha_r)$ is the subgroup of the automorphism group of the Dynkin diagram of $SL_{n}$ with respect to $T$ and $B$ fixing $\alpha_r.$ Hence,  the automorphism group of $T \backslash \backslash (G_{r,n})^{ss}_T(\mathcal{L}(n\omega_{r}))$ is a finite group.  Therefore, $T \backslash \backslash (G_{r,n})^{ss}_T(\mathcal{L}(n\omega_{r}))$ is not a product of projective spaces.
			
			\item[(ii)] By \cite[Theorem 5.14, p.906]{Bakshi}, the GIT quotient  $T \backslash \backslash (X^{id}_{w_{3,7}})^{ss}_T(\mathcal{L}(7\omega_{3}))$ is a rational normal scroll. Note that for any positive integers $m_1, m_2, \ldots, m_k$ and $a_1, a_2, \ldots, a_k,$ $dim(H^0(\mathbb{P}^{m_1} \times \mathbb{P}^{m_2} \times \cdots \times \mathbb{P}^{m_k}, \mathcal{O}(a_1) \boxtimes \mathcal{O}(a_2) \boxtimes \cdots \boxtimes \mathcal{O}(a_k)))=\prod_{i=1}^kdim(H^{0}(\mathbb{P}^{m_i}, \mathcal{O}(a_i)).$
			 On the other hand, we have $dim(H^{0}(X^{id}_{w_{3,7}}, \mathcal{L}(7\omega_3))^T)=7$ and $7$ is a prime number. Therefore, if   $T \backslash \backslash (X^{id}_{w_{3,7}})^{ss}_T(\mathcal{L}(7\omega_{3}))$ is a product of projective spaces, then $T \backslash \backslash (X^{id}_{w_{3,7}})^{ss}_T(\mathcal{L}(7\omega_{3}))$ is a projective space. Further, since the quotient $T \backslash \backslash (X^{id}_{w_{3,7}})^{ss}_T(\mathcal{L}(7\omega_{3}))$ is a rational normal scroll, it is not a projective space.
			
		\end{itemize}
	\end{remark}
	\begin{corollary}
		Any product of projective spaces $\mathbb{P}^{a_1} \times \mathbb{P}^{a_2} \times \cdots \times \mathbb{P}^{a_l}$ can be written as the GIT quotient of a Richardson variety in some suitable polarized Grassmannian.
	\end{corollary}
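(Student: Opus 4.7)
The plan is to derive this Corollary directly from \cref{theorem8.1} by a careful choice of parameters. Given a product $\mathbb{P}^{a_1} \times \mathbb{P}^{a_2} \times \cdots \times \mathbb{P}^{a_l}$, I want to realize the $a_i$'s as the exponents $q-m_i$ appearing in the main theorem.

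First I would set $r = l+1$ so that the number of factors $r-1$ matches $l$. Next, I would choose $q$ to be any integer satisfying $q \geq 1 + \max_{1 \leq i \leq l} a_i$; this ensures that the assignments $m_i := q - a_i$ all lie in the allowed range $1 \leq m_i \leq q$ (the upper bound is automatic since $a_i \geq 0$, while the lower bound forces the condition on $q$). Set $n := qr+1$ and $\underline{m} := (m_1, m_2, \ldots, m_{r-1})$. With these choices, $v_{\underline{m}}$ is a well-defined element of $W^{P^{\alpha_r}}$ with $v_{\underline{m}} \leq v_{r,n}$, and by the discussion preceding \cref{theorem1.1} we have $(X^{v_{\underline{m}}}_{w_{r,n}})^{ss}_T(\mathcal{L}(n\omega_r)) \neq \emptyset$.

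Applying \cref{theorem8.1} to this data immediately yields an isomorphism
\[
T \backslash\backslash (X^{v_{\underline{m}}}_{w_{r,n}})^{ss}_T(\mathcal{L}(n\omega_r)) \;\cong\; \mathbb{P}^{q-m_1} \times \mathbb{P}^{q-m_2} \times \cdots \times \mathbb{P}^{q-m_{r-1}} \;=\; \mathbb{P}^{a_1} \times \mathbb{P}^{a_2} \times \cdots \times \mathbb{P}^{a_l},
\]
with the polarization $\mathcal{O}(r-1) \boxtimes \mathcal{O}(r-2) \boxtimes \cdots \boxtimes \mathcal{O}(1)$ on the right-hand side. The Richardson variety $X^{v_{\underline{m}}}_{w_{r,n}}$ sits inside the Grassmannian $G_{r,n} = G_{l+1, \, q(l+1)+1}$, polarized by $\mathcal{L}(n\omega_r)$, which is exactly what is claimed.

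There is no real obstacle here; the corollary is essentially a parameter-matching consequence of the main theorem. The only minor point to verify is the bookkeeping that the constraints $1 \leq m_i \leq q$ imposed in the setup of \cref{theorem1.1} can always be met by enlarging $q$, which is why the freedom to choose $q$ arbitrarily large is important. Note that the statement of the corollary only asserts the existence of an underlying variety isomorphism and does not specify the polarization on the product, so the particular line bundle appearing in \cref{theorem8.1} is irrelevant for the conclusion.
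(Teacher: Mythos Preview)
Your proposal is correct and follows essentially the same argument as the paper: set $r=l+1$, choose $q$ large enough so that $m_i:=q-a_i$ lands in $\{1,\ldots,q\}$, and apply \cref{theorem8.1}. The paper simply takes the minimal such $q$, namely $q=1+\max_i a_i$, whereas you allow any $q$ above this threshold; otherwise the two proofs are identical.
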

	
	\begin{proof}
		Let max$\{a_1,a_2, \ldots, a_l\}=a_m.$ Let $n=(l+1)(a_m+1)+1.$ Take $r=l+1$ and $q=a_m+1.$ Since $0 \leq a_1, a_2, \ldots, a_l \leq q-1,$ we have $1 \leq q-a_1, q-a_2, \ldots, q-a_l \leq q.$ Let $m_i=q-a_i$ for all $1 \leq i \leq l.$ Consider $v_{\underline{m}}= \prod_{j=1}^{l}(s_{(j-1)q+m_j} \cdots s_{j+2}s_{j+1}) \in W^{P^{\alpha_{l+1}}}.$ Then in one line notation $v_{\underline{m}}$ is $(1,m_1+1,q+m_2+1,\ldots,(l-1)q+m_{l}+1).$ By \cref{theorem8.1}, $T \backslash \backslash (X^{v_{\underline{m}}}_{w_{l+1,n}})^{ss}_T(\mathcal{L}(n\omega_{l+1}))$ is isomorphic to  $(\mathbb{P}^{a_1}, \mathcal{O}(l)) \times (\mathbb{P}^{a_2}, \mathcal{O}(l-1)) \times \cdots \times (\mathbb{P}^{a_l}, \mathcal{O}(1)).$
	\end{proof}
	
	\begin{example}
		
		Let $(n,r,q)=(10,3,3).$ Then $w_{3,10}=(s_3s_2s_1)(s_6s_5s_4s_3s_2)(s_9s_8s_7s_6s_5s_4s_3).$ For $1 \leq m_1, m_2 \leq 3$, consider $v_{(m_1,m_2)} = (s_{m_1}s_{m_1-1}\cdots s_2)(s_{3+m_2}s_{3+m_2-1}\cdots s_{3})$.
		Then $w_{3,10}=c.u_{(m_1,m_2)}v_{(m_1,m_2)},$ where $c=(s_3s_2s_1)(s_6s_5s_4)(s_9s_8s_7)$ and $u_{(m_1,m_2)}=(s_3 \ldots s_{m_1+1})(s_{6} \cdots s_{3+m_2+1}).$  Then we have the following subdiagram of the Bruhat lattice $W^{P^{\alpha_{3}}}:$

		\begin{center}
			\begin{tikzpicture}[scale=.7]
				\node (e) at (0,0)  {$v_{(3,3)}$};
				\node (z) at (-1.4,-1) {$s_3$};
				\node (y) at (1.3,-1) {$s_6$};
				\node (g) at (-2,-2) {$v_{(2,3)}$};
				\node (b) at (2,-2) {$v_{(3,2)}$};
				\node (x) at (-3.4,-3) {$s_2$};
				\node (w) at (3.4,-3) {$s_5$};
				\node (v) at (-1.4,-3.2) {$s_6$};
				\node (u) at (1.4,-3.2) {$s_3$};
				\node (h) at (-4,-4) {$v_{(1,3)}$};
				\node (d) at (0,-4) {$v_{(2,2)}$};
				\node (1) at (4,-4) {$v_{(3,1)}$};
				\node (t) at (-3.4,-5.2) {$s_6$};
				\node (s) at (3.4,-5.2) {$s_3$};
				\node (r) at (-1.4,-4.9) {$s_2$};
				\node (q) at (1.4,-4.9) {$s_5$};
				\node (f) at (-2,-6) {$v_{(1,2)}$};
				\node (a) at (2,-6) {$v_{(2,1)}$};
				\node (o) at (-1.4,-7.1) {$s_5$};
				\node (p) at (1.4,-7.1) {$s_2$};
				\node (c) at (0,-8) {$v_{(1,1)}$};
				\node (z) at (0,-10) {Figure $1$};
				\draw (e) -- (g) -- (h) -- (f) -- (c) -- (a) -- (1) -- (b) -- (e);
				\draw (g) -- (d) -- (f);
				\draw (b) -- (d) -- (a);
			\end{tikzpicture}
		\end{center}
		Then we have the following:
		\begin{itemize}
			\item[(1)]	The GIT quotient $T\backslash \backslash (X^{v_{(3,3)}}_{w_{3,10}})^{ss}_T(\mathcal{L}(10\omega_3))$ is isomorphic to a point.
			\item[(2)] The GIT quotient $T\backslash \backslash (X^{v_{(2,3)}}_{w_{3,10}})^{ss}_T(\mathcal{L}(10\omega_3))$ is isomorphic to ${\mathbb P}^{1}$ and the GIT quotient is embedded via the very ample line bundle $\mathcal{O}_{\mathbb{P}^{1}}(2)$. 
			\item[(3)] The GIT quotient $T\backslash \backslash (X^{v_{(3,2)}}_{w_{3,10}})^{ss}_T(\mathcal{L}(10\omega_3))$ is isomorphic to ${\mathbb P}^{1}$ and the GIT quotient is embedded via the very ample line bundle $\mathcal{O}_{\mathbb{P}^{1}}(1)$. 
			\item[(4)] The GIT quotient $T\backslash \backslash (X^{v_{(1,3)}}_{w_{3,10}})^{ss}_T(\mathcal{L}(10\omega_3))$ is isomorphic to ${\mathbb P}^{2}$ and the GIT quotient is embedded via the very ample line bundle $\mathcal{O}_{\mathbb{P}^{2}}(2)$. 
			\item[(5)] The GIT quotient $T\backslash \backslash (X^{v_{(2,2)}}_{w_{3,10}})^{ss}_T(\mathcal{L}(10\omega_3))$ is isomorphic to ${\mathbb P}^{1} \times {\mathbb P}^{1}$ and the GIT quotient is embedded via the very ample line bundle $\mathcal{O}_{\mathbb{P}^{1}}(2) \boxtimes \mathcal{O}_{\mathbb{P}^{1}}(1)$. 
			\item[(6)]The GIT quotient $T\backslash \backslash (X^{v_{(3,1)}}_{w_{3,10}})^{ss}_T(\mathcal{L}(10\omega_3))$ is isomorphic to ${\mathbb P}^{2}$ and the GIT quotient is embedded via the very ample line bundle $\mathcal{O}_{\mathbb{P}^{2}}(1)$. 
			\item[(7)]The GIT quotient $T\backslash \backslash (X^{v_{(1,2)}}_{w_{3,10}})^{ss}_T(\mathcal{L}(10\omega_3))$ is isomorphic to ${\mathbb P}^{2} \times {\mathbb P}^{1}$ and the GIT quotient is embedded via the very ample line bundle $\mathcal{O}_{\mathbb{P}^{2}}(2) \boxtimes \mathcal{O}_{\mathbb{P}^{1}}(1)$.
			\item[(8)] The GIT quotient $T\backslash \backslash (X^{v_{(2,1)}}_{w_{3,10}})^{ss}_T(\mathcal{L}(10\omega_3))$ is isomorphic to ${\mathbb P}^{1} \times {\mathbb P}^{2}$ and the GIT quotient is embedded via the very ample line bundle $\mathcal{O}_{\mathbb{P}^{1}}(2) \boxtimes \mathcal{O}_{\mathbb{P}^{2}}(1)$.
			\item[(9)] The GIT quotient $T\backslash \backslash (X^{v_{(1,1)}}_{w_{3,10}})^{ss}_T(\mathcal{L}(10\omega_3))$ is isomorphic to ${\mathbb P}^{2} \times {\mathbb P}^{2}$ and the GIT quotient is embedded via the very ample line bundle $\mathcal{O}_{\mathbb{P}^{2}}(2) \boxtimes \mathcal{O}_{\mathbb{P}^{2}}(1)$.   
		\end{itemize}
		
	\end{example}
	
	\section{Appendix} In this short appendix we give a proof of the following lemma which we have used earlier in \cref{lemma6.2} (see \eqref{(6.4)}).
	
	\begin{lemma}\label{lemma6.1} For $1 \leq j \leq r-1,$ let $y_j, b_{i,j}, t_{b_{i,j}}$ be as in \cref{lemma6.2}. Then we have \begin{center}
			$\prod_{i=1}^{y_j-1}(\prod_{l=t_{b_{i,j},j}}^{t_{b_{i+1,j},j}-1}c^{b_{i,j}}_{(j-1)q+l,1})(\prod_{l=t_{b_{y_j,j},j}}^{q}
			c^{r-j}_{(j-1)q+l,1}) 
			=\prod_{m=1}^{r-j}(\prod_{l=t_{m,j}}^{q}c_{(j-1)q+l,1}). \hspace{.2cm}(6.1)$
		\end{center}
	\end{lemma}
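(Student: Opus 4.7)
The plan is to prove the identity by comparing, for each fixed integer $l$ with $1 \leq l \leq q$, the exponent of the variable $c_{(j-1)q+l,1}$ on the two sides. Since the $c_{(j-1)q+l,1}$ for $1 \leq l \leq q$ are algebraically independent (they are part of a system of coordinate functions on disjoint factors $U^{*}_{-\alpha_{(j-1)q+l}}$), the identity will follow once the exponents match termwise.

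I would first record the key structural fact established in \cref{lemma6.2}, namely that the indices $1 \leq b_{1,j} < b_{2,j} < \cdots < b_{y_j,j}= r-j$ partition $\{1, 2, \ldots, r-j\}$ into blocks on which the non-decreasing sequence $t_{1,j} \leq t_{2,j} \leq \cdots \leq t_{r-j,j}$ is constant: specifically, $t_{m,j} = t_{b_{1,j},j}$ for $1 \leq m \leq b_{1,j}$, and $t_{m,j} = t_{b_{i+1,j},j}$ for $b_{i,j}+1 \leq m \leq b_{i+1,j}$ with $1 \leq i \leq y_j - 1$.

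For the right-hand side, I would rewrite $\prod_{m=1}^{r-j}\prod_{l=t_{m,j}}^q c_{(j-1)q+l,1}$ by interchanging the order of the product, so that for each $l$ the exponent of $c_{(j-1)q+l,1}$ equals $\#\{m : 1 \leq m \leq r-j, \ t_{m,j} \leq l\}$. Using monotonicity of $(t_{m,j})$ and the block structure above, this count is $0$ if $l < t_{b_{1,j},j}$, it is $b_{i,j}$ whenever $t_{b_{i,j},j} \leq l < t_{b_{i+1,j},j}$ for some $1 \leq i \leq y_j - 1$, and it is $b_{y_j,j}=r-j$ whenever $t_{b_{y_j,j},j} \leq l \leq q$. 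On the left-hand side, the exponent of $c_{(j-1)q+l,1}$ can be read off directly from the product: it is $b_{i,j}$ in the range $t_{b_{i,j},j} \leq l \leq t_{b_{i+1,j},j}-1$, it is $r-j$ in the range $t_{b_{y_j,j},j} \leq l \leq q$, and it is $0$ otherwise.

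Matching the two descriptions case by case completes the proof. No step is genuinely an obstacle; the only mild care required is to treat the boundary case $l < t_{b_{1,j},j}$ (where both sides have exponent zero and contribute nothing) and to observe that the ranges $[t_{b_{i,j},j}, t_{b_{i+1,j},j}-1]$ together with $[t_{b_{y_j,j},j}, q]$ cover $[t_{b_{1,j},j}, q]$ disjointly, which is immediate from the strict monotonicity of the $b_{i,j}$'s and the fact that $(t_{b_{i,j},j})_{i}$ is strictly increasing by construction.
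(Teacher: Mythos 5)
Your proof is correct. Comparing the exponent of each variable $c_{(j-1)q+l,1}$ on the two sides, using the block structure from \cref{lemma6.2} ($t_{m,j}=t_{b_{i,j},j}$ for $b_{i-1,j}+1\leq m\leq b_{i,j}$, with $b_{0,j}=0$ and $b_{y_j,j}=r-j$) and the strict increase of the $t_{b_{i,j},j}$, does give the identity, and since both sides are monomials in distinct variables this termwise matching of exponents suffices. This is, however, a genuinely different route from the paper's. The paper argues by induction on the length $y_j$ of the sequence $b_{1,j}<\cdots<b_{y_j,j}$: it first rewrites each block factor $\prod_{l=t_{b_{i,j},j}}^{t_{b_{i+1,j},j}-1}c^{b_{i,j}}_{(j-1)q+l,1}$ as a product over the individual indices $m$ in the block times a residual factor with exponent $b_{i-1,j}$, then proves an auxiliary telescoping identity (its equation (6.3)) by induction, peeling off the exponent $b_{1,j}$ from all later factors, and finally reassembles everything into $\prod_{m=1}^{r-j}\prod_{l=t_{m,j}}^{q}c_{(j-1)q+l,1}$. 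Your exponent count replaces the induction and the auxiliary identity by a single bookkeeping observation (the exponent of $c_{(j-1)q+l,1}$ on both sides equals $\#\{m: t_{m,j}\leq l\}$, which is $0$, $b_{i,j}$, or $r-j$ according to where $l$ sits relative to the $t_{b_{i,j},j}$), so it is shorter and more transparent; the paper's manipulation instead exhibits explicitly how the factors of one product regroup into the other, which is what makes it longer. Both arguments rest on exactly the same combinatorial input from \cref{lemma6.2}.
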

	\begin{proof}
		\normalsize{} Recall that for $1 \leq i \leq y_j-1,$ $t_{m,j}=t_{b_{i,j},j}$ for all $b_{i-1,j}+1 \leq m \leq b_{i,j}$ (see proof of \cref{lemma6.2}). Define $b_{0,j}=0.$ Then we have  \small{$$(\prod_{l=t_{b_{i,j},j}}^{t_{b_{i+1,j},j}-1}c^{b_{i,j}}_{(j-1)q+l,1})=\prod_{m=b_{i-1,j}+1}^{b_{i,j}}(\prod_{l=t_{m,j}}^{t_{b_{i+1,j},j}-1}c_{(j-1)q+l,1})(\prod_{l=t_{b_{i,j},j}}^{t_{b_{i+1,j},j}-1}c^{b_{i-1,j}}_{(j-1)q+l,1}).$$}
		
		\normalsize{Recall} that  $t_{m,j}=t_{b_{y_j,j},j}$ for all $b_{y_j-1,j}+1 \leq m \leq b_{y_j,j}$ (see proof of \cref{lemma6.2}). Hence, \small{$$(\prod_{l=t_{b_{y_j,j},j}}^{q}c^{b_{y_j,j}}_{(j-1)q+l,1})=\prod_{m=b_{y_j-1,j}+1}^{b_{y_j,j}}(\prod_{l=t_{m,j}}^{q}c_{(j-1)q+l,1})(\prod_{l=t_{b_{y_j,j},j}}^{q}c^{b_{y_j-1,j}}_{(j-1)q+l,1}).$$}
		
		\normalsize{Then} the left hand side of $(6.1)$ becomes 
		\tiny{\begin{align*}
				\begin{split}
					\prod_{i=1}^{y_j-1}\bigg(\prod_{m=b_{i-1,j}+1}^{b_{i,j}}(\prod_{l=t_{m,j}}^{t_{b_{i+1,j},j}-1}c_{(j-1)q+l,1})(\prod_{l=t_{b_{i,j},j}}^{t_{b_{i+1,j},j}-1}c^{b_{i-1,j}}_{(j-1)q+l,1})\bigg) \bigg(\prod_{m=b_{y_j-1,j}+1}^{b_{y_j,j}}(\prod_{l=t_{m,j}}^{q}c_{(j-1)q+l,1})(\prod_{l=t_{b_{y_j,j},j}}^{q}c^{b_{y_j-1,j}}_{(j-1)q+l,1})\bigg).\\
					\hspace{4cm}\normalsize{(6.2)}
				\end{split}
		\end{align*}}
		
		\normalsize{For} any increasing sequence $b_{1,j} < b_{2,j} < \cdots < b_{y_j,j}$  we claim that  \small{\begin{align*}
				\prod_{i=2}^{y_j-1}(\prod_{l=t_{b_{i,j},j}}^{t_{b_{i+1,j},j}-1}c^{b_{i-1,j}}_{(j-1)q+l,1})(\prod_{l=t_{b_{y_j,j},j}}^{q}c^{b_{y_j-1,j}}_{(j-1)q+l,1})=
				\prod_{i=2}^{y_j}(\prod_{l=t_{b_{i,j},j}}^{q}c^{b_{i-1,j}-b_{i-2,j}}_{(j-1)q+l,1}). \hspace{1cm}(6.3)
		\end{align*}}
		
		\normalsize{We} prove by the induction on the length of the sequence. 
		
		\normalsize{Note} that \small{$\displaystyle	\prod_{i=2}^{y_j-1}(\prod_{l=t_{b_{i,j},j}}^{t_{b_{i+1,j},j}-1}c^{b_{i-1,j}}_{(j-1)q+l,1})(\prod_{l=t_{b_{y_j,j},j}}^{q}c^{b_{y_j-1,j}}_{(j-1)q+l,1})$
			
			$\displaystyle =\big(\prod_{i=2}^{y_j-1}(\prod_{l=t_{b_{i,j},j}}^{t_{b_{i+1,j},j}-1}c^{b_{1,j}}_{(j-1)q+l,1})(\prod_{l=t_{b_{y_j,j},j}}^{q}c^{b_{1,j}}_{(j-1)q+l,1})\big)\big(\prod_{i=3}^{y_j-1}(\prod_{l=t_{b_{i,j},j}}^{t_{b_{i+1,j},j}-1}c^{b_{i-1,j}-b_{1,j}}_{(j-1)q+l,1})(\prod_{l=t_{b_{y_j,j},j}}^{q}c^{b_{y_j-1,j}-b_{1,j}}_{(j-1)q+l,1})\big)$
			
			$ =\big(\displaystyle\prod_{l=t_{b_{2,j},j}}^{q}c^{b_{1,j}}_{(j-1)q+l,1}\big)\big(\prod_{i=3}^{y_j-1}(\prod_{l=t_{b_{i,j},j}}^{t_{b_{i+1,j},j}-1}c^{b_{i-1,j}-b_{1,j}}_{(j-1)q+l,1})(\prod_{l=t_{b_{y_j,j},j}}^{q}c^{b_{y_j-1,j}-b_{1,j}}_{(j-1)q+l,1})\big).$}
		
		\normalsize{Since} $b_{2,j}-b_{1,j} < \cdots < b_{y_j,j}-b_{1,j},$ by induction on the length of the sequence, we have 
		
		\small{$\displaystyle\prod_{i=3}^{y_j-1}(\prod_{l=t_{b_{i,j},j}}^{t_{b_{i+1,j},j}-1}c^{b_{i-1,j}-b_{1,j}}_{(j-1)q+l,1})(\prod_{l=t_{b_{y_j,j},j}}^{q}c^{b_{y_j-1,j}-b_{1,j}}_{(j-1)q+l,1})=\prod_{i=3}^{y_j}(\prod_{l=t_{b_{i,j},j}}^{q}c^{b_{i-1,j}-b_{i-2,j}}_{(j-1)q+l,1}).$}
		
		\normalsize{Therefore,} {(6.2)} becomes \small{\begin{align*}
				\prod_{i=1}^{y_j-1}\bigg(\prod_{m=b_{i-1,j}+1}^{b_{i,j}}(\prod_{l=t_{m,j}}^{t_{b_{i+1,j},j}-1}c_{(j-1)q+l,1})\bigg) \bigg(\prod_{m=b_{y_j-1,j}+1}^{b_{y_j,j}}(\prod_{l=t_{m,j}}^{q}c_{(j-1)q+l,1})\bigg)\bigg(\prod_{i=2}^{y_j}(\prod_{l=t_{b_{i,j},j}}^{q}c^{b_{i-1,j}-b_{i-2,j}}_{(j-1)q+l,1})\bigg)
			\end{align*}
			\begin{align*}
				=\prod_{i=1}^{y_j-1}\bigg(\prod_{m=b_{i-1,j}+1}^{b_{i,j}}(\prod_{l=t_{m,j}}^{t_{b_{i+1,j},j}-1}c_{(j-1)q+l,1})\prod_{l=t_{b_{i+1,j},j}}^{q}c^{b_{i,j}-b_{i-1,j}}_{(j-1)q+l,1}\bigg) \bigg(\prod_{m=b_{y_j-1,j}+1}^{b_{y_j,j}}(\prod_{l=t_{m,j}}^{q}c_{(j-1)q+l,1})\bigg)
			\end{align*}
			\begin{align*}
				=\prod_{i=1}^{y_j-1}\bigg(\prod_{m=b_{i-1,j}+1}^{b_{i,j}}(\prod_{l=t_{m,j}}^{q}c_{(j-1)q+l,1})\bigg) \bigg(\prod_{m=b_{y_j-1,j}+1}^{b_{y_j,j}}(\prod_{l=t_{m,j}}^{q}c_{(j-1)q+l,1})\bigg)
				=\prod_{m=1}^{r-j}\bigg(\prod_{l=t_{m,j}}^{q}c_{(j-1)q+l,1}\bigg).
		\end{align*}}
	\end{proof}
	
	{\bf Acknowledgement.} The authors would like to thank the Infosys Foundation for the partial financial support. The first named author also would like to thank MATRICS grant for the partial financial support. This work was done during the second named author's stay at Chennai Mathematical Institute. The second named author would
	like to thank Chennai Mathematical Institute for the hospitality during her stay.

	\normalsize{}	
	
\end{document}